\documentclass[11pt,oneside,reqno]{amsart}
\usepackage{amsmath,amsthm,amsfonts,amssymb,bbm}
\usepackage[usenames,dvipsnames]{color}
\usepackage{graphicx,psfrag,subfigure,url}
\usepackage{enumitem}
\usepackage[noadjust]{cite}
\usepackage{mathrsfs,mathtools}
\usepackage[extension=pdf]{hyperref}
\usepackage{comment}
\usepackage[totalheight=9.2in,totalwidth=5.95in,centering]{geometry}
\usepackage{epstopdf}
\definecolor{darkblue}{rgb}{0.13,0.13,0.39}%{0.03,0.03,0.265}
\hypersetup{colorlinks=true,urlcolor=darkblue,citecolor=darkblue,linkcolor=darkblue,pdftitle={Determinantal
  line ensembles}}

%\mathtoolsset{showonlyrefs}

%\usepackage[color,notcite]{showkeys}
\usepackage{setspace}

\DeclareGraphicsRule{.tif}{png}{.png}{`convert #1 `basename #1.tif`.png}

%\usepackage{makeidx}
%\makeindex
\bibliographystyle{alpha}

\newcommand{\EE}{\ensuremath{\mathbb{E}}}

\newcommand{\PP}{\ensuremath{\mathbb{P}}}
\newcommand{\R}{\ensuremath{\mathbb{R}}}

\newcommand{\Z}{\ensuremath{\mathbb{Z}}}

\newcommand{\Y}{\ensuremath{\mathbb{Y}}}

\newcommand{\qand}{\quad\text{and}\quad}
\newcommand{\qqand}{\qquad\text{and}\qquad}

\newcommand{\I}{{\rm i}}
\newcommand{\pp}{\mathbb{P}}

\newcommand{\ee}{\mathbb{E}}
\newcommand{\rr}{\mathbb{R}}

\newcommand{\zz}{\mathbb{Z}}
\newcommand{\aip}{\mathrm{Airy}_2}
\newcommand{\aipo}{\mathrm{Airy}_1}
\newcommand{\Bt}{\mathrm{Airy}_{2\to 1}}
\newcommand{\prc}{\mathcal{P}}
\newcommand{\ch}{\mathcal{H}}

\newcommand{\cb}{\mathcal{B}}

\newcommand{\cm}{\mathcal{M}}
\newcommand{\cw}{\mathcal{W}}

\newcommand{\oQ}{\overline{Q}}

\newcommand{\uno}[1]{\mathbf{1}_{#1}}

\newcommand{\wt}{\widetilde}

\newcommand{\KAirytwo}{K_{2}}
\newcommand{\KAirytwoext}{K^{{\rm ext}}_{2}}

\newcommand{\KPrc}{K_{{\rm Prc}}}
\newcommand{\KPrcext}{K^{{\rm ext}}_{{\rm Prc}}}

\newcommand{\Zproc}{\mathcal{Z}}

\newcommand{\KZ}{K_{{z,z',\xi}}}
\newcommand{\KZext}{K^{{\rm ext}}_{{z,z',\xi}}}

\newcommand{\DZ}{D_{{z,z',\xi}}}

\newcommand{\Ktwoone}{K_{2\to 1}}

\newcommand{\Ktwooneext}{K^{{\rm ext}}_{2\to 1}}

\newcommand{\KHrm}{K_{{\rm GUE},N}}
\newcommand{\KHrmext}{K^{{\rm ext}}_{{\rm GUE},N}}
\newcommand{\widetildeKHrm}{\widetilde{K}_{{\rm GUE},N}}

\newcommand{\sW}{\mathsf{W}}
\newcommand{\sK}{\mathsf{K}}
\newcommand{\sU}{\mathsf{U}}
\newcommand{\sV}{\mathsf{V}}
\newcommand{\tsWo}{\mathsf{W_3}}
\newcommand{\tsWt}{\mathsf{W_4}}
\newcommand{\sWo}{\mathsf{W_1}}
\newcommand{\sWt}{\mathsf{W_2}}
\newcommand{\sQ}{\mathsf{Q}}

\newcommand{\sI}{\mathsf{I}}

\DeclareMathOperator{\Ai}{Ai}
\DeclareMathOperator{\tr}{tr}

\newcommand{\hs}{\hspace{0.05em}}

\newtheorem{theorem}{Theorem}[section]

\newtheorem{lemma}[theorem]{Lemma}
\newtheorem{proposition}[theorem]{Proposition}
\newtheorem{corollary}[theorem]{Corollary}

\theoremstyle{definition}
\newtheorem{remark}[theorem]{Remark}

\theoremstyle{definition}

\theoremstyle{definition}
\newtheorem{definition}[theorem]{Definition}

\theoremstyle{definition}

\newtheorem{assumption}{Assumption}
%\title{Macdonald processes\newline \large{Alexei Borodin, Ivan Corwin}}

\setlist{itemsep=3pt,topsep=2pt,leftmargin=6ex,label={\footnotesize\textbullet}}

\addtolength{\parskip}{3pt}

%\allowdisplaybreaks[3]

\title{Multiplicative functionals on ensembles of non-intersecting paths}

\author[A. Borodin]{Alexei Borodin}
\address{A. Borodin,
Massachusetts Institute of Technology,
Department of Mathematics,
77 Massachusetts Avenue, Cambridge, MA 02139-4307, USA
and
Institute for Information Transmission Problems,
Moscow, Russia}
\email{borodin@math.mit.edu}

\author[I. Corwin]{Ivan Corwin}
\address{I. Corwin,
Massachusetts Institute of Technology,
Department of Mathematics,
77 Massachusetts Avenue, Cambridge, MA 02139-4307, USA}
\email{ivan.corwin@gmail.com}

\author[D. Remenik]{Daniel Remenik} \address{D. Remenik, Departamento de Ingenier\'{i}a
  Matem\'{a}tica and Centro de Modelamiento Matem\'atico, Universidad de Chile, Av. Blanco
  Encalada 2120, Santiago, Chile} \email{dremenik@dim.uchile.cl}

\begin{document}
\sloppy
\maketitle
\thispagestyle{empty}

\begin{abstract}
  The purpose of this article is to develop a theory behind the occurrence of
  ``path-integral'' kernels in the study of extended determinantal point processes and
  non-intersecting line ensembles. Our first result shows how determinants involving such
  kernels arise naturally in studying ratios of partition functions and expectations of multiplicative functionals for ensembles of
  non-intersecting paths on weighted graphs. Our second result shows how Fredholm
  determinants with extended kernels (as arise in the study of extended determinantal
  point processes such as the Airy$_2$ process) are equal to Fredholm determinants with
  path-integral kernels. We also show how the second result applies to a number of
  examples including the stationary (GUE) Dyson Brownian motion, the Airy$_2$ process, the
  Pearcey process, the Airy$_1$ and Airy$_{2\to 1}$ processes, and Markov processes on
  partitions related to the $z$-measures.
\end{abstract}

\section{Introduction}

The Airy$_2$ process is a universal scaling limit of a wide variety of probabilistic
systems including random matrix theory, random growth processes, interacting particle
systems and directed polymers in random media (see \cite{PatrikReview,quastelRemReview}
and references therein).
%We will briefly define the probabilistic process and then recount two properties which appear rather mysterious given its definition. The first is the path-integral kernel formulation of the finite dimensional distributions of the process, and the second is the Brownian Gibbs property exhibited by the multi-layer extension to the Airy$_2$ process. The motivation for the results of this paper is to demystify the connection between these properties and place them into a broader context of extended determinantal point processes.
Denoted $\aip(\cdot)$, it is defined via its consistent finite dimensional distributions: for
$t_1<t_2<\cdots< t_n$,
\begin{equation}\label{e1}
\PP\!\left(\bigcap_{i=1}^{n} \left\{ \aip(t_i) \leq s_i\right\} \right) = \det(I-\chi \KAirytwoext)_{L^2(\{t_1,\ldots, t_n\}\times \R,\mu)}.
\end{equation}
Here $\chi$ is an operator which acts on functions $f\!:\{t_1,\ldots,t_n\}\times \R\to \R$ as
$$
\chi f(t_i,x) := \mathbf{1}_{x\geq s_i} f(t_i,x).
$$
The operator $\KAirytwoext$ acts as
$$
\KAirytwoext f(t_i,x) := \sum_{j=1}^{n} \int_{\R} dy\,\KAirytwoext(t_i,x;t_j,y) f(t_j,y)
$$
where $\KAirytwoext(s,x;t,y)$ is the ``extended'' Airy$_2$ kernel given by
$$
\KAirytwoext(s,x;t,y):=
\begin{cases}
\int_0^{\infty} d\lambda\,e^{-\lambda(s-t)} \Ai(x+\lambda)\Ai(y+\lambda) & \textrm{if } s\geq t,\\
-\int_{-\infty}^{0} d\lambda\,e^{-\lambda(s-t)} \Ai(x+\lambda)\Ai(y+\lambda) & \textrm{if } s<t,
\end{cases}
$$
with $\Ai(x)$ the classical Airy function. The right-hand side of (\ref{e1}) is the
Fredholm determinant of the identity minus a trace class operator (see Section
\ref{Fredsec} for definition and details) and the measure $\mu$ appearing there is the
product of counting measure on $\{t_1,\ldots, t_n\}$ and Lebesgue measure on $\R$.

%Having defined the process, we may now explore some of its properties.

The formula given in (\ref{e1}) for the finite dimensional distributions of the Airy$_2$
process becomes increasingly cumbersome as $n$ increases. This is due to the
$n$-dependence in the $L^2$ space on which the operators act. When taking a limit of a
sequence of operators, or their determinants, it is convenient to have the operators all
act on the same $L^2$ space, rather than a sequence of different spaces.

In Pr\"{a}hofer and Spohn's initial work on the Airy$_2$ process (see Section 5 of
\cite{PS} for $n=2$ or \cite{prolhacSpohn,CQR,quastelRemAiry1} for $n\geq 2$) the extended kernel formula
is shown to be equivalent to the following ``path-integral'' kernel formula:
\begin{equation}\label{e2}
\PP\!\left(\bigcap_{i=1}^{n} \left\{ \aip(t_i) \leq s_i\right\} \right) = \det(I-\KAirytwo +
\bar P_{s_1} e^{(t_1-t_2)H}\bar P_{s_2} \cdots e^{(t_{n-1}-t_n)H}\bar P_{s_n} e^{(t_n-t_1)H} \KAirytwo)_{L^2(\R)}.
\end{equation}
Here $\KAirytwo(x,y) =\KAirytwoext(0,x;0,y)$ is called the Airy$_2$ kernel, $\bar
P_{s}g(x) = \mathbf{1}_{x\leq s} g(x)$ is a projection operator and $H=-\Delta + x$ is
called the Airy Hamiltonian because $H\!\Ai(\cdot-s) = s\!\Ai(\cdot-s)$ ($\Delta$ is the
Laplacian on $\R$). The dependence on $n$ has been absorbed into the operator rather than
the $L^2$ space, and it is now plausible to take a large $n$ limit.

The reason we call this a path-integral kernel is because a portion of it can be written
in terms of the expectation of a certain path-integral. By the Feynman-Kac
formula (cf. \cite{Karatzas-Shreve}),
\begin{equation}
\bar P_{s_1} e^{(t_1-t_2)H}\bar P_{s_2} \cdots e^{(t_{n-1}-t_n)H}\bar P_{s_n}  f(x) = \EE_{b(t_1)=x}\!
\left[f(b(t_n)) e^{-\int_{t_1}^{t_n} b(s)\hs ds} \prod_{i=1}^{n} \mathbf{1}_{b(t_i)\leq s_{i}}\right],\label{eq:FKPs}
\end{equation}
where $b\!:[t_1,t_n]\to \R$ is the trajectory of a Brownian motion with diffusion
coefficient 2 starting at $b(t_1)=x$.%  Note that the order of $s_i$'s in the product inside the
% expectation has been reversed to account for the order in which the $\bar P_{s_i}$'s are
% applied on the left-hand side (alternatively, we could have applied the Feynman-Kac
% formula with a Brownian motion run backwards in time from $t_n$ to $t_1$).

Let $t_1<\cdots<t_n$ fill out the interval $[\ell,r]$ and let $s_i=h(t_i)$ for some function
$h\!:[\ell,r]\to~\R$. Then as $n$ goes to infinity, the above operator has a limit in trace
norm (see \cite{CQR} or Section \ref{Airy2Sec} below for details) given by
$$
\Gamma^{h}_{\ell,r}f(x) = \EE_{b(\ell)=x}\!\left[ f(b(r)) e^{-\int_{\ell}^{r}b(s)\hs ds}
  \mathbf{1}_{b\leq h}\right],
$$
where $\{b\leq h\}$ denotes the event $\{b(s)\leq h(s),\, \forall s\in [\ell,r]\}$. Thus,
it is shown in \cite{CQR} Theorem 2 or equation (\ref{continuumstatAiry}) below that
\begin{equation}
\PP\!\left(\aip(s) \leq h(s),\, \forall s\in [\ell,r]\right) = \det(I-\KAirytwo + \Gamma^h_{\ell,r} e^{(r-\ell)H} \KAirytwo)_{L^2(\R)}.\label{eq:cqr}
\end{equation}
The above formula proved useful in \cite{CQR} in providing a direct proof that the value
of the maximum of the Airy$_2$ process minus a parabola is distributed according to the
(GOE) Tracy-Widom distribution; and in \cite{MQR} in computing the joint distribution for
the value and location (in $t$) of the maximum. The two-time path-integral kernel formula
in \cite{PS} was utilized to compute asymptotics of the two-time covariance of the
Airy$_2$ process, since the extended kernel does not easily yield this. Note that the
left-hand side in the last formula presupposes the existence of a continuous version of
the Airy$_2$ process. This was first shown to exist in \cite{PNGJ}.

What is remarkable about formula \eqref{eq:cqr} is that the right-hand side is simple
(despite the cumbersome finite dimensional distributions given above) and the event in
question in the left-hand side has a clear translation into the operator
$\Gamma^h_{\ell,r}$. As a further application of the Feynman-Kac formula as well as the Cameron-Martin-Girsanov formula
(see \cite{CQR}), the integral kernel of $\Gamma^h_{\ell,r}$ can be expressed as
\[\Gamma^h_{\ell,r}(x,y)=\PP_{b(\ell)=x-\ell^2,b(r)=y-r^2}\!\left(b(s)\leq h(s)-s^2\text{ for
    all }s\in[\ell,r]\right),\] where $b$ is now a Brownian bridge run from
$x-\ell^2$ at time $\ell$ to $y-r^2$ at time $r$ (this means that $\Gamma^h_{\ell,r}f(x)=\int
dy\,\Gamma^h_{\ell,r}(x,y)f(y)$).  In other words, the probability that the Airy$_2$
process hits a function $h$ can be expressed as the Fredholm determinant of an operator
which is partly expressed by the probability that a Brownian bridge hits the same
function (minus a parabola).

We will now see how these formulas for the Airy$_2$ process are part of a more general
result.

\subsection{Extended kernels and the path-integral kernel in a general setting}\label{extendedsec}

There are many other examples of extended determinantal point processes (some given in
Section \ref{sec:examples}) and our aim is to find path-integral kernel formulas for these
other processes. This may have further applications, although we do not address them
here. For example, besides the previous work of \cite{prolhacSpohn}, in
\cite{quastelRemAiry1} a path-integral kernel formula was discovered for the Airy$_1$
process and used to prove existence of a continuous version of the process and its
H\"{o}lder regularity.

When the Airy$_2$ process was introduced, it arose as the top layer of the multi-layer
Airy$_2$ process, which will be denoted $\left\{\aip(i;t):i\in \Z_{\geq 1}, t\in \R\right\}$ and is such that $\aip(i;t)>\aip(j;t)$ for $i<j$. As $t$ varies, $\sum_{i=1}^{\infty} \delta_{\aip(i;t)}$ forms an $\R$-indexed collection of point processes which has the structure of an (extended) determinantal point process (see \cite{borDet} and references therein) with correlation kernel $\KAirytwoext(s,x;t,y)$.

This has the following consequence. For $t_1<\cdots< t_n$ fix $q_{t_i}\!:\R\to\R$ and let
$\bar{q}_{t_i} = 1-q_{t_i}$. For $g\!:\{t_1,\ldots, t_n\}\to \R$ define $q(g) := \prod_{i=1}^n q_{t_i}(g(t_i))$ and likewise define $\bar{q}(g)$. Then (given some
conditions on $q$ to ensure convergence -- see Section \ref{Airy2Sec} below)
\begin{equation}\label{e3}
\EE\!\left[\prod_{i=1}^{\infty} \bar{q}\big(\aip(i;\cdot)\big)\right] = \det(I-Q \KAirytwoext)_{L^2(\{t_1,\ldots, t_n\}\times \R,\mu)}
\end{equation}
where $Qf(t_i,x) := q_{t_i}(x) f(t_i,x)$. The left-hand side above is referred to here as
the expectation of a ``multiplicative functional'' of the multi-layer process. When
$q_{t_i}(x) = \mathbf{1}_{x> s_i}$ (and hence $\bar{q}_{t_i}(x) = \mathbf{1}_{x\leq
  s_i}$), the above formula reduces to (\ref{e1}) with $Q=\chi$.

Our first result shows that the type of identity between extended and path-integral kernel
Fredholm determinants which one gets by equating the right-hand sides of (\ref{e1}) and
(\ref{e2}) is quite general and dependent on a few structural properties of the
kernels.

In stating our results presently, we leave out a number of technical assumptions
(see Section \ref{equivsec} for these details) and assume that we have the following
collection of operators on functions $f\!:\{t_1,\ldots, t_n\}\to \R$:
\begin{itemize}
\item For each $1\leq i, j\leq n$, $\cw_{t_i,t_j}$ (with the convention $\cw_{t_i,t_i}=I$);
\item For each $1\leq i\leq n$, $K_{t_i}$;
\item A diagonal operator $Q$ such that $Qf(t_i,\cdot) :=Q_{t_i}f(t_i,\cdot)$ where for $1\leq i\leq n$ and $g:\R\to\R$, $Q_{t_i}g(x) := q_{t_i}(x)g(x)$.
\end{itemize}

\begin{theorem}[Theorem \ref{thm:extendedToBVP}, with technical assumptions suppressed]\label{thm1}
Assume that for all $1\leq i\leq j\leq k\leq n$ the following holds:
\begin{itemize}
\item {Right-invertibility}: $\cw_{t_i,t_j}\cw_{t_j,t_i}K_{t_i}=K_{t_i}$;
\item {Semigroup property}: $\cw_{t_i,t_j}\cw_{t_j,t_k}=\cw_{t_i,t_k}$;
\item {Reversibility relation}: $\cw_{t_i,t_j}K_{t_j}=K_{t_i}\cw_{t_i,t_j}$.
\end{itemize}
Then
\begin{equation*}
\det\!\big(I-QK^{\rm ext}\big)_{L^2(\{t_1,\dots,t_n\}\times \R,\mu)}
=\det\!\big(I-K_{t_1}+\oQ_{t_1}\cw_{t_1,t_2}\oQ_{t_2}\dotsm\cw_{t_{n-1},t_n}\oQ_{t_n}
    \cw_{t_n,t_1}K_{t_1}\big)_{L^2(\R)},
\end{equation*}
where
\begin{equation*}
K^{\rm ext}(t_i,x;t_j,y)=
  \begin{dcases*}
    \cw_{t_i,t_j}K_{t_j}(x,y) & if $i\geq j$,\\
    -\cw_{t_i,t_j}(I-K_{t_j})(x,y) & if $i<j$,
  \end{dcases*}
\end{equation*}
and $\oQ_{t_i}=I-Q_{t_i}$.
\end{theorem}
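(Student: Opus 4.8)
The plan is to prove the identity by first rewriting the extended kernel in a factored block form and then collapsing the $n$ time-slices onto a single copy of $L^2(\R)$ through a sequence of determinant-preserving reductions, each of which consumes exactly one of the three hypotheses. Throughout I view the operator on $L^2(\{t_1,\dots,t_n\}\times\R)$ as an $n\times n$ array of operators on $L^2(\R)$, and abbreviate $K_i=K_{t_i}$, $\cw_{i,j}=\cw_{t_i,t_j}$, $Q_i=Q_{t_i}$, $\oQ_i=\oQ_{t_i}$. Let $\mathbf K=\mathrm{diag}(K_1,\dots,K_n)$ be the block-diagonal array, $\mathcal W$ the array with $(i,j)$ block $\cw_{i,j}$, and $\mathcal W^{<}$ its strictly block-upper-triangular part.

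First I would record the elementary rewriting
\[
K^{\rm ext}(t_i,\cdot\,;t_j,\cdot)=\cw_{i,j}K_j-\mathbf 1_{i<j}\,\cw_{i,j},
\]
which is immediate from the two cases of the definition (for $i<j$ one has $-\cw_{i,j}(I-K_j)=-\cw_{i,j}+\cw_{i,j}K_j$). In block form this is $K^{\rm ext}=\mathcal W\mathbf K-\mathcal W^{<}$. I would then feed in the Reversibility relation: since $\cw_{i,j}K_j=K_i\cw_{i,j}$ for $i\le j$, the array $\mathbf K$ commutes with $\mathcal W^{<}$, and the strictly-upper blocks can equally be written $-(I-K_i)\cw_{i,j}$. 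This is the only place Reversibility is used directly, and its purpose is to make every $K_i$ conjugate to $K_1$ via the evolutions, which is ultimately what allows a single copy of $K_1$ to survive on the right-hand side.

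The core is then a reduction in the number of slices. I would eliminate block rows and columns one at a time, using the three relations rather than literal inverses: the Semigroup property fuses consecutive \emph{forward} evolutions, composing the short steps into the long arc $\cw_{1,n}=\cw_{1,2}\cdots\cw_{n-1,n}$; Reversibility transports the surviving $K$ to the base slice; and Right-invertibility, in the form $\cw_{1,n}\cw_{n,1}K_1=K_1$, closes the cycle — the single backward ``wrap'' $\cw_{n,1}$ is never composed with anything (two backward steps cannot be fused by the Semigroup property), and it is legitimized only because it eventually lands on the range of $K_1$, where $\cw_{1,n}\cw_{n,1}$ acts as the identity. Peeling off the slices in this way produces the cyclic word $\oQ_1\cw_{1,2}\oQ_2\dotsm\cw_{n-1,n}\oQ_n\cw_{n,1}$ one letter at a time, acting on $K_1$, with base case $n=1$ immediate: since $\cw_{1,1}=I$ the claimed right-hand side is $\det(I-K_1+\oQ_1K_1)=\det(I-Q_1K_1)$, which is the left-hand side. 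The bookkeeping is pinned down by the case $n=2$: expanding the claimed right-hand side and applying Right-invertibility to the term $(I-Q_1)\cw_{1,2}\cw_{2,1}K_1=(I-Q_1)K_1$ collapses it to $\det(I-Q_1K_1-\oQ_1\cw_{1,2}Q_2\cw_{2,1}K_1)$, which one checks equals the $2\times2$ block determinant directly; this fixes both the sign conventions and the placement of the $\oQ$'s for the general step.

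I expect the main obstacle to be non-invertibility. The operators $\cw_{i,j}$ are not assumed invertible — in the motivating examples they are unbounded and make sense only when flanked by kernels — so genuine Schur-complement or Gaussian-elimination steps, which would want inverses of the $\cw$'s or of $I-Q_nK_n$, are not literally available. The entire reduction must therefore be organized so that every manipulation uses only the three inverse-free relations, with Right-invertibility serving as the substitute for the missing inverse precisely on the ranges of the $K_i$, which is where all surviving terms live. A secondary, unavoidable task is to carry the suppressed technical hypotheses through the argument so that each array is of the form identity-plus-trace-class, the products entering the collapse are trace class, and the cyclic identity $\det(I-AB)=\det(I-BA)$ is valid at each step; this is where the conditions deferred to Section~\ref{equivsec} enter.
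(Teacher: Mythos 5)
Your setup and diagnosis match the paper's: the block rewriting $K^{\rm ext}=(\sW^-+\sW^+)\sK^{\rm d}-\sW^+$ (your $\mathcal{W}\mathbf{K}-\mathcal{W}^{<}$), your reading of what each of the three assumptions is for, and the $n=1,2$ checks are all correct, as is the warning that neither the $\cw_{i,j}$ nor $I-Q_nK_n$ can be inverted, so Schur complements are unavailable. But the engine of the proof is missing. You assert that block rows and columns can be ``eliminated one at a time'' using only the three inverse-free relations, and you never exhibit this elimination step; verifying $n\le 2$ does not supply it, and the cyclic wrap $\cw_{n,1}$ in the target formula is a concrete obstruction to any slice-by-slice induction: to absorb the last slice you would need to read $\cw_{n-1,n}\oQ_n\cw_{n,1}K_1$ as a backward kernel $\cw_{n-1,1}\widetilde K_1$ for some modified $\widetilde K_1$ satisfying the same three hypotheses, which it does not. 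As written, the ``peeling'' step has no content, and that step is precisely where the theorem lives.

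Moreover, your blanket prohibition on inverses rules out the device that actually makes the proof work. The paper never inverts a $\cw$, but it does literally invert the unipotent triangular arrays: $\sW^+$ and $\sQ\sW^+$ are strictly upper triangular, hence nilpotent, so $\sI+\sW^+$ and $\sI+\sQ\sW^+$ are invertible, and the semigroup property makes the first inverse explicit and bidiagonal, $[(\sI+\sW^+)^{-1}]_{i,j}=I\uno{j=i}-\cw_{i,i+1}\uno{j=i+1}$, as in \eqref{eq:IT+}. Conjugating $(\sW^-+\sW^+)\sK^{\rm d}$ by $\sI+\sW^+$ then annihilates every column except the first \emph{in a single stroke}, via the identity $\cw_{i,j-1}K_{j-1}\cw_{j-1,j}=\cw_{i,j}K_j$ (semigroup plus reversibility; see \eqref{eq:T-T+}), and since $\det(\sI+\sQ\sW^+)=1$, factoring it out and using the cyclic property of the determinant reduces the whole $n$-slice determinant to a single $(1,1)$ entry on $L^2(X)$ --- all slices at once, not one at a time. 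Even then one is not done: the $(1,1)$ entry emerges as an alternating sum over chains $j=a_0<a_1<\dots<a_k\le n$ of words $\cw_{i,j}Q_{j}\cw_{j,a_1}Q_{a_1}\dotsm Q_{a_k}\cw_{a_k,1}K_1$, and a separate combinatorial resummation (Lemma \ref{lem:alt}, proved by expanding $Q=I-\oQ$ and telescoping, not by right-invertibility as your $n=2$ calculation suggests) is required to recognize this sum as $\cw_{i,1}K_1-\oQ_i\cw_{i,2}\oQ_2\dotsm\oQ_n\cw_{n,1}K_1$. Neither the one-stroke column collapse nor this resummation is anticipated in your plan; supplying them (or a genuine substitute for them) is what your proposal still owes.
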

This is proved in Section \ref{Airy2Sec} essentially via linear algebra.

Letting $\cw_{t_i,t_j} = e^{-(t_j-t_i)H}$, $K_{t_i} = \KAirytwo$ and $Q_{s_i}=P_{s_i}$ we
recover the equality \eqref{e2} for the Airy$_2$ process. More generally, the result
implies that the expectation of a multiplicative functional of the multi-layer Airy$_2$
process \eqref{e3} can be expressed in a similar way, by replacing each $P_{s_i}$ by
$Q_{s_i}$ on the right-hand side of (\ref{e2}).

In Section \ref{sec:examples} we apply this theorem to a variety of examples of extended
determinantal point processes such as the stationary (GUE) Dyson Brownian motion, the
Airy$_2$ process, the Pearcey process, and Markov processes on partitions related to the
$z$-measures.  We also show how the identity applies to signed extended determinantal
point processes such as the Airy$_1$ and Airy$_{2\to 1}$ processes. In the case of the
stationary (GUE) Dyson Brownian motion and the Airy$_2$ process, we also obtain the
continuum limits of the corresponding path-integral kernel formulas (which is likely
doable in other cases as well).

\subsection{Ensembles of non-intersecting paths and the path-integral kernel}

The multi-layer Airy$_2$ process arises as the scaling limit of a variety of ensembles of
non-intersecting paths (see for instance \cite{KJPaths}). The occurrence of extended
kernel determinants in such ensembles is a consequence of the Eynard-Mehta theorem, which
implies the existence of an extended determinantal point process structure \cite{Eyn,Nag,PNGJ,tracyWidom-Diff,BR}. The equivalence of the extended kernel
determinant formula with the path-integral kernel determinant formula which is given in
Theorem \ref{thm1} (see also Theorem \ref{thm:extendedToBVP}) is via linear algebra, but
does not indicate why such a path-integral kernel formula exists. Theorem
\ref{intropaththm} below provides a direct link between ensembles of non-intersecting
paths and path-integral kernel determinant formulas, and its proof boils down to the
Lindstr\"{o}m-Gessel-Viennot Lemma (recorded below as Lemma \ref{LGV}). By first proving
the path-integral kernel determinant formula and then relating it to an extended kernel
determinant formula, this provides another proof of the extended determinant point process
structure for these ensembles (i.e., the Eynard-Mehta theorem), see Section \ref{sec:detstruc}.

Let us now introduce the ensembles of non-intersecting paths. Fix $T\in \Z_{\geq 0}$ and
$N\in \Z_{\geq 1}$. Let $G=(V,E)$ be a finite directed acyclic planar graph with vertex
set $V=V_0\sqcup V_1\sqcup\cdots \sqcup V_T$ (here $\sqcup$ represents the disjoint union
of sets) and directed edge set $E=E_{0\to 1}\sqcup E_{1\to 2} \sqcup \cdots \sqcup
E_{T-1\to T}$ where $E_{n\to n+1}$ only contains edges from $x\to y$ with $x\in V_n$ and
$y\in V_{n+1}$. Here $x\to y$ denotes an edge directed from $x$ to $y$.

As an example, let $V_n$ be vertices of $\Z^2$ of the form $(n,i)$ for $n-i \equiv 0 \mod
2$ (and $|i|\leq M$ for some $M$) and let $E_n$ contain all directed edges from $(x,n)$ to
$(x\pm 1,n+1)$. Paths in this directed graph are trajectories of simple symmetric random
walks (constrained to stay within distance $M$ from the origin), cf. Figure \ref{setup}.

We define a \emph{path} $\pi$ as a sequence of edges $\big(e_0=x_0\to x_1, e_1=x_1\to
x_2,\ldots, e_{T-1}=x_{T-1}\to x_T\big)$ where $x_i\in V_i$ for $0\leq i\leq T$. For such
a path, let $\pi(n)= x_n$ denote the $n^{\rm th}$ vertex in the path.

To edges $e\in E$ we associate weights $w_e\in \R$, and to a path $\pi$ we associate a
weight $w(\pi)$ given by the product of the weights $w(e_n)$ along the edges $e_n$ of
$\pi$. For $x\in V_0$ and $y\in V_T$ we define a transition matrix \[\mathcal{W}(x,y) :=
\sum_{\pi:x\to y} w(\pi)\] where the summation is over all paths $\pi$ from $x$ to $y$.
Instead of considering just a single path, we may consider ensembles of $N$
non-intersecting paths from elements of $V_0$ to elements of $V_T$ (by which we mean paths
which use disjoint collections of vertices). We define the collection of all such paths as
$$\mathcal{N\!.I.}(N) := \Big\{ \Pi = \{\pi_1,\ldots, \pi_N\}\!: \forall\, i,~ \pi_i \textrm{ goes from } V_0 \textrm{ to } V_T \textrm{ and no two paths intersect}\Big\}.$$

We will describe a measure on such an ensemble. This requires the introduction of two
additional families of functions. For $N$ fixed, consider functions $\psi_i\!:V_0\to \R$,
$1\leq i \leq N$, and $\varphi_j\!:V_T \to \R$, $1\leq j\leq N$. Define the weight of $\Pi\in
\mathcal{N\!.I.}(N)$ as
\begin{equation}
W\!t(\Pi) := \det[\psi_{i}(\pi_j(0))]_{i,j=1}^{N}  \left(\prod_{i=1}^{N} w(\pi_i)\right) \det[\varphi_{i}(\pi_j(T))]_{i,j=1}^{N},\label{eq:wtpi}
\end{equation}
and the partition function as
$$Z = \sum_{\Pi\in \mathcal{N\!.I.}(N)} W\!t(\Pi).$$
If $\psi_i$ and $\varphi_j$ are $\delta$-functions then the ends of the paths are fixed.

Assuming that $Z\neq 0$ we may define a measure (not necessarily positive but with total integral 1) on $\Pi\in \mathcal{N\!.I.}(N)$ as
$$\nu(\Pi) =\frac{W\!t(\Pi)}{Z}.$$
When each of the three factors on the right-hand side of \eqref{eq:wtpi} are positive (and
thus, in particular, $\nu$ is a probability measure), one may think of $\nu$ as
follows. The determinants $\det[\psi_{i}(\pi_j(0))]_{i,j=1}^{N}$ and
$\det[\phi_{i}(\pi_j(0))]_{i,j=1}^{N}$ define measures on the collections of $N$ initial
points in $V_0$ and $N$ final points in $V_T$. The weights $w(\pi_i)$ in the middle factor
in \eqref{eq:wtpi} describe the transition probabilities for $N$ independent paths
$(\pi_1,\dotsc,\pi_N)$ connecting these points. The measure $\nu$ is restricted to
non-intersecting paths, and the division by the normalizing constant $Z$ means that $\nu$
corresponds to a measure conditioned on the $N$ paths not intersecting.

Define $\varphi_j^{(0)}\!:V_0\to \R$ by $\varphi_j^{(0)}(x) := \sum_{y\in V_T} \mathcal{W}(x,y)\varphi_j(y)$. Note that this implies that $\mathcal{W}$ has a right-inverse on $\textrm{span}\{\varphi^{(0)}_i\}_{i=1}^N$ which is given by $\mathcal{W}^{-1} \varphi^{(0)}_{j} = \varphi_j$.

We will make the following {\it biorthogonality assumption} on the $\{\psi_i\}_{i=1}^{N}$
and $\{\varphi_j^{(0)}\}_{j=1}^{N}$:
$$ \sum_{x\in V_0} \psi_i(x) \varphi_j^{(0)}(x) = \uno{i=j}.$$

\begin{remark}
  Assuming $Z\neq 0$, one can show that it is always possible to perform a linear
  transformation in $\textrm{span}\{\psi_i\}_{i=1}^{N}$ in such a way that the
  biorthogonality assumption is satisfied and $\nu$ remains unchanged.
\end{remark}

The final concept we introduce is that of a {\it path-integral functional}, which is any function
$$f\!: E_{0\to 1}\times E_{1\to 2} \times \cdots \times E_{T-1\to T}\to \R$$
such that
$$f(e_0,e_1,\ldots, e_{T-1}) = \prod_{n=0}^{T-1} f_n(e_n)$$
for functions $f_n\!:E_{n\to n+1}\to \R$. This definition extends to a directed path $\pi$
from $V_0$ to $V_T$ by setting $f(\pi)$ equal to $f$ applied to the ordered sequence of
edges in $\pi$. From the function $f$ define a second set of edge weights $\tilde{w}_e:=
f_n(e) w_e$ were $n$ is such that $e\in E_{n\to n+1}$. With respect to these weights
$\{\tilde{w}_e\}_{e\in E}$ define a transition matrix \[\widetilde{\mathcal{W}}(x,y) =
\sum_{\pi:x\to y} \tilde{w}(\pi).\]

The following theorem is a consequence of Theorem \ref{combthm}, which is a similar result
for a more general graph setting. The theorem shows how the path-integral kernel
determinant naturally arises from ensembles of non-intersecting paths. A proof of the
below result appears in Section \ref{sec:non-inter}.

\begin{theorem}\label{intropaththm}
For any path-integral functional $f$ as above
\begin{equation*}
\sum_{\Pi=(\pi_1,\ldots, \pi_N)\in \mathcal{N\!.I.}(N)} \prod_{i=1}^{N} f(\pi_i)\nu(\Pi)  = \det(I - K + \widetilde{\mathcal{W}} \mathcal{W}^{-1} K)_{L^2(V_0)}
\end{equation*}
where $K\!:L^2(V_0)\to L^2(V_0)$ is given by its kernel
$$
K(x_1,x_2) = \sum_{i=1}^{N} \varphi^{(0)}_i(x_1) \psi_i(x_2).
$$
\end{theorem}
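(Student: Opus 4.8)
The plan is to evaluate the two sides independently and check that they reduce to the same $N\times N$ determinant, namely $\det\big[\sum_{x\in V_0}\psi_i(x)\widetilde\varphi^{(0)}_j(x)\big]_{i,j=1}^N$, where I write $\widetilde\varphi^{(0)}_j(x):=\sum_{y\in V_T}\widetilde{\mathcal{W}}(x,y)\varphi_j(y)$. On the left I would compute the (normalized) sum over non-intersecting ensembles using the Lindström--Gessel--Viennot lemma together with the Cauchy--Binet identity; on the right I would exploit the finite-rank structure of $K$. A useful preliminary is that $L^2(V_0)$ is finite dimensional, so the ``Fredholm'' determinant on the right is an ordinary determinant of $I$ minus a finite-rank operator.

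First I would handle the left-hand side. Since $G$ is planar and acyclic, a non-intersecting ensemble $\Pi$ determines ordered endpoints $x_1<\dots<x_N$ in $V_0$ and $y_1<\dots<y_N$ in $V_T$ (in the planar order) with the $k$-th path running from $x_k$ to $y_k$, and both $W\!t(\Pi)$ and $\prod_i f(\pi_i)$ are symmetric in the path labelling, so the sums are well defined on unordered ensembles. The key elementary observation is $f(\pi)w(\pi)=\prod_n f_n(e_n)w_{e_n}=\prod_n\tilde w_{e_n}=\tilde w(\pi)$, so replacing $w$ by $\tilde w$ turns $W\!t$ into the weight appearing in the numerator and $\mathcal{W}$ into $\widetilde{\mathcal{W}}$. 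Thus, summing the weights of non-intersecting paths with fixed endpoints by LGV (Lemma~\ref{LGV}) and writing $\mathrm{Num}:=\sum_{\Pi}\prod_i f(\pi_i)W\!t(\Pi)$,
\begin{equation*}
\mathrm{Num}=\sum_{\substack{x_1<\dots<x_N\\ y_1<\dots<y_N}}\det[\psi_i(x_j)]_{i,j}\,\det[\widetilde{\mathcal{W}}(x_i,y_j)]_{i,j}\,\det[\varphi_i(y_j)]_{i,j}.
\end{equation*}
Cauchy--Binet in the $y$-variables collapses the last two factors to $\det[\widetilde\varphi^{(0)}_j(x_i)]_{i,j}$, and a second Cauchy--Binet in the $x$-variables gives $\mathrm{Num}=\det\big[\sum_{x}\psi_i(x)\widetilde\varphi^{(0)}_j(x)\big]_{i,j}$. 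The identical computation with $f\equiv 1$ (so $\widetilde{\mathcal{W}}=\mathcal{W}$, $\widetilde\varphi^{(0)}_j=\varphi^{(0)}_j$) yields $Z=\det[\sum_x\psi_i(x)\varphi^{(0)}_j(x)]_{i,j}=\det[\uno{i=j}]=1$ by the biorthogonality assumption, so the left-hand side of the theorem equals $\mathrm{Num}$.

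Next I would evaluate the right-hand side. Viewing $K$ as the rank-$N$ operator $Kg=\sum_i\varphi^{(0)}_i\,\langle\psi_i,g\rangle$ on $L^2(V_0)$ with $\langle\psi_i,g\rangle=\sum_{x\in V_0}\psi_i(x)g(x)$, biorthogonality shows $K^2=K$, while $\mathcal{W}^{-1}\varphi^{(0)}_i=\varphi_i$ gives $\mathcal{W}^{-1}K=\sum_i\varphi_i\langle\psi_i,\cdot\rangle$ and hence $\widetilde{\mathcal{W}}\mathcal{W}^{-1}K=\sum_i\widetilde\varphi^{(0)}_i\langle\psi_i,\cdot\rangle$. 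Therefore
\begin{equation*}
I-K+\widetilde{\mathcal{W}}\mathcal{W}^{-1}K=I-\sum_{i=1}^N\big(\varphi^{(0)}_i-\widetilde\varphi^{(0)}_i\big)\langle\psi_i,\cdot\rangle=I-AB,
\end{equation*}
with $A$ the $|V_0|\times N$ matrix whose columns are $\varphi^{(0)}_i-\widetilde\varphi^{(0)}_i$ and $B$ the $N\times|V_0|$ matrix whose rows are $\psi_i$. The identity $\det(I-AB)=\det(I-BA)$ reduces this to $\det\big[\uno{i=j}-\sum_x\psi_i(x)(\varphi^{(0)}_j(x)-\widetilde\varphi^{(0)}_j(x))\big]_{i,j}$, and applying $\sum_x\psi_i(x)\varphi^{(0)}_j(x)=\uno{i=j}$ once more this equals $\det\big[\sum_x\psi_i(x)\widetilde\varphi^{(0)}_j(x)\big]_{i,j}=\mathrm{Num}$, which proves the theorem.

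The main obstacle is the first step: invoking LGV correctly, i.e. justifying that planarity forces the $k$-th path to connect $x_k$ to $y_k$ so that only the identity permutation survives in the signed Lindström--Gessel--Viennot sum, and checking that the two determinantal factors in $W\!t(\Pi)$ carry precisely the signs needed for the antisymmetry in the path labels to cancel between $\det[\psi_i(\pi_j(0))]$ and $\det[\varphi_i(\pi_j(T))]$ as one reorganizes the unordered sum into the ordered sum above. Once LGV is in place, the rest---two applications of Cauchy--Binet, the finite-rank reduction $\det(I-AB)=\det(I-BA)$, and the substitution of biorthogonality---is routine linear algebra; the only genuine analytic inputs are biorthogonality (which forces $Z=1$ and trivializes the diagonal of the reduced matrix) and the factorization $f(\pi)w(\pi)=\tilde w(\pi)$.
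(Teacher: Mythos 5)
Your proposal is correct, and its overall architecture --- evaluate the ratio $\widetilde{Z}/Z$ combinatorially, then match it against the determinant by finite-dimensional linear algebra --- is the same as the paper's, which deduces Theorem \ref{intropaththm} from the more general Theorem \ref{combthm} (arbitrary source/sink subsets $\mathcal{X},\mathcal{Y}$ rather than the layered sets $V_0,V_T$). Your left-hand-side computation is identical to the paper's: Lemma \ref{LGV} plus two applications of Cauchy--Binet give $\widetilde{Z}=\det\big[\sum_{x\in V_0}\psi_i(x)\widetilde{\varphi}^{(0)}_j(x)\big]_{i,j=1}^N$ and $Z=1$ by biorthogonality; moreover the ``main obstacle'' you flag is already discharged by the lemma as the paper cites it, since the signless form of LGV with fixed endpoint sets is precisely Lemma \ref{LGV} for planar acyclic graphs, and the symmetry of $W\!t(\Pi)$ under relabelling (the signs of the two antisymmetric determinantal factors cancel) is exactly what permits passing to sums over unordered endpoint subsets, as the paper also does tacitly. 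Where you genuinely diverge is in evaluating $\det(I-K+\widetilde{\mathcal{W}}\mathcal{W}^{-1}K)$: the paper writes the operator as a block matrix in the adapted basis $(\varphi^{(0)}_1,\dotsc,\varphi^{(0)}_N,(\mathrm{span}\{\psi_i\}_{i=1}^N)^{\perp})$ --- using that $K\varphi^{(0)}_j=\varphi^{(0)}_j$ and $K$ annihilates the orthogonal complement, so that the determinant reduces to $\det[A_{i,j}]_{i,j=1}^N$ with $A_{i,j}=\langle\widetilde{\mathcal{W}}\varphi_i,\psi_j\rangle$, after a separate argument that the lower-left block is irrelevant --- whereas you observe that $K-\widetilde{\mathcal{W}}\mathcal{W}^{-1}K=\sum_{i=1}^N\big(\varphi^{(0)}_i-\widetilde{\varphi}^{(0)}_i\big)\langle\psi_i,\cdot\rangle$ has rank $N$ and invoke the Sylvester identity $\det(I-AB)=\det(I-BA)$, after which biorthogonality cancels the diagonal. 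Your finish is slightly slicker: no choice of complementary basis, no block-triangularity discussion, and the $N\times N$ matrix emerges mechanically. The paper's basis computation, in exchange, makes visible the structural fact that $K$ is the biorthogonal projection onto $\mathrm{span}\{\varphi^{(0)}_i\}$ (so $K^2=K$), which is the property underlying the extended-kernel formalism exploited in the rest of the paper. Both arguments are complete and rigorous in this finite-dimensional setting.
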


As we will explain in the proof of Corollary \ref{cor1} below, the above result can also be seen
as a consequence of Theorem \ref{thm1} and the known determinantal structure for ensembles of non-intersecting paths. Instead, we provide a direct and simple linear algebraic proof of Theorem \ref{intropaththm} using only the Lindstr\"{o}m-Gessel-Viennot Lemma. This provides an explanation of the appearance of path-integral kernel formulas.

\subsubsection{Recovering the determinantal structure}\label{sec:detstruc}

As an application of Theorems \ref{thm1} and \ref{intropaththm}, let us see how to recover
the determinantal structure of the ensemble of non-intersecting paths
distributed according to the law $\nu$. We would like to show that for any collection of
vertices $\{x_1,\ldots, x_k\}\in V$, the $\nu$-measure of the set
$$\Big\{\Pi\in \mathcal{N\!.I.}(N): \textrm{ all of the } x_i \textrm{ are visited by paths in } \Pi\Big\}$$
can be written as $\det[K(x_i,x_j)]_{i,j=1}^{k}$ for some fixed matrix $K$ with rows and
columns indexed by the set of vertices $V$. This property can be seen as a consequence of
Corollary \ref{cor1} below which we show.

Consider any collection of functions $q_n\!:V_n\to \R$, $0\leq n \leq T-1$. Consider the
space of matrices with rows and columns indexed by $V$, and for notational convenience
denote $x\in V_n$ as $(n,x)$ so that matrix elements of a matrix $M$ are written as
$M(n,x;m,y)$. Define a matrix $Q$ so that $Qf(n,x) = q_n(x) f(n,x)$. For $m\leq n$ and
$x\in V_m$, $y\in V_n$ define $\mathcal{W}_{m,n}(x,y):= \sum_{\pi:x\to y} w(\pi)$ (for
$m=n$ let this be the identity matrix). For $x\in V_n$ define $\varphi^{(n)}_j(x):=
\sum_{y\in V_T} \mathcal{W}_{n,T}(x,y)\varphi_j(y)$. For $x_1,x_2\in V_n$ define
$K_n(x_1,x_2):= \sum_{i=1}^{N} \varphi^{(n)}_{i}(x_1)\psi_i(x_2)$. Note that for $m\leq
n$, $\mathcal{W}_{m,n}$ has a right-inverse on $\textrm{span}\{\varphi^{(m)}_i\}_{i=1}^N$
which is given by $\mathcal{W}_{m,n}^{-1} \varphi^{(m)}_{j} = \varphi^{(n)}_j$. We will
write this inverse as $\mathcal{W}_{n,m}$. On account of this we may define the following
(extended kernel) matrix
\begin{equation*}
K^{\rm ext}(m,x;n,y)=
  \begin{dcases*}
    \cw_{m,n}K_{n}(x,y) & if $m\geq n$,\\
    -\cw_{m,n}(I-K_{n})(x,y) & if $m<n$.
  \end{dcases*}
\end{equation*}

\begin{corollary}\label{cor1}
For any collection of functions $q_n\!:V_n\to \R$, $0\leq n \leq T-1$
\begin{equation}\label{emeq}
\sum_{\Pi=(\pi_1,\ldots, \pi_N)\in \mathcal{N\!.I.}(N)} \prod_{i=1}^{N} \prod_{n=0}^{T-1} \bar{q}_n(\pi_i(n))\nu(\Pi) = \det(I-QK^{\rm ext})_{L^2(V)}.
\end{equation}
where we recall that $\pi(n)$ denotes the vertex in $V_n$ through which $\pi$ passes and that $\bar{q}(x) = 1-q(x)$.
\end{corollary}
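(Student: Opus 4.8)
The plan is to realize both sides of \eqref{emeq} as one and the same path-integral determinant on $L^2(V_0)$, using Theorem~\ref{intropaththm} to rewrite the left-hand side and Theorem~\ref{thm1} to rewrite the right-hand side.

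For the left-hand side I would choose the path-integral functional $f$ whose factors read off $\bar q_n$ at the source vertex of each edge: for an edge $e\in E_{n\to n+1}$ from $x$ to $x'$ set $f_n(e):=\bar q_n(x)$. Then $f(\pi)=\prod_{n=0}^{T-1}\bar q_n(\pi(n))$, so $\prod_{i=1}^N f(\pi_i)$ is exactly the integrand on the left of \eqref{emeq}, and the reweighted transition matrix is $\widetilde{\mathcal W}(x,y)=\sum_{\pi:x\to y}\prod_{n=0}^{T-1}\bar q_n(\pi(n))\,w(e_n)$. Theorem~\ref{intropaththm} then gives the left-hand side as $\det\!\big(I-K_0+\widetilde{\mathcal W}\,\mathcal W^{-1}K_0\big)_{L^2(V_0)}$, where $K=K_0$, $\mathcal W=\mathcal W_{0,T}$ and $\mathcal W^{-1}=\mathcal W_{T,0}$.

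For the right-hand side I would apply Theorem~\ref{thm1} with times $0,1,\dots,T$, operators $\mathcal W_{m,n}$, kernels $K_n$, and the diagonal $Q$ (with the convention $\bar Q_T=I$, since $q_n$ is prescribed only for $n\le T-1$). This requires checking the three hypotheses: the semigroup property is the Chapman--Kolmogorov identity $\mathcal W_{m,n}\mathcal W_{n,p}=\mathcal W_{m,p}$; right-invertibility holds because $\mathcal W_{n,m}$ is defined as the right-inverse of $\mathcal W_{m,n}$ on $\mathrm{span}\{\varphi_i^{(m)}\}$ and $K_m$ has range in that span; and reversibility $\mathcal W_{m,n}K_n=K_m\mathcal W_{m,n}$ follows from the intertwining $\mathcal W_{m,n}\varphi_i^{(n)}=\varphi_i^{(m)}$ on the $\varphi$-factor of $K_n$ together with the forward evolution of the $\psi$-factor. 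Theorem~\ref{thm1} then turns $\det(I-QK^{\rm ext})_{L^2(V)}$ into $\det\!\big(I-K_0+\bar Q_0\mathcal W_{0,1}\bar Q_1\cdots\mathcal W_{T-1,T}\bar Q_T\,\mathcal W_{T,0}K_0\big)_{L^2(V_0)}$.

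It then remains to match the two path-integral kernels. Inserting the diagonal operators $\bar Q_n$ between the one-step transition matrices telescopes into a single reweighted transition matrix, $\bar Q_0\mathcal W_{0,1}\bar Q_1\cdots\mathcal W_{T-1,T}\bar Q_T=\widetilde{\mathcal W}$, since summing over paths $\pi$ from $V_0$ to $V_T$ attaches to each $\pi$ precisely the factor $\prod_{n=0}^{T-1}\bar q_n(\pi(n))$; together with $\mathcal W_{T,0}=\mathcal W^{-1}$ this shows the right-hand side of \eqref{emeq} equals $\det(I-K_0+\widetilde{\mathcal W}\,\mathcal W^{-1}K_0)_{L^2(V_0)}$, i.e.\ the expression obtained for the left-hand side. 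I expect the main obstacle to be the hypothesis check for Theorem~\ref{thm1}, and specifically the reversibility relation: it forces the $K_n$ to be built from the forward-evolved $\psi_i$ (so that the level-$0$ biorthogonality propagates to every level) and requires tracking that the operators act between the distinct spaces $L^2(V_m)$ and $L^2(V_n)$, so that one must invoke the general multi-space version (Theorem~\ref{thm:extendedToBVP}) rather than the single-space statement in Theorem~\ref{thm1}.
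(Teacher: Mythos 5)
Your proposal is correct and takes essentially the same route as the paper's proof: the paper likewise applies Theorem \ref{thm:extendedToBVP} (the multi-space version of Theorem \ref{thm1}, with $\oQ_T=I$ implicitly) to turn the right-hand side into a path-integral determinant on $L^2(V_0)$, applies Theorem \ref{intropaththm} with the source-vertex functional $f_n(e)=\bar q_n(x)$ to the left-hand side, and matches the two via the telescoping identity $\oQ_0\mathcal{W}_{0,1}\oQ_1\cdots\oQ_{T-1}\mathcal{W}_{T-1,T}=\widetilde{\mathcal{W}}_{0,T}$. Your closing remarks---that reversibility forces $K_n$ to be built from the forward-evolved $\psi_i$ and that the general multi-space theorem is needed---are precisely the (routine, since everything is finite-dimensional) hypothesis checks the paper invokes.
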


This result is essentially a version of the Eynard-Mehta Theorem.

\begin{proof}
  We use Theorem \ref{thm:extendedToBVP} (the more general version of Theorem \ref{thm1}). The technical
  assumptions are immediately satisfied since we are dealing with a finite vector
  space. The right-invertibility, semigroup property and reversibility relation are all
  readily checked from the definitions of the $\mathcal{W}_{m,n}$ and $K_{n}$. As a
  consequence of that theorem we find that we may rewrite the right-hand side of
  (\ref{emeq}) as
\begin{equation}\label{RHSeq}
  \textrm{RHS}(\ref{emeq}) = \det\!\big(I-K_0 + \oQ_{0}\mathcal{W}_{0,1}
  \oQ_1\mathcal{W}_{1,2}\cdots \oQ_{T-1} \mathcal{W}_{T-1,T} \mathcal{W}_{T,0} K_0\big)_{L^2(V_0)},
\end{equation}
where for $x\in V_n$, $\oQ_n f(n,x) := \bar{q}_n(x) f(n,x)$.

Define a path-integral functional $f$ so that for an edge $e\in E_{n\to n+1}$ from $x\in
E_n$ to $y\in E_{n+1}$, $f_n(e) = \bar{q}_n(x)$. As a consequence, for any path $\pi$ from
$V_0$ to $V_T$, $f(\pi) = \prod_{n=0}^{T-1} \bar{q}_n(\pi(n))$. This observation and
Theorem \ref{intropaththm} then imply that we may rewrite the left-hand side of
(\ref{emeq}) as
\begin{equation}\label{LHSeq}
\textrm{LHS}(\ref{emeq}) = \det\!\big(I - K_0 + \widetilde{\mathcal{W}}_{0,T} \mathcal{W}_{T,0} K_0\big)_{L^2(V_0)}.
\end{equation}
Note that we have introduced the subscripts on the right-hand side to be consistent with
the notation introduced before the statement of the corollary. Due to the specific type of
path-integral functional $f$, it is now straight-forward to see that
$$
\widetilde{\mathcal{W}}_{0,T} = \oQ_{0}\mathcal{W}_{0,1} \oQ_1\mathcal{W}_{1,2}\cdots \oQ_{T-1} \mathcal{W}_{T-1,T}.
$$
This implies that the right-hand sides of equations (\ref{RHSeq}) and (\ref{LHSeq}) match and therefore completes the proof of the corollary.
\end{proof}

\subsection{Outline}
In Section \ref{sec:non-inter} we prove a result about ensembles of non-intersecting paths
on directed graphs (which implies Theorem \ref{intropaththm} above). In Section
\ref{equivsec} we prove a general result (which implies Theorem \ref{thm1} above) showing the equality between certain extended
kernel and path-integral Fredholm determinants. In Section \ref{sec:examples} we apply
this equality between Fredholm determinants to a variety of extended kernels from the
literature, and in Appendix \ref{sec:app} we check the technical assumptions necessary in
order to do this.

\subsection{Acknowledgements}
We thank Senya Shlosman for interest in this project early on. AB was partially supported
by the NSF grant DMS-1056390. IC was partially supported by the NSF grant DMS-1208998 as
well as by the Clay Research Fellowship and by Microsoft Research through the Schramm
Memorial Fellowship. DR was partially supported by the Natural Science and Engineering
Research Council of Canada and by a Fields-Ontario Postdoctoral Fellowship, as well as by
Fondecyt Grant 1120309 and Conicyt Basal-CMM project. DR is appreciative for MIT's
hospitality during the visit in which this project was initiated.

\section{Non-intersecting directed paths on weighted graphs}
\label{sec:non-inter}

\subsection{A general combinatorial result}

Let $G=(V,E)$ be a finite directed acyclic planar graph with vertices $V$ and edges $E$. Fix source vertices $X=\{x_1,\ldots,x_N\}$ and sink vertices $Y=\{y_1,\ldots,y_N\}$. %Define a strict linear ordering on the elements of $X$ such that $x_1<x_2<\cdots<x_N$ and likewise define a strict linear ordering on elements of $Y$ such that $y_1<y_2<\cdots<y_N$.
Fix edge weights $w_{e}$ for each directed edge $e\in E$.

A directed path $\pi$ is a sequence of vertices connecting a vertex in $X$ to a vertex in $Y$ via directed edges in $E$. Denote the
source vertex of $\pi$ by $\pi(b)$ and the sink vertex of $\pi$ by $\pi(d)$\footnote{We use $b$ to denote the source, or base vertex; and $d$ to denote the sink, or destination vertex.}. We say that two paths {\it intersect} if their vertex sets have non-empty
intersection. To a directed path $\pi$ we associate a weight $w(\pi)$ which is given by
the product of $w_{e}$ over edges $e$ of $\pi$. Define
\begin{equation}
  \mathcal{W}(x,y) = \sum_{\pi:x\to y} w(\pi)\label{eq:defW}
\end{equation}
where the sum is over directed paths $\pi$ from source vertex $x$ to sink vertex $y$.

Define the ensemble of $N$ directed non-intersecting paths from the elements of $X$ to the
elements of $Y$ as
\begin{equation*}
  \mathcal{N\!.I.}(N;X\to Y) = \big\{ \{\pi_1,\ldots, \pi_N\}: \forall i,~  \pi_i(b)\in X,\, \pi_i(d)\in Y \textrm{ and no two paths intersect}\big\}.
\end{equation*}

Write $\Pi=\{\pi_1,\ldots,\pi_N\}$ for an element of $\mathcal{N.I.}(N;X\to Y)$. Note that
the non-intersection condition and the fact that $X$ and $Y$ have $N$ elements ensures
that $\{\pi_1(b),\ldots, \pi_N(b)\} = X$ and $\{\pi_1(d),\ldots, \pi_N(d)\} = Y$.

\begin{lemma}[\mbox{}\cite{KarMc,Ling,GV,Stembridge}]\label{LGV}
  Fix $N\geq 1$. For any finite directed acyclic planar graph $G$ with source vertices
  $X=\{x_1,\ldots,x_N\}$, sink vertices $Y=\{y_1,\ldots, y_N\}$ and edge weights $w_{e}$
  for each directed edge $e\in E$,
\begin{equation*}
\det\!\left[ \mathcal{W}(x_i,y_j)\right]_{i,j=1}^{N} = \sum_{\Pi \in \mathcal{N\!.I.}(N;X\to Y)} \prod_{i=1}^{N} w(\pi_i).
\end{equation*}
\end{lemma}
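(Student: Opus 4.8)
The plan is to prove this by the classical Lindstr\"om--Gessel--Viennot sign-reversing involution argument. First I would expand the determinant by the Leibniz formula and substitute the definition \eqref{eq:defW} of $\mathcal{W}$, obtaining
$$\det\!\left[\mathcal{W}(x_i,y_j)\right]_{i,j=1}^N=\sum_{\sigma\in S_N}\sgn(\sigma)\sum_{(\pi_1,\dots,\pi_N)}\prod_{i=1}^N w(\pi_i),$$
where the inner sum runs over all families of directed paths with $\pi_i\colon x_i\to y_{\sigma(i)}$. Thus the left-hand side is a signed sum over all path families, each family contributing $\sgn(\sigma)\prod_i w(\pi_i)$, with $\sigma$ the permutation recording which sink each source is joined to. I would then split this sum according to whether the family is intersecting or not.

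For the non-intersecting families the point is that planarity forces $\sigma=\mathrm{id}$. With the sources and sinks arranged so that their natural orderings are compatible (as is implicit in the remark preceding the lemma), any two paths $x_i\to y_{\sigma(i)}$ and $x_j\to y_{\sigma(j)}$ whose endpoints are out of order must share a vertex, by a standard Jordan-curve argument in the plane. Hence a non-intersecting family can only realize $\sigma=\mathrm{id}$, contributing $+\prod_i w(\pi_i)$, so the total contribution of these families is exactly $\sum_{\Pi\in\mathcal{N\!.I.}(N;X\to Y)}\prod_{i=1}^N w(\pi_i)$, the desired right-hand side.

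It then remains to cancel the intersecting families via a weight-preserving, sign-reversing involution $\Phi$. Since $G$ is acyclic I would fix a topological ordering of $V$; given an intersecting family, let $v$ be the first vertex in this order lying on at least two of the paths, and let $i<j$ be the two smallest indices of paths through $v$. Define $\Phi$ by swapping the tails of $\pi_i$ and $\pi_j$ beyond $v$. This leaves the overall multiset of edges unchanged, so $\prod_i w(\pi_i)$ is preserved, while the connectivity permutation is multiplied by the transposition $(i\,j)$, flipping $\sgn(\sigma)$. The crucial verification is that $\Phi$ is an involution: every vertex strictly earlier than $v$ in the topological order has the same path-memberships before and after the swap (each path visits vertices in increasing topological order, so the segments up to $v$ are untouched), whence $v$ remains the first multiply-covered vertex and $i,j$ remain the two smallest indices through it; swapping again restores the original family.

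The main obstacle I anticipate is precisely this self-inverse check, i.e. making the selection rule for $(v,i,j)$ canonical enough that $\Phi$ is well-defined and $\Phi\circ\Phi=\mathrm{id}$ on every intersecting family; the bookkeeping is routine but must be carried out consistently, and the planarity step for the non-intersecting case is a short topological argument. Since the intersecting terms then cancel in $\Phi$-pairs of equal weight and opposite sign, only the non-intersecting identity-permutation families survive, yielding the claimed identity.
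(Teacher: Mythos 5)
The paper does not actually prove Lemma \ref{LGV}: it is quoted as a known result with citations to \cite{KarMc,Ling,GV,Stembridge}, so there is no internal proof to compare against, and your proposal is precisely the classical sign-reversing involution argument from those references. Your argument is correct, and the one genuinely delicate point is handled properly: choosing $v$ as the first multiply-covered vertex in a fixed topological order, with $i<j$ the two smallest indices of paths through $v$, is a canonical selection that survives the tail swap, because every vertex on a swapped tail lies strictly after $v$ in the topological order, so the path-memberships of all vertices up to and including $v$ are unchanged and $\Phi\circ\Phi=\mathrm{id}$; weight preservation follows since the edge multiset is untouched, the transposition $(i\,j)$ flips the sign, and planarity together with the compatible boundary ordering of $X$ and $Y$ (implicit in the paper's setup, cf.\ Figure \ref{setup}) forces $\sigma=\mathrm{id}$ for non-intersecting families via the Jordan-curve argument you describe.
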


\begin{figure}
\begin{center}
\includegraphics[width=.8\textwidth]{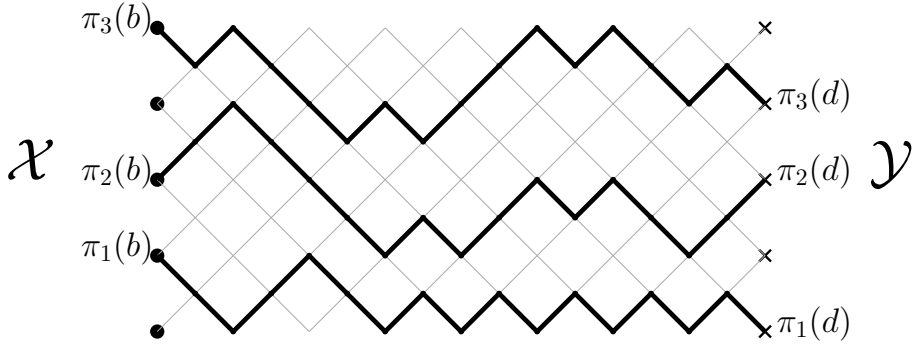}%\includegraphics[width=.8\textwidth,height=.35\textwidth]{setup}
\end{center}
\caption{An example of a graph $G$ with source vertices $\mathcal{X}$ on the left-hand side and sink vertices $\mathcal{Y}$ on the right-hand side. Here there are $N=3$ non-intersecting paths which are shown in grey, with starting points $\pi_i(b)$ and ending points $\pi_i(d)$ for $i=1,2,3$.}\label{setup}
\end{figure}

Consider now finite sets of source vertices $\mathcal{X}\subset V$ and sink vertices
$\mathcal{Y}\subset V$ (with at least $N$ vertices in each of $\mathcal{X}$ and
$\mathcal{Y}$). For such $\mathcal{X}$ and $\mathcal{Y}$ we can likewise define the
ensemble of $N$ directed non-intersecting paths from elements of $\mathcal{X}$ to elements
of $\mathcal{Y}$. Denote this ensemble $\mathcal{N\!.I.}(N;\mathcal{X}\to\mathcal{Y})$.

Fix functions $\psi_i\!:\mathcal{X}\to \R$ for $1\leq i\leq N$ and functions
$\varphi_j\!:\mathcal{Y}\to \R$ for $1\leq j\leq N$. For $\Pi=\{\pi_1,\ldots, \pi_N\}$
with source vertices $\{\pi_1(b),\ldots,\pi_N(b)\}\subset \mathcal{X}$ and sink vertices
$\{\pi_1(d),\ldots,\pi_N(d)\}\subset \mathcal{Y}$ we define the weight of $\Pi$ as
\begin{equation*}
  W\!t(\Pi) := \det[\psi_{i}(\pi_j(b))]_{i,j=1}^{N}  \left(\prod_{i=1}^{N} w(\pi_i)\right) \det[\varphi_{i}(\pi_j(d))]_{i,j=1}^{N}.
\end{equation*}
Define a partition function for directed non-intersecting ensembles of $N$ paths from
$\mathcal{X}$ to $\mathcal{Y}$ with respect to weights $\{w_e\}_{e\in E}$ and functions
$\{\psi_i\}_{i=1}^{N}$, $\{\varphi_j\}_{j=1}^{N}$ as
\begin{equation*}
  Z = Z\left(\mathcal{X},\mathcal{Y},\{w_e\}_{e\in E}, \{\psi_i\}_{i=1}^{N},\{\varphi_j\}_{j=1}^{N}\right) := \sum_{\Pi\in \mathcal{N\!.I.}(N;\mathcal{X}\to \mathcal{Y})} W\!t(\Pi).
\end{equation*}
% \note{is it clear that for $\mathcal{X}$ and $\mathcal{Y}$ with more than $N$ elements
% this set of path $\mathcal{N.I.}(N;\mathcal{X}\to \mathcal{Y})$ involves subsets of the
% source and sink vertices?}

For $1\leq j\leq N$ define $\varphi_j^{(b)}\!:\mathcal{X}\to \R$ by
\begin{equation}\label{varphib}
  \varphi_j^{(b)}(x):= \sum_{y\in \mathcal{Y}} \mathcal{W}(x,y) \varphi_j(y)
\end{equation}
and further define the operator $K\!:L^2(\mathcal{X})\to L^2(\mathcal{X})$ by its kernel
(which is just an $\mathcal X$-indexed matrix)
\begin{equation}\label{eq:defK}
  K(x_1,x_2) = \sum_{i=1}^{N} \varphi_i^{(b)}(x_1)\psi_i(x_2).
\end{equation}
Observe that $\mathcal{W}$ has a right-inverse on
$\textrm{span}\{\varphi^{(b)}_i\}_{i=1}^N$, which we will denote by $\mathcal{W}^{-1}$,
given by
\[\mathcal{W}^{-1}\varphi_j^{(b)}=\varphi_j.\]
In particular, since the range of $K$ is contained in
$\textrm{span}\{\varphi^{(b)}_i\}_{i=1}^N$, $\mathcal{W}^{-1}K$ is well defined as an
operator mapping $L^2(\mathcal{X})$ to $L^2(\mathcal{Y})$ :
\begin{equation}
  \label{eq:defWinvK}
  \mathcal{W}^{-1}Kf=\sum_{i=1}^N\left\langle\psi_i,f\right\rangle_{L^2(\mathcal{X})}\varphi_i,
\end{equation}
where $\left\langle\cdot,\cdot\right\rangle_{L^2(\mathcal{X})}$ is the inner product in
$L^2(\mathcal{X})$.

We say that the {\it biorthogonality assumption} is satisfied if
% , with respect to the inner product on $L^2(\mathcal{X})$,
\begin{equation*}
  \left\langle \psi_i,\varphi_j^{(b)}\right\rangle_{L^2(\mathcal{X})} = \uno{i=j} \qquad \textrm{for all } 1\leq i,j\leq N.
\end{equation*}

\begin{theorem}\label{combthm}
  Let $G=(V,E)$ be a finite directed acyclic planar graph. Fix sets of source vertices
  $\mathcal{X}\subset V$ and sink vertices $\mathcal{Y}\subset V$. Fix edge weights
  $w_{e}$ and a second set of weights $\tilde{w}_e$ for each directed edge $e\in E$. Fix
  functions $\psi_i\!:\mathcal{X}\to \R$ for $1\leq i\leq N$ and functions
  $\varphi_j\!:\mathcal{Y}\to \R$ for $1\leq j\leq N$ which satisfy the biorthogonality
  assumption with $\varphi_j^{(b)}$ defined via the $w_{e}$ weights. Write
  \[Z=Z\!\left(\mathcal{X},\mathcal{Y},\{w_e\}_{e\in E},
    \{\psi_i\}_{i=1}^{N},\{\varphi_j\}_{j=1}^{N}\right)\qand
  \widetilde{Z}~=~Z\!\left(\mathcal{X},\mathcal{Y},\{\tilde{w}_e\}_{e\in E},
    \{\psi_i\}_{i=1}^{N},\{\varphi_j\}_{j=1}^{N}\right).\] Then
  \begin{equation*}
    \frac{\widetilde{Z}}{Z} = \det(I-K + \widetilde{\mathcal{W}} \mathcal{W}^{-1} K)_{L^2(\mathcal{X})},
  \end{equation*}
  where $\widetilde{\mathcal{W}}\!:L^2(\mathcal{Y})\to L^2(\mathcal{X})$ is given by
  \eqref{eq:defW} with $w$ replaced by $\tilde w$, and $\mathcal{W}^{-1}K\!:L^2(\mathcal{X})\to
  L^2(\mathcal{Y})$ is defined in \eqref{eq:defWinvK}.
\end{theorem}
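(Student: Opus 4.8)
The plan is to evaluate both partition functions $Z$ and $\widetilde Z$ explicitly as $N\times N$ determinants and then recognize the right-hand side as the same object. The mechanism is the Lindstr\"om--Gessel--Viennot identity (Lemma \ref{LGV}) to turn sums over non-intersecting path ensembles into determinants, followed by two applications of the Cauchy--Binet identity to collapse the sums over source and sink configurations, and finally the Weinstein--Aronszajn (Sylvester) determinant identity to handle the finite-rank operator appearing on the right.

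First I would organize the sum defining $Z$ according to the set of source vertices $A=\{\pi_1(b),\dots,\pi_N(b)\}\subset\mathcal{X}$ and sink vertices $B=\{\pi_1(d),\dots,\pi_N(d)\}\subset\mathcal{Y}$ used by an ensemble $\Pi$, both listed in the induced planar order. Since the two outer determinants in $W\!t(\Pi)$ depend only on $A$ and $B$, Lemma \ref{LGV} applied to the pair $(A,B)$ rewrites the inner sum over non-intersecting paths as $\det[\mathcal{W}(a_i,b_j)]_{i,j=1}^N$ (planarity forcing the trivial matching), giving
\[
Z=\sum_{A,B}\det[\psi_i(a_j)]_{i,j}\,\det[\mathcal{W}(a_i,b_j)]_{i,j}\,\det[\varphi_i(b_j)]_{i,j}.
\]
Summing over $B$ by Cauchy--Binet collapses $\det[\mathcal{W}(a_i,b_j)]\det[\varphi_i(b_j)]$ into $\det[\varphi_j^{(b)}(a_i)]_{i,j}$ via the definition \eqref{varphib}; summing over $A$ by Cauchy--Binet again yields $Z=\det\big[\langle\psi_i,\varphi_j^{(b)}\rangle_{L^2(\mathcal{X})}\big]_{i,j=1}^N$, which equals $1$ by the biorthogonality assumption. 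The identical computation with $w$ replaced by $\tilde w$ gives $\widetilde Z=\det\big[\langle\psi_i,\widetilde\varphi_j^{(b)}\rangle_{L^2(\mathcal{X})}\big]_{i,j=1}^N$, where $\widetilde\varphi_j^{(b)}(x)=\sum_y\widetilde{\mathcal{W}}(x,y)\varphi_j(y)$; here biorthogonality no longer simplifies the matrix. Hence $\widetilde Z/Z=\det[\langle\psi_i,\widetilde\varphi_j^{(b)}\rangle]$.

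It remains to identify the right-hand Fredholm determinant with this $N\times N$ determinant. Unwinding \eqref{eq:defK} and \eqref{eq:defWinvK}, both $K$ and $\widetilde{\mathcal{W}}\mathcal{W}^{-1}K$ are finite-rank on $L^2(\mathcal{X})$: one checks $Kf=\sum_i\langle\psi_i,f\rangle\varphi_i^{(b)}$ and $\widetilde{\mathcal{W}}\mathcal{W}^{-1}Kf=\sum_i\langle\psi_i,f\rangle\widetilde\varphi_i^{(b)}$, so that $(-K+\widetilde{\mathcal{W}}\mathcal{W}^{-1}K)f=\sum_i\langle\psi_i,f\rangle\big(\widetilde\varphi_i^{(b)}-\varphi_i^{(b)}\big)$. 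Writing this operator as $\sum_i u_i\otimes\psi_i$ (with the convention $(u\otimes v)f=\langle v,f\rangle u$) where $u_i=\widetilde\varphi_i^{(b)}-\varphi_i^{(b)}$, and applying the identity $\det\big(I+\sum_i u_i\otimes v_i\big)=\det[\delta_{ij}+\langle v_i,u_j\rangle]_{i,j=1}^N$ gives
\[
\det\big(I-K+\widetilde{\mathcal{W}}\mathcal{W}^{-1}K\big)=\det\big[\delta_{ij}+\langle\psi_i,\widetilde\varphi_j^{(b)}-\varphi_j^{(b)}\rangle\big]_{i,j=1}^N=\det\big[\langle\psi_i,\widetilde\varphi_j^{(b)}\rangle\big]_{i,j=1}^N,
\]
the last step again using $\langle\psi_i,\varphi_j^{(b)}\rangle=\delta_{ij}$ to cancel the correction. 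This matches $\widetilde Z/Z$ and finishes the proof.

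The computation is essentially bookkeeping, and the step requiring the most care is the LGV reduction: one must confirm that on a planar acyclic graph with sources and sinks listed in their induced order the only permutation contributing non-intersecting ensembles is the identity, so that the signed determinant $\det[\mathcal{W}(a_i,b_j)]$ genuinely reduces to the sum over $\mathcal{N\!.I.}(N;A\to B)$, and that the index matching between the $\psi$- and $\varphi$-determinants and the path ordering stays consistent through both Cauchy--Binet contractions. The finite-rank determinant identity and the two Cauchy--Binet steps are then routine.
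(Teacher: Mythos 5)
Your proposal is correct, and its combinatorial core coincides with the paper's: both organize $Z$ and $\widetilde Z$ by the source/sink subsets, apply Lemma \ref{LGV} to the inner sum, contract with two applications of Cauchy--Binet, and use biorthogonality to conclude $Z=1$, reducing everything to showing $\det(I-K+\widetilde{\mathcal W}\mathcal{W}^{-1}K)=\det\big[\langle\psi_i,\widetilde\varphi_j^{(b)}\rangle\big]_{i,j=1}^N$. The only genuine divergence is in this last linear-algebra step. The paper writes the operator as a $2\times2$ block matrix in the basis $(\varphi_1^{(b)},\dots,\varphi_N^{(b)},(\mathrm{span}\{\psi_i\}_{i=1}^N)^{\bot})$, using biorthogonality to show $K\varphi_j^{(b)}=\varphi_j^{(b)}$ so that $I-K+\widetilde{\mathcal W}\mathcal{W}^{-1}K$ becomes $\left(\begin{smallmatrix}A&0\\ *&I\end{smallmatrix}\right)$ with $A_{i,j}=\langle\widetilde{\mathcal W}\varphi_i,\psi_j\rangle$. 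You instead observe that $-K+\widetilde{\mathcal W}\mathcal{W}^{-1}K=\sum_i\big(\widetilde\varphi_i^{(b)}-\varphi_i^{(b)}\big)\otimes\psi_i$ is a single rank-$N$ operator and invoke the Weinstein--Aronszajn identity $\det\big(I+\sum_iu_i\otimes v_i\big)=\det[\delta_{ij}+\langle v_i,u_j\rangle]$, deploying biorthogonality a second time to cancel the $\delta_{ij}$; your computations of $Kf$ and $\widetilde{\mathcal W}\mathcal{W}^{-1}Kf$ from \eqref{eq:defK} and \eqref{eq:defWinvK} are exactly right, and the identity requires no independence hypotheses, so the step is sound. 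Your route is marginally slicker --- it avoids choosing a basis of the orthogonal complement and absorbs the block bookkeeping into a standard identity --- while the paper's explicit block computation is self-contained (indeed, it is essentially a proof of the finite-rank identity you cite), so the difference is one of packaging rather than substance. Your closing caveat about the order-preserving matching in the LGV step and sign consistency through the Cauchy--Binet contractions is well placed: the paper glosses over exactly this point (its intermediate display even carries a spurious-looking $N!$ that disappears in the subset-summation convention of Cauchy--Binet), and your cleaner accounting of it is an improvement.
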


\begin{remark}
  % The biorthogonality assumption can be replaced by the assumption that $Z\neq 0$. It
  % then
  % follows that there exists a linear change of basis in the space of the
  % $\{\psi_i\}_{i=1}^{N}$ (or equally well the space of the $\{\varphi_j\}_{j=1}^{N}$)
  % which leads back to the biorthogonality assumption being satisfied. \dnote{Ivan: is
  % this
  % not equivalent in a sense?}
  As will be clear from the proof, the biorthogonality assumption implies that
  $Z\neq0$. Conversely, one can show that if $Z\neq0$ then there exists a linear change of
  basis in the space spanned by $\{\psi_i\}_{i=1}^{N}$ (or equally well the space spanned by
  $\{\varphi_j\}_{j=1}^{N}$) which leads back to the biorthogonality assumption being
  satisfied and does not change the ratio $\widetilde{Z}/Z$.
\end{remark}

Before turning to the proof of the theorem, let us check that it implies Theorem \ref{intropaththm}.

\begin{proof}[Proof of Theorem \ref{intropaththm}]
Recall that for the path-integral functional $f$, we defined a set of weights $\tilde{w}_e
= f_{n}(e) w_{e}$ where $e\in E_{n\to n+1}$. Let $Z=Z\!\left(V_0,V_T,\{w_e\}_{e\in
    E}, \{\psi_i\}_{i=1}^{N},\{\varphi_j\}_{j=1}^{N}\right)$ and $\widetilde
Z=Z\!\left(V_0,V_T,\{\tilde w_e\}_{e\in E}, \{\psi_i\}_{i=1}^{N},\{\varphi_j\}_{j=1}^{N}\right)$.
We claim that
\begin{equation*}
  \int\limits_{\Pi\in\mathcal{N\!.I.}(N;V_0\to
    V_T)}\!\!\!\!d\nu(\Pi)\,\prod_{i=1}^{N} f(\pi_i)  = \frac{\widetilde Z}{Z} = \det(I-K + \widetilde{\mathcal{W}} \mathcal{W}^{-1} K)_{L^2(V_0)}.
\end{equation*}
The second equality is an immediate corollary of Theorem \ref{combthm}. To see the first equality above observe that
\begin{eqnarray*}
  \frac{\widetilde{Z}}{Z} &=&  \sum_{\Pi\in \mathcal{N\!.I.}(N;V_0\to V_T)} \prod_{i=1}^{N}\prod_{n=0}^{T-1} f_n(\pi_i(n)\to \pi_i(n+1))\frac{W\!t(\Pi)}{Z} \\
  &=&\qquad\smashoperator{\int\limits_{\Pi\in\mathcal{N\!.I.}(N;V_0\to V_T)}}\quad
  d\nu(\Pi)\,\prod_{i=1}^{N}\prod_{n=0}^{T-1} f_n(\pi_i(n)\to \pi_i(n+1))
  =\qquad\smashoperator{\int\limits_{\Pi\in\mathcal{N\!.I.}(N;V_0\to V_T)}}\quad d\nu(\Pi) \, \prod_{i=1}^{N} f(\pi_i)
\end{eqnarray*}
as desired.
\end{proof}

\begin{proof}[Proof of Theorem \ref{combthm}]
  The proof is linear algebra. We may rewrite $\widetilde{Z}$ by first summing over the
  subsets of $\mathcal{X}$ and $\mathcal{Y}$ which host the source and sink vertices,
  and then considering all non-intersecting paths between these sets. Thus
\begin{align*}
\widetilde{Z} &=
\quad\qquad\smashoperator{\sum_{\substack{X=\{x_1,x_2,\cdots,x_N\}\subset
          \mathcal{X}\\Y=\{y_1,y_2,\cdots,y_N\}\subset \mathcal{Y}}}}\qquad
    \det[\psi_{i}(x_j)]_{i,j=1}^{N} \left(\sum_{\Pi \in \mathcal{N\!.I.}(N;X\to Y)}
 \prod_{i=1}^{N} \tilde{w}(\pi_i)\right) \det[\varphi_{i}(y_j)]_{i,j=1}^{N}\\
    &= \quad\qquad\smashoperator{ \sum_{\substack{\{x_1,x_2,\cdots,x_N\}\subset
          \mathcal{X}\\\{y_1,y_2,\cdots,y_N\}\subset \mathcal{Y}}}}\qquad
    \det[\psi_{i}(x_j)]_{i,j=1}^{N} \left(N!
      \det\!\left[\widetilde{\mathcal{W}}(x_i,y_j)\right]_{i,j=1}^{N} \right)
    \det[\varphi_{i}(y_j)]_{i,j=1}^{N}.
  \end{align*}
The second line follows by an application of Lemma \ref{LGV}.

We may now apply the Cauchy-Binet identity twice. The first application is with respect to the summation in the $x$'s, and it yields
  \begin{equation*}
    \sum_{\{x_1,x_2,\cdots,x_N\}\subset \mathcal{X}}  \det[\psi_{i}(x_j)]_{i,j=1}^{N} \det\!\left[\widetilde{\mathcal{W}}(x_i,y_j)\right]_{i,j=1}^{N} = \det\!\left[ \sum_{x\in\mathcal{X}}   \psi_{i}(x)\widetilde{\mathcal{W}}(x,y_j) \right]_{i,j=1}^{N}.
  \end{equation*}
  The second application likewise is applied to the summation in $y$'s and yields that
  \begin{equation}\label{eq:wtZ}
    \widetilde{Z} = \det\!\left[ \sum_{\substack{x\in \mathcal{X}\\ y\in \mathcal{Y}}} \psi_{i}(x)\widetilde{\mathcal{W}}(x,y) \varphi_{j}(y) \right]_{i,j=1}^{N}.
  \end{equation}

  Observe that by the same argument we can obtain an analogous expression for $Z$ with
  $\widetilde{\mathcal{W}}$ replaced by $\mathcal{W}$. However, by the definition of
  $\varphi_j^{(b)}$ we find that
  \begin{equation*}
    Z = \det\!\left[ \sum_{x\in \mathcal{X}} \psi_{i}(x)\varphi_{j}^{(b)}(x) \right]_{i,j=1}^{N}.
  \end{equation*}
  By the biorthogonality assumption,
  \begin{equation*}
    \sum_{x\in \mathcal{X}} \psi_{i}(x)\varphi_{j}^{(b)}(y) = \left\langle \psi_i,\varphi_j^{(b)}\right\rangle_{L^2(\mathcal{X})} = \uno{i=j},
  \end{equation*}
  and hence $Z=1$. Now observe that we can rewrite the kernel in appearing in
  \eqref{eq:wtZ} as
  \begin{equation*}
    \sum_{\substack{x\in \mathcal{X}\\ y\in \mathcal{Y}}} \psi_{i}(x)\widetilde{\mathcal{W}}(x,y) \varphi_{j}(y)  = \left\langle \widetilde{\mathcal{W}} \varphi_i,\psi_j\right\rangle_{L^2(\mathcal{X})}
  \end{equation*}
  by using the definition of the operator $\widetilde{\mathcal{W}}$ and the inner product
  on $L^2(\mathcal{X})$.

  Therefore, it remains to prove that
  \begin{equation}\label{remaintoprove}
    \det(I-K + \widetilde{\mathcal{W}} \mathcal{W}^{-1} K)_{L^2(\mathcal{X})}  =\det\!\left[  \left\langle \widetilde{\mathcal{W}} \varphi_i,\psi_j\right\rangle_{L^2(\mathcal{X})} \right]_{i,j=1}^{N}.
  \end{equation}

  To prove the above statement we will write down the matrix for the operator
  $I-K+\widetilde{\mathcal{W}}\mathcal{W}^{-1}K$ in the basis $(\varphi_1^{(b)},\ldots,
  \varphi_N^{(b)},(\textrm{span}\{\psi_i\}_{i=1}^{N})^{\bot})$, where
  $(\textrm{span}\{\psi_i\}_{i=1}^{N})^{\bot}$ represents any basis of the orthogonal
  complement of span$\{\psi_i\}_{i=1}^{N}$ in $L^2(\mathcal{X})$. Let us consider the
  action of the operator $K$ on the basis elements. On $\varphi_j^{(b)}$ one sees that $K$
  acts as the identity operator:
  \begin{eqnarray*}
    (K\varphi_j^{(b)})(x_1) &=& \sum_{x_2\in \mathcal{X}} \left(\sum_{i=1}^{N} \varphi_i^{(b)}(x_1) \psi_i(x_2)\right)\varphi_j^{(b)}(x_2) = \sum_{i=1}^{N} \varphi_i^{(b)}(x_1)  \left( \sum_{x_2\in \mathcal{X}}\psi_i(x_2)\varphi_j^{(b)}(x_2)\right) \\
    &=&  \sum_{i=1}^{N} \varphi_i^{(b)}(x_1) \left\langle \psi_i,\varphi_j^{(b)}\right\rangle_{L^2(\mathcal{X})} = \sum_{i=1}^{N} \varphi_i^{(b)}(x_1)\uno{i=j} =\varphi_j^{(b)}(x_1).
  \end{eqnarray*}
  It is likewise clear that $K$ acts on the basis elements of
  $(\textrm{span}\{\psi_i\}_{i=1}^{N})^{\bot}$ by taking them all to zero. Thus we may
  write $K$ as the matrix \begin{equation*} K =\left(
      \begin{array}{c|c}
        I & 0 \\\hline
        0 & 0 \\
      \end{array}
    \right)
  \end{equation*}
  where the two blocks correspond to the basis elements $\{\varphi_j^{(b)}\}_{j=1}^{N}$
  and $(\textrm{span}\{\psi_i\}_{i=1}^{N})^{\bot}$. This shows that
  \begin{equation*}
    I-K =\left(
      \begin{array}{c|c}
        0 & 0 \\\hline
        0 & I \\
      \end{array}
    \right).
  \end{equation*}

  The remaining operator to study is $ \widetilde{\mathcal{W}} \mathcal{W}^{-1} K$. %  An
  % important point here is that $\mathcal{W}$ is not obviously invertible, however
  % $\mathcal{W}^{-1} K$ still makes sense. This is because $K$ projects onto the span of
  % $\{\varphi_j^{(b)}\}_{j=1}^{N}$, and on this linear subspace $\mathcal{W}$ has an
  % inverse, due to (\ref{varphib}).
  % Since $K$ is given in matrix form as above, it is clear that
  Writing the corresponding matrix in blocks as above we get from \eqref{eq:defWinvK} that
  \begin{equation*}
    \widetilde{\mathcal{W}} \mathcal{W}^{-1} K =\left(
      \begin{array}{c|c}
        A & 0 \\\hline
        * & 0 \\
      \end{array}
    \right)
\end{equation*}
where the $N\times N$ matrix $A$ is yet to be determined. The value of the star is not important. To see this, write
\begin{equation*}
    I-K+\widetilde{\mathcal{W}} \mathcal{W}^{-1} K =\left(
      \begin{array}{c|c}
        A & 0 \\\hline
        * & I \\
      \end{array}
    \right)
\end{equation*}
and observe then that
\begin{equation}\label{detA}
    \det(I-K + \widetilde{\mathcal{W}} \mathcal{W}^{-1} K)_{L^2(\mathcal{X})} = \det\!\left[A_{i,j}\right]_{i,j=1}^{N}.
\end{equation}
The value of $A_{i,j}$ can be found by using the inner product,
\begin{equation*}
    A_{i,j}  = \left\langle \widetilde{\mathcal{W}} \mathcal{W}^{-1} K \varphi_i^{(b)},\psi_j\right\rangle_{L^2(\mathcal{X})}.
\end{equation*}
% Note that
% \begin{equation*}
%     \mathcal{W}^{-1} K \varphi_i^{(b)} = \mathcal{W}^{-1} \varphi_i^{(b)} = \varphi_i,
% \end{equation*}
% hence
Recalling that $\mathcal{W}^{-1} K \varphi_i^{(b)}= \varphi_i$ we deduce that
\begin{equation*}
    A_{i,j}  = \left\langle \widetilde{\mathcal{W}} \varphi_i,\psi_j\right\rangle_{L^2(\mathcal{X})}.
\end{equation*}
Combining this with (\ref{detA}) proves (\ref{remaintoprove}) and hence completes the
proof of the theorem.
\end{proof}

\section{Equivalence of extended kernel and path-integral kernel Fredholm determinants}\label{equivsec}
There are various types of limits one can take of graph-based non-intersecting line
ensembles. In this section we will show that formulas of the type given in the previous
section survive these limits. We do not prove this directly via a limit transition, but
rather show how such formulas arise via manipulations of the extended kernel Fredholm
determinants which describe these limiting systems. The main result of this section is,
therefore, the equality of two types of Fredholm determinants.

This equality will be stated in an abstract setting in this
section, and later applied to the examples we are interested in in Section
\ref{sec:examples}. A concrete example to keep in mind is the Airy$_2$ process, which we
will use throughout this section to illustrate the objects we will introduce and the
assumptions we will make on them. The Airy$_2$ process was introduced in the Introduction
and is discussed in further detail in Section \ref{Airy2Sec}, we refer the reader there
for details and just recall the definitions of the Airy Hamiltonian $H=-\Delta+x$ and the
Airy kernel $\KAirytwo(x,y)=\int_0^{\infty} d\lambda\Ai(x+\lambda)\Ai(y+\lambda)$.

\subsection{Fredholm determinants}\label{Fredsec}
Let us briefly introduce some of the basic notions related to Fredholm determinants (we
refer the reader to \cite{Simon} for more details). Consider a separable Hilbert space
$\ch$ and let $A$ be a bounded linear operator acting on $\ch$ ($\ch=L^2(\rr)$ in the
Airy$_2$ case). Let $|A|=\sqrt{A^*A}$ be the unique positive square root of the operator
$A^*A$. The \emph{trace norm} of $A$ is defined as $\|A\|_1= \sum_{n=1}^{\infty}\langle
e_n,|A|e_n\rangle$, where $\{e_n\}_{n\geq 1}$ is any orthonormal basis of $\ch$. We say
that $A\in\mathcal{B}_1(\ch)$, the family of \emph{trace class operators}, if
$\|A\|_1<\infty$. For $A\in\mathcal{B}_1(\ch)$, one can define the trace
$\tr(A)=\sum_{n=1}^{\infty}\langle e_n,A e_n\rangle$. For later use we also define the
\emph{Hilbert-Schmidt norm} $\|A\|_2 =\sqrt{\tr(|A|^2)}$ and say that $A\in\cb_2(\ch)$,
the family \emph{Hilbert-Schmidt operators}, if $\|A\|_2<\infty$. Given $A\in\cb_1(\ch)$
one can define a generalization of the finite-dimensional determinant, the \emph{Fredholm
  determinant} $\det(I+A)_{\ch}$. We refer the reader to \cite{Simon} for the details of
the definition in this level of generality and just point out that, as expected,
$\det(I+A)_\ch=\prod_n(1+\lambda_n)$, where $\lambda_n$ are the eigenvalues of $A$
(counted with algebraic multiplicity).

The result presented in this section (Theorem \ref{thm:extendedToBVP}) can be stated,
under some conditions, for operators acting on a general separable Hilbert
space. Nevertheless, in order to keep the presentation as simple as possible, and since it
is the setting we need for the examples in Section \ref{sec:examples}, we will restrict
ourselves to the case of integral operators on an $L^2$ space.

More precisely, we assume we are given a measure space $(X,\Sigma,\mu)$ and consider the
Hilbert space $L^2(X,\mu)$. For brevity we will drop $\mu$ from the notation. We will also
denote by $\cm(X)$ the space of real-valued measurable functions on $X$. By an
\emph{integral operator} we mean an operator $A\!:D\subseteq\cm(X)\longrightarrow\cm(X)$
acting as $Af(x)=\int_X \mu(dy)\,A(x,y)f(y)$, where $A\!:X\!\times\!X\longrightarrow\rr$
is the \emph{integral kernel} of $A$. We will often speak interchangeably of an integral
operator and its kernel. In particular we have abused notation by using the same
letter to denote an integral operator and its kernel. We recall that the product of two integral operators is defined by
$AB(x,y)=\int_X\mu(dz)\,A(x,z)B(z,y)$.

Though we will not appeal to this, we note that the Fredholm determinant $\det(I-K)_{L^2(X)}$ of a trace class operator $K\!:L^2(X)\to
L^2(X)$ with continuous (in both $x$ and $y$) integral kernel $K(x,y)$ has the following (absolutely convergent) series expansion
\begin{equation}
\det(I-K)_{L^2(X)} = 1+ \sum_{k\geq 1} \frac{(-1)^k}{k!} \int_X d\mu(x_1)\cdots \int_X d\mu(x_k) \det\!\left[K(x_i,x_j)\right]_{i,j=1}^{k}.\label{eq:fredholm}
\end{equation}

% following lemma collects some results which we will need in this section and the Appendix, they can be found
% in Chapters 1-3 of \cite{Simon}:

% \begin{lemma}\label{lem:fredholm}
%   \mbox{}
%   \begin{enumerate}[label=(\alph*),itemsep=4pt]
%   \item $A\mapsto\det(I+A)_{\ch}$ is a continuous function on $\mathcal{B}_1(\ch)$.
%   \item If $A\in \mathcal{B}_1(\ch)$ and $A=BC$ with $B,C\in \mathcal{B}_2(\ch)$, then
%     $\|A\|_1\leq\|B\|_2\|C\|_2$.
%   \item If $A,B\in\cb_1(\ch)$ then $AB\in\cb_1(\ch)$ and $\det(I+AB)_\ch=\det(I+BA)_\ch$.
%   \item If $\|A\|_\mathrm{op}$ denotes the operator norm of $A$ in $\ch$, then
%     $\|A\|_\mathrm{op}\leq\|A\|_2\leq\|A\|_1$, $\|AB\|_1\leq\|A\|_\mathrm{op}\,\|B\|_1$ and $\|AB\|_2\leq\|A\|_\mathrm{op}\,\|B\|_2$.
%   \item If $A\in\cb_2(\ch)$, then $\|A^*\|_2=\|A\|_2$. If $A$ has integral kernel $A(x,y)$,
%     then $\|A\|_2=\left(\int dx\,dy\,|A(x,y)|^2\right)^{1/2}$.
%   \end{enumerate}
% \end{lemma}

\subsection{Assumptions for the theorem}
In order to state the main theorem of this section in a fairly broad context, we must introduce a few operators and impose certain assumptions upon them. Most of the assumptions are technical and intended to ensure well-definedness or finiteness of the various quantities involved in the statement of the theorem. The main (not just technical) assumption is given in Assumption \ref{assum:1}.

Fix $t_1<\dots<t_n$ for the duration of this section. We will be interested in comparing the
Fredholm determinant of certain integral operators acting on the Hilbert spaces $L^2(X)$
and $L^2(\{t_1,\dotsc,t_n\}\!\times\!X)$ (the measure we use in the second space is the
product of the counting measure on $\{t_1,\dotsc,t_n\}$ and $\mu$). The operators we consider will be constructed from the following four families of operators:
\begin{itemize}
\item For each $1\leq i<j\leq n$, an integral kernel $\cw_{t_i,t_j}$ (for convenience we also introduce the notation $\cw_{t_i,t_i}=I$);
\item For each $1\leq i\leq n$, an integral kernel $K_{t_i}$;
\item For each $1\leq i<j\leq n$, an integral kernel $\cw_{t_j,t_i}K_{t_i}$ (for convenience we also introduce the notation $\cw_{t_i,t_i}K_{t_i}=K_{t_i}$);
\item For each $1\leq i\leq n$, a multiplication operator $Q_{t_i}$ acting on $\cm(X)$
  as $Q_{t_i}f(x)=q_{t_i}(x)f(x)$ for some $q_{t_i}\in\cm(X)$.
\end{itemize}
The reason for the choice of notation $\cw_{t_j,t_i}K_{t_i}$ in the third family of
operators is that we will assume below that $\cw_{t_i,t_j}\cw_{t_j,t_i}K_{t_i}=K_{t_i}$
for $i<j$ (so that even though it is not defined above as its own operator, $\cw_{t_j,t_i}$ can be thought of as a right inverse of $\cw_{t_i,t_j}$ on the range of $K_{t_i}$).

We make the following (technical) assumption.

\begin{assumption}\label{assum:0}
  \mbox{}
  \begin{enumerate}[label=(\roman*)]
  \item The integral operators $Q_{t_i}\cw_{t_i,t_j}$, $Q_{t_i}K_{t_i}$,
  $Q_{t_i}\cw_{t_i,t_j}K_{t_j}$ and $Q_{t_j}\cw_{t_j,t_i}K_{t_i}$ for $1\leq i<j\leq n$
  are all bounded operators mapping $L^2(X)$ to itself.
  \item The operator
  \[K_{t_1}-\oQ_{t_1}\cw_{t_1,t_2}\oQ_{t_2}\dotsm\cw_{t_{n-1},t_n}\oQ_{t_n}
  \cw_{t_n,t_1}K_{t_1},\] where $\oQ_{t_i}=I-Q_{t_i}$, is a bounded operator mapping
  $L^2(X)$ to itself.
  \end{enumerate}
\end{assumption}

%\dnote{I don't think we really need the boundedness, given that we will assume later that they are trace class after conjugation. But in any case it holds in the examples}

The last operator in the assumption will appear in the formula provided in Theorem
\ref{thm:extendedToBVP}. An alternative expression for this operator, which is in some
cases more convenient for checking the assumption, is given in Lemma \ref{lem:alt}.
% \begin{multline}
%   \label{eq:altRHS}
%   K_{t_1}-\oQ_{t_1}\cw_{t_1,t_2}\oQ_{t_2}\dotsm\cw_{t_{n-1},t_n}\oQ_{t_n}
%     \cw_{t_n,t_1}K_{t_1}
%     =\sum_{j=i}^n\cw_{t_i,t_j}Q_{t_j}\cw_{t_j,t_1}K_{t_1}\\
%     +\sum_{j=i}^n\sum_{k=1}^{n-j}(-1)^{k}\quad\smashoperator{\sum_{j<a_1<\dots<a_k\leq
%         n}}\quad\cw_{t_i,t_j}Q_{t_j}\cw_{{t_j},{t_{a_1}}}Q_{t_{a_1}}\cw_{t_{a_1},t_{a_2}}
%     Q_{t_{a_{k-1}}}\cw_{t_{a_{k-1}},t_{a_k}}Q_{t_{a_k}}\cw_{t_{a_k},t_{1}}K_{t_1}
% \end{multline}
% (see \eqref{eq:tswti1}), which in some cases provides an easier way to check the assumption.

 In the case of the Airy$_2$ process we take $X=\rr$, choose $\mu$ to be the Lebesgue
 measure and set $\cw_{t_i,t_j}=e^{(t_i-t_j)H}$, $K_{t_i}=\KAirytwo$, and
 $\cw_{t_j,t_i}K_{t_i}=e^{(t_j-t_i)H}\KAirytwo$ for $1\leq i\leq j\leq n$. One can take
 for example the operators $Q_{t_i}$ to be projections on intervals $[a_i,\infty)$, that
 is, $Q_{t_i}f(x)=\uno{x\geq a_i}f(x)$, which corresponds to studying the finite
 dimensional distributions of the Airy$_2$ process (we will make a more general choice in
 Section \ref{Airy2Sec}).

Going back to the general setting, we will make a certain algebraic assumption on the
operators $\cw_{t_i,t_j}$, $K_{t_i}$ and $\cw_{t_j,t_i}K_{t_i}$.

\begin{assumption}\label{assum:1}
  For each $i\leq j\leq k$ the following hold:
    \begin{enumerate}[label=(\roman*)]
    \item \emph{Right-invertibility}: $\cw_{t_i,t_j}\cw_{t_j,t_i}K_{t_i}=K_{t_i}$;
    \item \emph{Semigroup property}: $\cw_{t_i,t_j}\cw_{t_j,t_k}=\cw_{t_i,t_k}$;
    \item \emph{Reversibility relation}: $\cw_{t_i,t_j}K_{t_j}=K_{t_i}\cw_{t_i,t_j}$.%  in the sense that either of the
      % following two hold:
      % \begin{enumerate}[label=(\roman{enumi}.\alph*)]
      % \item The domain of $K_{t_i}$ can be enlarged so that
      %   $K_{t_i}\!:\cv_2\longrightarrow\cv_2$ and
      %   $\cw_{t_i,t_j}K_{t_j}=K_{t_i}\cw_{t_i,t_j}$ as an operator mapping $\cv_2$
      %   to itself.
      % \item $\cw_{t_i,t_j}$ preserves $\ch$ and $\cw_{t_i,t_j}K_{t_j}=K_{t_i}\cw_{t_i,t_j}$ as an operator mapping $\ch$
      %   to itself.
      % \end{enumerate}
    \end{enumerate}
 \end{assumption}

 The second property is clear in the Airy$_2$ case, while (i) and (iii) follow from the
 fact that $\KAirytwo$ is the projection operator into the negative (generalized)
 eigenspace of the Airy Hamiltonian $H$ (see Section \ref{Airy2Sec}).

 Let us now explain how these operators will be used. Using the kernels introduced above
 we define an \emph{extended kernel} $K^{\rm ext}$ as follows: for $1\leq i,j\leq n$ and
 $x,y\in X$,
\begin{equation}\label{eq:generalExt}
  K^{\rm ext}(t_i,x;t_j,y)=
  \begin{dcases*}
    \cw_{t_i,t_j}K_{t_j}(x,y) & if $i\geq j$,\\
    -\cw_{t_i,t_j}(I-K_{t_j})(x,y) & if $i<j$.
  \end{dcases*}
\end{equation}
This definition coincides with the usual notion of extended correlation kernels of
determinantal point processes, cf. \cite{tracyWidom-Diff,BR,Eyn,PNGJ,Nag,borDet}. In the
case of the Airy$_2$ process, it coincides with the definition given in the Introduction
and in \eqref{eq:KAi2}. As an operator, $K^{\rm ext}$ acts on
$f\in L^1_{\rm loc}(\{t_1,\dotsc,t_n\}\!\times\!X)$ as
\[K^{\rm ext}f(t_i,x)=\sum_{j=1}^n\int_Xd\mu(y)\,K^{\rm ext}(t_i,x;t_j,y)f(t_j,y).\] See
Section \ref{sec:examples} for concrete examples.

  % Next we consider a family of real-valued functions $\{q_{t_i},u_{t_i},v_{t_i}\}_{1\leq
  % i \leq n}$, with $u_{t_i}$ and $v_{t_i}$ strictly positive, to which we associate the
  % multiplication operators $Q_{t_i}f(x)=q_{t_i}(x)f(x)$,
  % $Qf(t_i,x)=Q_{t_i}f(x)=q^{1/2}_{t_i}(x)f(x)$, $U_{t_i}f(x)=u_{t_i}(x)f(x)$ and
  % $V_{t_i}f(x)=v_{t_i}(x)f(x)$.

 We also need to make the following (technical) analytical assumption.

\begin{assumption}\label{assum:2}
  One can choose multiplication operators $V_{t_i}$, $V'_{t_i}$, $U_{t_i}$ and $U'_{t_i}$
  acting on $\cm(X)$, for $1\leq i\leq n$, in such a way that:
  \begin{enumerate}[label=(\roman*)]
  \item $V_{t_i}'V_{t_i}Q_{t_i}=Q_{t_i}$  and $K_{t_i}U_{t_i}'U_{t_i}=K_{t_i}$, for all $1\leq i\leq n$.
  \item The operators $V_{t_i}Q_{t_i}K_{t_i}V_{t_i}'$, $V_{t_i}Q_{t_i}\cw_{t_i,t_j}V_{t_j}'$,
    $V_{t_i}Q_{t_i}\cw_{t_i,t_j}K_{t_j}V_{t_j}'$ and
    $V_{t_j}Q_{t_j}\cw_{t_j,t_i}K_{t_i}V_{t_i}'$ preserve $L^2(X)$ and are trace class in $L^2(X)$, for all $1\leq i<j\leq n$.
  \item The operator
    $U_{t_{i}}\!\left[\cw_{t_{i},t_1}K_{t_1}-\oQ_{t_{i}}\cw_{t_{i},t_{i+1}}\dotsm
      \oQ_{t_{n-1}}\cw_{t_{{n-1}},t_{n}}\oQ_{t_{n}}\cw_{t_{n},t_1}K_{t_1}\right]U_{t_1}'$ preserves
    $L^2(X)$ and is trace class in $L^2(X)$, for all $1\leq i\leq n$, where
    $\oQ_{t_i}=I-Q_{t_i}$.
  % \item For each $i<j$, $\cw_{t_i,t_j}Q_{t_j}$ maps $\ch$ to itself, and is a bounded
  %   operator when seen as an operator from $\ch$ to $\ch$.
  \end{enumerate}
\end{assumption}

% The first three assumptions are necessary to make sure that the certain products of
% operators are trace class. In particular they will imply that both determinants appearing in
% \eqref{eq:extToBVP} are well-defined. The last assumption is necessary to ensure that
% the operator in the determinant on the right side of \eqref{eq:extToBVP} is a bounded
% operator from $\ch$ to itself.

The primes in $U'_{t_i}$ and $V'_{t_i}$ mean that these are almost (left) inverses of the
operators $U_{t_i}$ and $V_{t_i}$, and hence the multiplication by these operators in (ii)
and (iii) should be thought of as a conjugation. The distinction is because in many cases
it will be necessary to let $V_{t_i}$ be multiplication by a function which is 0 where
$q_{t_i}$ is 0, in which case $V_{t_i}$ is not invertible, with an analogous situation for
$U_{t_i}$ and $U_{t_i}'$.

Before stating the main result of this section, Theorem \ref{thm:extendedToBVP}, let us
state a formula which reexpresses the operator appearing in Assumption
\ref{assum:2}(iii). Besides being used in the proof of the below theorem,
this formula is often useful in checking the assumption (for example, as in Remark \ref{rem:assum2}).

\begin{lemma}\label{lem:alt}
  Writing $\oQ_{t}=I-Q_{t}$, we have, for any $1\leq i\leq n$,
  \begin{multline*}
    \cw_{t_i,t_1}K_{t_1}-\oQ_{t_i}\cw_{t_i,t_2}\oQ_{t_2}\dotsm\cw_{t_{n-1},t_n}\oQ_{t_n}
    \cw_{t_n,t_1}K_{t_1}\\
    =\sum_{j=i}^n\sum_{k=0}^{n-j}(-1)^{k}\quad\smashoperator{\sum_{j=a_0<a_1<\dots<a_k\leq
        n}}\quad\cw_{t_i,t_j}Q_{t_j}\cw_{t_{j},{t_{a_1}}}Q_{t_{a_1}}\cw_{t_{a_1},t_{a_2}}
    Q_{t_{a_{k-1}}}\cw_{t_{a_{k-1}},t_{a_k}}Q_{t_{a_k}}\cw_{t_{a_k},t_{1}}K_{t_1}.
  \end{multline*}
\end{lemma}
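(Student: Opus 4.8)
The plan is to prove the identity by downward induction on $i$, using two elementary ``merging rules'' that are consequences of Assumption \ref{assum:1}. The first is the pure \emph{forward merge} $\cw_{t_a,t_b}\cw_{t_b,t_c}=\cw_{t_a,t_c}$ for $a\le b\le c$, which is just the semigroup property. The second is the \emph{mixed merge} $\cw_{t_a,t_b}\cw_{t_b,t_1}K_{t_1}=\cw_{t_a,t_1}K_{t_1}$ for $a\le b$, which collapses a forward operator against a backward-$K$ operator; I would obtain it by using the reversibility relation to move $K$ to the left of $\cw_{t_a,t_b}$ and then the right-invertibility $\cw_{t_1,t_b}\cw_{t_b,t_1}K_{t_1}=K_{t_1}$ (together with the semigroup) to recollapse the resulting forward--backward product. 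Equivalently, one may prove the statement by expanding every $\oQ_{t_m}=I-Q_{t_m}$ into a sum over subsets $S\subseteq\{i,\dots,n\}$ recording the positions at which $Q$ is chosen (with sign $(-1)^{|S|}$), applying the two merge rules to the runs of $\cw$'s between consecutive chosen positions, and matching the result against the right-hand side; the inductive formulation below is a bookkept version of this.

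Concretely, write $\Phi_i:=\oQ_{t_i}\cw_{t_i,t_{i+1}}\oQ_{t_{i+1}}\dotsm\oQ_{t_n}\cw_{t_n,t_1}K_{t_1}$, so the left-hand side of the lemma is $\cw_{t_i,t_1}K_{t_1}-\Phi_i$, and let $\mathcal{R}_i$ denote the asserted right-hand side. The recursion $\Phi_i=\cw_{t_i,t_{i+1}}\Phi_{i+1}-Q_{t_i}\cw_{t_i,t_{i+1}}\Phi_{i+1}$ drives the induction. The base case $i=n$ is immediate: $\cw_{t_n,t_1}K_{t_1}-\Phi_n=Q_{t_n}\cw_{t_n,t_1}K_{t_1}$, which is the lone ($j=n$, $k=0$) term of $\mathcal{R}_n$. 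For the step I would substitute the hypothesis $\Phi_{i+1}=\cw_{t_{i+1},t_1}K_{t_1}-\mathcal{R}_{i+1}$. Distributing $\cw_{t_i,t_{i+1}}$ over $\mathcal{R}_{i+1}$ converts each leading factor $\cw_{t_{i+1},t_j}$ into $\cw_{t_i,t_j}$ by the forward merge, reproducing exactly the $j\ge i+1$ block of $\mathcal{R}_i$; the mixed merge turns $\cw_{t_i,t_{i+1}}\cw_{t_{i+1},t_1}K_{t_1}$ into $\cw_{t_i,t_1}K_{t_1}$, so the two bare leading terms cancel; and the surviving $Q_{t_i}\cw_{t_i,t_{i+1}}\Phi_{i+1}$ contribution, after a reindexing that sets $a_0=i$ and shifts $k\mapsto k-1$, assembles precisely the $j=i$ block $Q_{t_i}\sum_{k}(-1)^k\sum_{i<a_1<\dots<a_k}\cw_{t_i,t_{a_1}}Q_{t_{a_1}}\dotsm\cw_{t_{a_k},t_1}K_{t_1}$ of $\mathcal{R}_i$.

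I expect the main obstacle to be the mixed merge and, more broadly, keeping the backward objects $\cw_{t_{a_k},t_1}K_{t_1}$ well-defined at each stage. Since $\cw_{t_b,t_a}$ for $b>a$ is only meaningful through its pairing with $K$ (it is a right inverse of $\cw_{t_a,t_b}$ merely on the range of the relevant $K_{t}$), every collapse of a forward operator against a backward-$K$ operator must be justified on the appropriate range rather than by a naive two-sided semigroup, and this is exactly where reversibility and right-invertibility have to be combined with care. Once the mixed merge is secured, the remainder is the routine sign and index bookkeeping indicated above, namely checking that the nested sums defining $\mathcal{R}_i$ split cleanly into their $j=i$ and $j\ge i+1$ parts.
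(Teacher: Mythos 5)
Your proof is correct, but it is organized differently from the paper's. The paper proves the identity in the opposite direction and without induction: starting from the right-hand side, it substitutes $Q_{t_m}=I-\oQ_{t_m}$ for every factor except the leading one, interchanges sums, and invokes the binomial identity $\sum_{k=m}^{n-j}\binom{n-j-m}{k-m}(-1)^{k+m}=\uno{m=n-j}$ to kill all terms except those indexed by the full consecutive tuple $b_m=j+m$; the surviving sum $\sum_{j=i}^n\cw_{t_i,t_j}(I-\oQ_{t_j})\cw_{t_j,t_{j+1}}\oQ_{t_{j+1}}\dotsm\oQ_{t_n}\cw_{t_n,t_1}K_{t_1}$ then telescopes to the left-hand side. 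Your downward induction via $\Phi_i=\oQ_{t_i}\cw_{t_i,t_{i+1}}\Phi_{i+1}$ is a bookkept version of the reverse expansion ($\oQ$ into $I-Q$): it trades the inclusion--exclusion/binomial cancellation for an inductive step whose verification I checked goes through exactly as you describe (the $j\geq i+1$ block arises from $\cw_{t_i,t_{i+1}}\mathcal{R}_{i+1}$ by the semigroup property, the bare terms cancel by the mixed merge, and $Q_{t_i}\cw_{t_i,t_{i+1}}\Phi_{i+1}$ reassembles the $j=i$ block after the shift $k\mapsto k+1$ with $a_1=j$). Your version is arguably more elementary and self-checking; the paper's is a one-shot closed-form computation. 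Crucially, both arguments hinge on the very same two merge rules, and in particular both need the mixed merge $\cw_{t_a,t_b}\hs\cw_{t_b,t_1}K_{t_1}=\cw_{t_a,t_1}K_{t_1}$ for general $a\leq b$ --- in the paper it enters silently at the telescoping endpoint, where the $j=n$ term $\cw_{t_i,t_n}\cw_{t_n,t_1}K_{t_1}$ must be identified with $\cw_{t_i,t_1}K_{t_1}$. You are right to flag this as the delicate point: strictly speaking, Assumption \ref{assum:1} as stated only yields $\cw_{t_1,t_b}\cw_{t_b,t_1}K_{t_1}=K_{t_1}$ (the case $a=1$), and applying $\cw_{t_1,t_a}$ to both sides of the general mixed merge shows only that the difference lies in the kernel of $\cw_{t_1,t_a}$; pinning it down requires either injectivity there or the compatibility that is built into the spectral definitions of the backward kernels $\cw_{t_j,t_i}K_{t_i}$ in all the paper's examples. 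Your sketched derivation via reversibility plus right-invertibility is therefore at the same (formal) level of justification as the paper's own, which asserts the analogous identity $\cw_{t_i,t_{j-1}}K_{t_{j-1}}\cw_{t_{j-1},t_j}=\cw_{t_i,t_j}K_{t_j}$ for all $i$ as a consequence of Assumptions \ref{assum:1}(ii)--(iii) without further comment; so this is a shared caveat, not a defect of your argument relative to the paper. One last remark: your definition of $\Phi_i$ silently corrects the index typos in the lemma's statement (whose left-hand side should read $\oQ_{t_i}\cw_{t_i,t_{i+1}}\oQ_{t_{i+1}}\dotsm$, as in Assumption \ref{assum:2}(iii)), which is the right reading.
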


We postpone the proof of this lemma until the end of this section.

\begin{remark}\label{rem:assum2}
  % \note{Daniel -- we discussed changing this remark into a Lemma which shows that the
  % RHS of \eqref{eq:tswti1} equals to the RHS of \eqref{eq:finishDetProof}. Then you
  % could take the portion of the proof of the theorem which effectively shows this, and
  % prove the lemma via that method. Then in the proof of the theorem, just refer to the
  % lemma. In place of the remark, we can include a shorter remark which says that this
  % lemma (in addition to being used in the proof) is often useful in checking the
  % assumptions.}  \note{Also, about the remark, there are too many colons in a row. It
  % doesn't read well.}  We observe that, by \eqref{eq:tswti1} below,
  % \begin{multline}
  %   \label{eq:equivassum2iii}
  %   \cw_{t_{i},t_1}K_{t_1}-\oQ_{t_{i}}\cw_{t_{i},t_{i+1}}\dotsm
  %   \oQ_{t_{n-1}}\cw_{t_{{n-1}},t_{n}}\oQ_{t_{n}}\cw_{t_{n},t_1}K_{t_1}\\
  %   =\sum_{j=i}^n\sum_{k=0}^{n-j}(-1)^{k}\quad\smashoperator{\sum_{j=a_0<a_1<\dots<a_k\leq
  %   n}}\quad\cw_{t_i,t_{a_0}}Q_{t_{a_0}}\cw_{{t_{a_0}},{t_{a_1}}}Q_{t_{a_1}}\cw_{t_{a_1},t_{a_2}}
  %   Q_{t_{a_{k-1}}}\cw_{t_{a_{k-1}},t_{a_k}}Q_{t_{a_k}}\cw_{t_{a_k},t_{1}}K_{t_1},
  % \end{multline}
  % which often gives a better formula for checking Assumption \ref{assum:2}(iii).
  Suppose that there exist multiplication operators $\wt V_{t_i}$ and $\wt V'_{t_i}$
  acting on $\cm(X)$, for $1\leq i\leq n$, in such a way that:
  \begin{enumerate}[label=(\roman*)]
  \item $\wt V_{t_i}'\wt V_{t_i}Q_{t_i}=Q_{t_i}$ and $K_{t_i}\wt V_{t_i}\wt
    V_{t_i}'=K_{t_i}$, for all $1\leq i\leq n$;
  \item The operators $\wt V_{t_i}Q_{t_i}K_{t_i}\wt V_{t_i}'$, $\wt
    V_{t_i}Q_{t_i}\cw_{t_i,t_j}\wt V_{t_j}'$, $\wt V_{t_i}Q_{t_i}\cw_{t_i,t_j}K_{t_j}\wt
    V_{t_j}'$ and $\wt V_{t_j}Q_{t_j}\cw_{t_j,t_i}K_{t_i}\wt V_{t_i}'$ preserve $L^2(X)$
    and are trace class in $L^2(X)$, for all $1\leq i<j\leq n$.
  \end{enumerate}
  Then it is not hard to check, using the formula given in Lemma \ref{lem:alt}, that
  Assumption \ref{assum:2} holds, taking $U_{t_i}=V_{t_i}'=\wt V_{t_i}'$ and
  $U_{t_i}'=V_{t_i}=\wt V_{t_i}$ (see the end of the proof of Corollary
  \ref{Airytwobddform} in Appendix \ref{sec:app} for more details). In the case of the
  Airy$_2$ process, when the operators $Q_{t_i}$ are of the form $Q_{t_i}f(x)=\uno{x\geq
    a_i}f(x)$ as discussed above, both $\wt V_{t_i}$ and $\wt V_{t_i}'$ can be taken
  to be the identity. If, on the other hand, one assumes $q_{t_i}(x)$ to be 0 for
  $x<a_i$ but to grow at a certain rate for $x\geq a_i$, as we will in Section
  \ref{Airy2Sec}, then it is necessary to choose these operators more carefully (see the
  proof of Corollary \ref{Airytwobddform}).
\end{remark}

\subsection{Identity between extended and  path-integral kernel Fredholm determinants}
Define a diagonal operator $Q$ acting on $f\in\cm(\{t_1,\dotsc,t_n\}\!\times\!X)$ as
\begin{equation}\label{eq:defQ}
  Qf(t_i,\cdot)=Q_{t_i}f(t_i,\cdot).
\end{equation}
Note that, by Assumption \ref{assum:0}, $QK^{\rm ext}$ preserves
$L^2(\{t_1,\dotsc,t_n\}\!\times\!X)$. The following result expresses the Fredholm
determinant of $I-QK^{\rm ext}$ on $L^2(\{t_1,\dotsc,t_n\}\!\times\!X)$ as a Fredholm
determinant on $L^2(X)$. The first example of such a formula was provided by \cite{PS} for
the case of the Airy$_2$ process (see also \cite{prolhacSpohn}). This was later extended
to the Airy$_1$ process in \cite{quastelRemAiry1}. This type of formulas have recently
been found to be very useful in the study of these processes, see for example
\cite{CQR,MQR,quastelRemAiry1,qr12,qrt}.

\begin{theorem}\label{thm:extendedToBVP}
  With the above notation, and under Assumptions \ref{assum:0}, \ref{assum:1} and \ref{assum:2}, we have
  \begin{equation}\label{eq:extToBVP}
    \det\!\big(I-QK^{\rm ext}\big)_{L^2(\{t_1,\dots,t_n\}\times X)}
    =\det\!\big(I-K_{t_1}+\oQ_{t_1}\cw_{t_1,t_2}\oQ_{t_2}\dotsm\cw_{t_{n-1},t_n}\oQ_{t_n}
    \cw_{t_n,t_1}K_{t_1}\big)_{L^2(X)},
  \end{equation}
  where $\oQ_t=I-Q_t$.
\end{theorem}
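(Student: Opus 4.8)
The plan is to treat $I-QK^{\rm ext}$ as an $n\times n$ array of operators on $L^2(X)$ (indexed by the times $t_1,\dots,t_n$) and to collapse its Fredholm determinant to a single determinant on one copy of $L^2(X)$ by block row/column manipulations, using only the three relations of Assumption \ref{assum:1}. The first step is to put the kernel \eqref{eq:generalExt} into a workable algebraic shape. Applying the reversibility relation $\cw_{t_i,t_j}K_{t_j}=K_{t_i}\cw_{t_i,t_j}$ to the case $i<j$, one writes the $(i,j)$ block of $K^{\rm ext}$ as $\cw_{t_i,t_j}K_{t_j}$ when $i\ge j$ and as $-\cw_{t_i,t_j}+\cw_{t_i,t_j}K_{t_j}$ when $i<j$; equivalently, as an $n\times n$ block matrix,
\[
K^{\rm ext}=I-\mathcal{E}\,\overline{\mathbf K}+\mathbf A,
\]
where $\mathcal{E}$ is the unit upper block-triangular matrix with entries $\cw_{t_i,t_j}$ ($i\le j$), $\overline{\mathbf K}=\operatorname{diag}(I-K_{t_i})$, and $\mathbf A$ is the strictly lower block-triangular matrix with the backward entries $\cw_{t_i,t_j}K_{t_j}$ ($i>j$). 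This cleanly separates the forward propagators, the diagonal data, and the backward propagators, all of which are among the four families of operators posited before Assumption \ref{assum:0}.

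Second, I would carry out the reduction. Writing $I=Q_{t_i}+\oQ_{t_i}$ at each slice and using the semigroup property $\cw_{t_i,t_j}\cw_{t_j,t_k}=\cw_{t_i,t_k}$ to merge consecutive forward propagators, one eliminates all but the first block row and column, so that every diagonal block except the top-left becomes the identity and the determinant telescopes to a single $L^2(X)$ determinant. The right-invertibility relation $\cw_{t_i,t_j}\cw_{t_j,t_i}K_{t_i}=K_{t_i}$ is what lets the chain of forward propagators close up into the backward propagator $\cw_{t_n,t_1}$ that ``anchors'' everything back at $t_1$ (note that when all $Q_{t_i}=0$ this collapses, by semigroup and right-invertibility, to $\cw_{t_1,t_n}\cw_{t_n,t_1}K_{t_1}=K_{t_1}$, so both sides give the determinant of the identity, a useful consistency check). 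The surviving top-left operator is exactly the sum over which time slices carry a factor $Q_{t_a}$ rather than $I$ in the product $\oQ_{t_1}\cw_{t_1,t_2}\oQ_{t_2}\dotsm\cw_{t_n,t_1}K_{t_1}$; for instance when $n=2$ one gets $I-Q_{t_1}K_{t_1}-\oQ_{t_1}\cw_{t_1,t_2}Q_{t_2}\cw_{t_2,t_1}K_{t_1}$. Lemma \ref{lem:alt} is precisely the general-$n$ identity that re-sums this expansion into $I-K_{t_1}+\oQ_{t_1}\cw_{t_1,t_2}\oQ_{t_2}\dotsm\cw_{t_n,t_1}K_{t_1}$, thereby matching the right-hand side of \eqref{eq:extToBVP}. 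Equivalently, one can phrase the collapse as an induction on $n$, peeling off the last time by a Schur-complement/$\det(I-AB)=\det(I-BA)$ argument and merging $\cw_{t_{n-1},t_n}$ with its neighbour via the semigroup property.

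The last step, and the main obstacle, is functional-analytic rigor: the individual building blocks need not be trace class, and the propagators $\cw_{t_i,t_j}$ are not genuinely invertible --- only right-invertible on $\operatorname{ran}K_{t_i}$ --- while the multiplication operators $Q_{t_i}$ may vanish, so naive block-LU or Schur-complement shortcuts are not available and every cancellation must be routed through Assumption \ref{assum:1}(i). This is exactly what Assumptions \ref{assum:0} and \ref{assum:2} are for. I would first conjugate by the multiplication operators $V_{t_i},V'_{t_i},U_{t_i},U'_{t_i}$ of Assumption \ref{assum:2}: since these are mutual ``almost inverses'' (Assumption \ref{assum:2}(i)) the conjugation leaves all Fredholm determinants unchanged by cyclic invariance, while Assumption \ref{assum:2}(ii)--(iii) guarantees that the conjugated blocks --- and in particular the operator of Lemma \ref{lem:alt} --- are genuinely trace class, so that each determinant identity used in the reduction is legitimate; Assumption \ref{assum:0} supplies the boundedness and well-definedness needed for the intermediate operators. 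Verifying that the formal block manipulations survive this conjugation verbatim is the delicate part of the argument, whereas the algebra itself is elementary.
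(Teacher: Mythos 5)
Your proposal is correct and follows essentially the same route as the paper's own proof: your decomposition $K^{\rm ext}=I-\mathcal{E}\,\overline{\mathbf K}+\mathbf A$ is the paper's splitting $\sK=\sQ(\sW^-\sK^{\rm d}+\sW^+(\sK^{\rm d}-\sI))$ in disguise, your telescoping collapse is implemented there by conjugating with the unit upper-triangular propagator matrix $\sI+\sW^+$ (whose inverse is bidiagonal by the semigroup property, with $\det(\sI+\sQ\sW^+)=1$ and cyclicity of the Fredholm determinant doing the reduction to a matrix supported on the first block column), the surviving $(1,1)$ entry is resummed by Lemma \ref{lem:alt} exactly as you say, and the trace-class bookkeeping via $V_{t_i},V'_{t_i},U_{t_i},U'_{t_i}$ matches Remark \ref{rem:extToBVP} and the proof. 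The one mechanism you leave implicit --- why the forward and backward parts merge into a single first column --- is precisely the identity $\cw_{t_i,t_{j-1}}K_{t_{j-1}}\cw_{t_{j-1},t_j}=\cw_{t_i,t_j}K_{t_j}$ (reversibility plus semigroup) applied to $(\sW^-+\sW^+)\sK^{\rm d}(\sI+\sW^+)^{-1}$, which is the paper's \eqref{eq:T-T+}; with that made explicit your argument coincides with the paper's.
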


\begin{remark}\label{rem:extToBVP}
  The operators appearing in both Fredholm determinants preserve $L^2(X)$ by Assumption
  \ref{assum:0}. Moreover, the Fredholm determinants are well-defined thanks to
  Assumption \ref{assum:2}, even though the operators appearing there are not necessarily
  trace class. In fact, if we define the diagonal operator $V$ acting on
  $u\in L^2(\{t_1,\dotsc,t_n\}\!\times\!X)$ as $(Vu)_{t_i}=V_{t_i}u_{t_i}$, and similarly
  define $V'$, then $VQK^{\rm ext}V'$ is trace class by Assumption \ref{assum:2}(ii) and
  by the cyclic property of the determinant and the fact that $V'VQ=Q$ it leads to the
  same Fredholm expansion for $\det(I-VQK^{\rm
    ext}V')_{L^2(\{t_1,\dots,t_n\}\times X)}$ and for $\det(I-QK^{\rm
    ext})_{L^2(\{t_1,\dots,t_n\}\times X)}$. The same argument applies to the Fredholm
  determinant on the right-hand side of \eqref{eq:extToBVP} by Assumption \ref{assum:2}(iii),
  if we multiply it on the left by $U_{t_1}$ and on the right by $U_{t_1}'$. Hence
  both sides of \eqref{eq:extToBVP} are well-defined and one should really read the equality as
  \begin{multline*}
    \det\!\big(I-VQK^{\rm ext}V'\big)_{L^2(\{t_1,\dots,t_n\}\times X)}\\
    =\det\!\big(I-U_{t_1}(K_{t_1}-\oQ_{t_1}\cw_{t_1,t_2}\dotsm\cw_{t_{n-1},t_n}\oQ_{t_n}
    \cw_{t_n,t_1}K_{t_1})U_{t_1}'\big)_{L^2(X)}.
  \end{multline*}
\end{remark}

%   both sides of the equality map $L^2(X)$ to itself thanks to
%   Assumptions \ref{assum:1} and \ref{assum:2}(iv). Moreover, the two Fredholm determinants
%   are well-defined thanks to Assumption \ref{assum:2}, even though the operators appearing
%   there are not necessarily trace class. In fact, if we define the diagonal operator $V$
%   acting on $u\in\bigoplus_{t\in\{t_1,\dots,t_n\}}L^2(X)$ as $(Vu)_{t_i}=V_{t_i}u_{t_i}$, and
%   similarly define $V'$, then $VQK^{\rm ext}V'$ is trace class by Assumption \ref{assum:2}
%   and by the cyclic property of the determinant and the fact that $V'VQ=Q$ it leads to the
%   same Fredholm expansion for $\det(I-VQK^{\rm
%     ext}V')_{L^2(\{t_1,\dots,t_n\}\times X)}$ as that of $\det(I-QK^{\rm
%     ext})_{L^2(\{t_1,\dots,t_n\}\times X)}$. The same holds for the operator on the
%   right side of \eqref{eq:extToBVP}, now multiplying from the left by $U_{t_1}$ and from
%   the right by $U_{t_1}'$. The facts that the operators appearing in
%   \eqref{eq:extToBVP} map $L^2(X)$ to itself and that the operators in these last two
%   determinants are trace class are straightforward to check from the assumptions in the
%   case the operators acting on $L^2(\{t_1,\dotsc,t_n\}\!\times\!X)$, while they are
%   contained in the proof of Theorem \ref{thm:extendedToBVP} for the case of the operators
%   acting on $L^2(X)$.
% \end{remark}

\begin{proof}[Proof of Theorem \ref{thm:extendedToBVP}]
  The proof of this result is a generalization of the proof of Theorem 1 of
  \cite{quastelRemAiry1} (see also the Appendix of \cite{prolhacSpohn}). We will retain
  most of the notation of \cite{quastelRemAiry1,prolhacSpohn}, and as in those papers we
  use sans-serif fonts (e.g. $\sW$) for operators on
  $L^2(\{t_1,\dots,t_n\}\!\times\!X)$. This space can be identified with the space
  $\bigoplus_{t\in\{t_1,\dots,t_n\}}L^2(X)$, and hence we may (and will) think of an
  operator $\sW$ on $L^2(\{t_1,\dots,t_n\}\!\times\!X)$ as an operator-valued $n\times n$
  matrix. We will use serif fonts for the matrix entries (e.g. $\sW_{i,j}=W$ for some $W$
  acting on $L^2(X)$). All determinants throughout this proof are computed on
  $L^2(\{t_1,\dots,t_n\}\!\times\!X)$ unless otherwise indicated.

  We will use repeatedly the following facts about trace class operators and Fredholm
  determinants on a separable Hilbert space $\ch$:
  \begin{enumerate}[label=(\roman*)]
  \item If $A,B\in\cb_1(\ch)$ then $AB\in\cb_1(\ch)$ and
    \[~~\quad\quad\det((I+A)(I+B))_{\ch}=\det(I+A)_{\ch}\det(I+B)_{\ch}.\]
    Moreover, if $A$ and $B$ are bounded linear operators on $\ch$ and both $AB,BA\in
    \cb_1(\ch)$ then
    \begin{equation}\label{eq:cyclic}
      \det(I+AB)_{\ch}=\det(I+BA)_{\ch}.
    \end{equation}
  \item An operator acting on $\bigoplus_{t\in\{t_1,\dots,t_n\}}\ch$ is trace class if and
    only if all of its matrix entries are trace class.
  \end{enumerate}
  To simplify notation throughout the proof we will replace subscripts of the form $t_i$
  by $i$, so for example $\cw_{i,j}=\cw_{t_i,t_j}$.

  Recall that we are assuming $t_1<t_2<\dots<t_n$. Let $\sK=QK^{\rm ext}$. Then $\sK$ can
  be written as
  \begin{equation}\label{eq:sK}
    \sK=\sQ(\sW^{-}\sK^{\rm d}+\sW^{+}(\sK^{\rm d}-\sI)),
  \end{equation}
  where
  \begin{equation*}
    \sK^{\rm d}_{ij}=K_{i}\uno{i=j},\qquad \sQ_{i,j}=Q_{{i}}\uno{i=j}
  \end{equation*}
  and $\sW^{-}$, $\sW^{+}$ are lower triangular, respectively strictly upper triangular,
  and defined by
  \begin{equation*}
    \sW^{-}_{ij} = \cw_{i,j}\uno{i\geq j},\quad
    \sW^{+}_{ij}=\cw_{{i},{j}}\uno{i < j}.
  \end{equation*}
  Here we are slightly abusing notation, because $\cw_{i,j}$ is not defined for
  $i>j$. However, since $\sW^-$ appears applied after $\sK^{\rm d}$, the formula makes
  sense, with $[\sW^{-}\sK^{\rm d}]_{i,j}=\cw_{i,j}K_{j}$ for $i>j$. We also define
  the diagonal operators $\sV$, $\sV'$, $\sU$ and $\sU'$ by
  \[\sV_{i,j}=V_{i}\uno{i=j},\qquad\sV_{i,j}'=V_{i}'\uno{i=j},\qquad\sU_{i,j}=U_{i}\uno{i=j}
  \qqand\sU'_{i,j}=U'_{i}\uno{i=j}.\]
  % Observe that the non-zero entries in $\sW^+$ are of the form $\cw_{i,j}$ for $i<j$,
  % while those in $\sW^-$ are of the form $\cw_{j,i}$ for $i<j$ but appear applied after
  % $K_{i}$ in the expression for $\sK$ in \eqref{eq:sK}, so by Assumption \ref{assum:1}(ii) the
  % formula makes sense.

  In order to manipulate the Fredholm determinant of $\sI-\sK$ we will need to make sure
  at each step that the appropriate operators preserve $L^2(X)$ and are trace class in
  $L^2(X)$ as needed. As a consequence, the proof is slightly cumbersome, so we will first
  briefly explain the main idea, ignoring some details and all analytical
  issues.

  Our goal is to manipulate the determinant of $\sI-\sK$ in such a way that we end up with
  the determinant of an operator-valued matrix $\sI-\wt\sK$ where only the first column of
  $\wt\sK$ is non-zero. If we achieve this, then we will have
  $\det(\sI-\sK)=\det(\sI-\wt\sK)=\det(\sI-\wt\sK_{1,1})_{L^2(X)}$, and all we will need
  to do is compute $\sK_{1,1}$. The key to obtain such an identity is the following
  observation. Using the semigroup property in Assumption \ref{assum:1}(ii) one can check
  directly that
  \begin{equation}\label{eq:IT+}
    \big[(\sI+\sW^{+})^{-1})\big]_{i,j}=I\uno{j=i}-\cw_{{i},{i+1}}\uno{j=i+1}.
  \end{equation}
  This identity is meant in the sense of products of integral kernels, where the product
  of the identity operator with an integral kernel is defined in the obvious way. Now
  using the identity $\cw_{i,{j-1}}K_{j-1}\cw_{{j-1},j}=\cw_{i,j}K_{j}$ from Assumptions
  \ref{assum:1}(ii) and \ref{assum:1}(iii) we get that
  \begin{equation}
      \big[(\sW^{-}+\sW^{+})\sK^{\rm d}(\sI+\sW^{+})^{-1}\big]_{i,j}
      =\cw_{i,j}K_{j}-\cw_{i,{j-1}}K_{j-1}\cw_{{j-1},j}\uno{j>1}
      =\cw_{{i},{1}}K_{1}\uno{j=1}.\label{eq:T-T+}
  \end{equation}
  Note that only the first column of this matrix has non-zero entries. To take advantage
  of this fact we rewrite $\sK$ as
  \begin{equation}
    \sK=\sQ(\sW^-+\sW^+)\sK^{\rm d}(\sI+\sW^+)^{-1}(\sI+\sW^+)-\sQ\sW^+,\label{eq:sKe}
  \end{equation}
  so that
  \[\sI-\sK=(\sI+\sQ\sW^+)\big[\sI-(\sI+\sQ\sW^+)^{-1}\sQ(\sW^-+\sW^+)\sK^{\rm
    d}(\sI+\sW^+)^{-1}(\sI+\sW^+)\big].\]
  The invertibility of $\sI+\sQ\sW^+$ follows from the fact that $\sQ\sW^+$ is strictly
  upper triangular. This fact also implies that $\det(\sI+\sQ\sW^+)=1$, and hence
  \begin{align*}
    \det(\sI-\sK)&=\det(\sI-(\sI+\sQ\sW^+)^{-1}\sQ(\sW^-+\sW^+)\sK^{\rm
      d}(\sI+\sW^+)^{-1}(\sI+\sW^+))\\
    &=\det(\sI-(\sI+\sW^+)(\sI+\sQ\sW^+)^{-1}\sQ(\sW^-+\sW^+)\sK^{\rm
    d}(\sI+\sW^+)^{-1}),
  \end{align*}
  where we have used the cyclic property of the determinant. Recalling that only the first
  column of $(\sW^-+\sW^+)\sK^{\rm d}(\sI+\sW^+)^{-1}$ is non-zero we deduce that
  \[\wt\sK=(\sI+\sW^+)(\sI+\sQ\sW^+)^{-1}\sQ(\sW^-+\sW^+)\sK^{\rm
    d}(\sI+\sW^+)^{-1}\] has the same property and hence
  $\det(\sI-\sK)=\det(\sI-\wt\sK)=\det(\sI-\wt\sK_{1,1})_{L^2(X)}$ as desired.

  The rest of the proof will consist in making the above argument rigorous and precise and
  then computing the resulting $\wt\sK_{1,1}$.  Recall that, by Assumption
  \ref{assum:2}(ii), each entry in the operator-valued matrix $\sV\sK\sV'$ is trace class
  in $L^2(X)$. Let
  \begin{equation}
    \sWo=\sV\sQ\sW^+\sV'\qqand\sWt=\sV\sQ(\sW^-+\sW^+)\sK^{\rm d}\sV'.\label{eq:sWs}
  \end{equation}
  Since $\sV\sQ\sW^+\sV'$ is strictly upper triangular, we have
  $(\sV\sQ\sW^+\sV')^{n+1}=0$, so $\sI+\sWo$ is invertible:
  \begin{equation}
    (\sI+\sWo)^{-1}=\sum_{k=0}^n(-1)^k(\sV\sQ\sW^{+}\sV')^k.\label{eq:inv}
  \end{equation}
  Therefore we can write
  \begin{equation*}
    \det\!\big(\sI-\sV\sK\sV'\big) =\det\!\big((\sI+\sWo)(\sI-(\sI+\sWo)^{-1}\sWt)\big).
  \end{equation*}
  We remark that $\sWo$, $\sWt$ and $(\sI+\sWo)^{-1}$ are trace class in $L^2(X)$ by
  Assumption \ref{assum:2}(ii) and \eqref{eq:inv}, and thus from the last identity we deduce that
  \begin{equation}
    \det\!\big(\sI-\sV\sK\sV'\big)=\det\!\big(\sI+\sWo\big)\det\!\big(\sI-(\sI+\sWo)^{-1}\sWt\big)
    =\det\!\big(\sI-(\sI+\sWo)^{-1}\sWt\big),\label{eq:detRed}
  \end{equation}
  where the second equality follows from the fact that, since $\sWo$ is strictly upper
  triangular, its only eigenvalue is 0, so $\det(\sI+\sWo)=1$.

  % \note{begin remove}
  % At this point it will be convenient to work directly with the Fredholm series expansion \eqref{eq:fredholm}
  % of the right-hand side of \eqref{eq:detRed}, as it will allow us to work directly with
  % the integral kernels and thus avoid unnecessary complications related with defining the
  % domain and range of the associated operators. % Recall that if $A$ is a trace class
  % % integral operator acting on $L^2(Y)$ for some measure space $(Y,\Sigma,\sigma)$, then
  % % the Fredholm series expansion of $\det(I-A)_{L^2(Y)}$ is given by
  % % \begin{equation}
  % %   \det(I-A)_{L^2(Y)}=\sum_{n\geq0}\frac{(-1)^n}{n!}\int_{Y^n}\mu(dy_1)\dotsm\mu(dy_n)
  % %   \det\!\big[A(y_i,y_j)\big]_{i,j=1}^n.\label{eq:fredSeries}
  % % \end{equation}
  % % In particular, the series on the right-hand side is convergent for any trace class
  % % operator $A$.
  % In this setting, the cyclic property of the Fredholm determinant
  % $\det(I+AB)=\det(I+BA)$ is expressed in the following formula (which follows directly
  % from a version of the Andr\'{e}ief identity \cite{And}, sometimes referred to as the
  % generalized Cauchy-Binet identity): if $A$ and $B$ are two kernels such that the
  % integrals defining $AB(x,y)$ and $BA(x,y)$ converge for all $x,y\in Y$, then
  % \begin{equation}
  %   \int_{Y^n}\mu(dy_1)\dotsm\mu(dy_n)\det\!\big[AB(y_i,y_j)\big]_{i,j=1}^n
  %   = \int_{Y^n}\mu(dy_1)\dotsm\mu(dy_n)\det\!\big[BA(y_i,y_j)\big]_{i,j=1}^n
  %   \label{eq:cyclicKer}
  % \end{equation}
  % when both integrals are finite. \note{end remove; also remove \cite{And} from bibliography}

  Write
  \begin{equation}
    (\sI+\sWo)^{-1}\sWt=\tsWo\tsWt\label{eq:W3W4}
  \end{equation}
  with
  \[\tsWo=(\sI+\sWo)^{-1}\sV\sQ(\sW^-+\sW^+)\sK^{\rm d}(\sI+\sW^+)^{-1}\sU'\qqand
  \tsWt=\sU(\sI+\sW^+)\sV'.\] Here we are using \eqref{eq:T-T+} and the identity
  $\sK\sU\sU'=\sK$. We have already checked that $\tsWo\tsWt$ is trace class in $L^2(X)$.
  % \note{begin remove} which implies in particular that the Fredholm series expansion \eqref{eq:fredholm} of
  % $\det(\sI-\tsWo\tsWt)$ is convergent\note{end remove}.
  Thus if we prove that $\tsWt\tsWo$ is also trace class we can deduce from
  \eqref{eq:cyclic}, \eqref{eq:detRed} and \eqref{eq:W3W4} that
  \begin{equation}
    \label{eq:detFinalExt}
    \det\!\big(\sI-\sV\sK\sV'\big)=\det\!\big(\sI-\tsWt\tsWo\big).
  \end{equation}

  We want to obtain an explicit expression for the kernel $\tsWt\tsWo$. Note that, in
  view of \eqref{eq:inv} and the fact that $\sV'\sV\sQ=\sQ$,
  $\sV'(\sI+\sWo)^{-1}\sV\sQ=(\sI+\sQ\sW^+)^{-1}\sQ$, so all the factors $\sV$ and $\sV'$
  cancel in $\tsWt\tsWo$:
   \[\tsWt\tsWo=\sU(\sI+\sW^+)(\sI+\sQ\sW^+)^{-1}\sQ(\sW^-+\sW^+)\sK^{\rm
    d}(\sI+\sW^+)^{-1}\sU'.\] From \eqref{eq:T-T+} and
  the semigroup property we deduce that, for $0\leq k\leq n-i$,
  \begin{multline*}%\label{eq:termInv}
    \left[(\sQ\sW^+)^k\sQ(\sW^-+\sW^+)\sK^{\rm d}(\sI+\sW^+)^{-1}\right]_{i,1}\\
    =\qquad\smashoperator{\sum_{i<a_1<\dots<a_k\leq n}}\qquad
    Q_{{i}}\cw_{i,{a_1}}Q_{{a_1}}\cw_{{a_{1}},{a_2}} \dotsm
    Q_{{a_{k-1}}}\cw_{{a_{k-1}},{a_k}}Q_{{a_k}}\cw_{{a_k},{1}}K_{1},
  \end{multline*}
  while for $k>n-i$ the left-hand side above equals 0 (the case $k=0$ is interpreted as
  $Q_{i}\cw_{i,1}K_{1}$). Summing the above times $(-1)^k$ from $k=0$ to $k=n-i$ we get
  directly from the last formula and \eqref{eq:inv} that
  \begin{multline}
    \label{eq:preFinaldecsW}
    \left[(\sI+\sQ\sW^+)^{-1}\sQ(\sW^-+\sW^+)\sK^{\rm d}(\sI+\sW^+)^{-1}\right]_{i,j}\\
    =\uno{j=1}\left[Q_{i}\cw_{i,1}K_{1}+\sum_{k=1}^{n-i}(-1)^{k}\quad\smashoperator{\sum_{i<a_1<\dots<a_k\leq
        n}}\quad Q_{{i}}\cw_{i,{a_1}}Q_{{a_1}}\cw_{{a_{1}},{a_2}} \dotsm
    Q_{{a_{k-1}}}\cw_{{a_{k-1}},{a_k}}Q_{{a_k}}\cw_{{a_k},{1}}K_{1}\right].
  \end{multline}
  % and then post-multiplying by $(\sI+\sW^+)\sV'$ and referring back to \eqref{eq:decsW} we
  % obtain
  % \begin{multline}
  %   \label{eq:finaldecsW}
  %   \left[(\sI+\sWo)^{-1}\sWt\right]_{i,j}\\=V_iQ_i\cw_{i,j}K_jV_j'+\sum_{k=1}^{n-i}(-1)^{k}\quad\smashoperator{\sum_{i<a_1<\dots<a_k\leq
  %       n}}\quad V_{i}Q_{i}\cw_{{i},{{a_1}}}Q_{{a_1}}\cw_{{a_1},{a_2}}
  %   Q_{{a_{k-1}}}\cw_{{a_{k-1}},{a_k}}Q_{{a_k}}\cw_{{a_k},{j}}K_{j}V_{j}',
  % \end{multline}
  % where we have used the identity
  % $\cw_{{m},{1}}K_{1}\cw_{{1},{j}}=\cw_{{m},{j}}K_{j}$ from Assumptions
  % \ref{assum:1}(ii) and \ref{assum:1}(iii).
  %
  % The first term on the right side of \eqref{eq:finaldecsW} is trace class by Assumption
  % \ref{assum:2}(ii). On the other hand, since $V_{l}'V_lQ_l=Q_l$, each term in the last sum can be written as a
  % product of factors of the form $V_{i}Q_{i}\cw_{{i},{j}}V_{{j}}'$, times
  % $V_{{a_k}}Q_{{a_k}}\cw_{{a_k},{j}}K_{j}V_{j}'$.  Each factor is trace class by
  % Assumption \ref{assum:2}(ii), so $(\sI+\sWo)^{-1}\sWt$ as a whole is trace class.
  % \begin{equation}
  %   \|AB\|_1\leq\|A\|_{\rm op}\|B\|_1,\qquad\|AB\|_2\leq\|A\|_{\rm op}\|B\|_2,
  %   \qquad\|AB\|_1\leq\|A\|_1\|B\|_1,\label{eq:bdopnorm}
  % \end{equation}
  % with $\|\cdot\|_{\rm op}$ denoting the operator norm (see \cite{simon}) and
  % $\|P_x\|_{\rm op}=1$.
  %
  % Start by noting that
  % \[\tsWt\tsWo=\sV(\sI+\sW^+)(\sI+\sWo)^{-1}\sQ(\sW^-+\sW^+)\sK^{\rm d}(\sI+\sW^+)^{-1}\sV^{-1}\]
  Note that only the first column of the above matrix contains non-zero entries. Since
  $\sU(\sI+\sW^+)$ is upper triangular and $\sU'$ is diagonal, the same is true for
  $\tsWt\tsWo$. Pre-multiplying \eqref{eq:preFinaldecsW} by $\sU(\sI+\sW^+)$ we get
  \begin{equation}
    \big(\tsWt\tsWo\big)_{i,1}
    =\sum_{j=i}^n\sum_{k=0}^{n-j}(-1)^{k}\quad\smashoperator{\sum_{j=a_0<a_1<\dots<a_k\leq
        n}}\quad U_{i}\cw_{i,j}Q_{j}\cw_{{j},{{a_1}}}Q_{{a_1}}\cw_{{a_1},{a_2}}
    Q_{{a_{k-1}}}\cw_{{a_{k-1}},{a_k}}Q_{{a_k}}\cw_{{a_k},{1}}K_{1}U_{1}'.\label{eq:tswti1}
  \end{equation}
  By Lemma \ref{lem:alt} we deduce that
  \begin{equation}
    \label{eq:finishDetProof}
    \big(\tsWt\tsWo\big)_{i,1}=U_i\big[\cw_{i,1}K_{1}-\oQ_{i}\cw_{i,2}\oQ_{2}\dotsm\cw_{{n-1},n}\oQ_{n}
    \cw_{n,1}K_{1}\big]U_1'.
  \end{equation}
    By Assumption \ref{assum:2}(iii) this operator is trace class, which provides the needed
  justification for writing \eqref{eq:detFinalExt}, and then since only the first column
  of $\tsWt\tsWo$ is non-zero we deduce that
  \[\det\!\big(\sI-\sV\sK\sV'\big)=\det\!\Big(I-\big(\tsWt\tsWo\big)_{1,1}\Big)_{L^2(\rr)}.\]
  Setting $i=1$ in \eqref{eq:finishDetProof} yields the result.
\end{proof}

In order to finish the proof of Theorem \ref{thm:extendedToBVP} it remains to prove Lemma
\ref{lem:alt}.

\begin{proof}[Proof of Lemma \ref{lem:alt}]
  We start with the right-hand side of the identity. Replace each $Q_{i}$ by $I-\oQ_{i}$
  except for the first one to get
  \begin{equation*}
    \sum_{j=i}^n\sum_{k=0}^{n-j}(-1)^{k}\sum_{m=0}^{k}\binom{n-j-m}{k-m}(-1)^m\quad\smashoperator{\sum_{j=b_0<b_1<\dots<b_m\leq n}}\quad
    \cw_{{i},{b_0}}Q_{{b_0}}\cw_{{b_0},{b_1}}
    \dotsm\oQ_{{b_{m-1}}}\cw_{{b_{m-1}},{b_m}}\oQ_{{b_m}}\cw_{{b_m},{1}}K_{1}
  \end{equation*}
  where, as in the above proof, we have written $i$ instead of $t_i$ in the subscripts.
  % Here, for the terms with $k=0$ (and hence $m=0$) the last sum has only one term which
  % is
  % interpreted as being $U_i\cw_{i,j}Q_j\cw_{j,1}K_1U_1'$.
  % We focus on the term between the first and last sums and interchange the other of
  % summation to get, for $1\leq j\leq n$
  % \begin{align*}
  %   \sum_{k=0}^{n-j}(-1)^{k}\sum_{m=0}^{k}&\binom{n-j-m}{k-m}(-1)^m\\
  %   &=\uno{m=0}\sum_{k=0}^{n-j}\binom{n-j+1}{k+1}(-1)^k~+~\sum_{m=1}^{n-j+1}\sum_{k=m-1}^{n-j}
  %   \binom{n-j+1-m}{k+1-m}(-1)^{m+k}\\
  %   &=\uno{m=0}+\uno{m=n-j+1},
  % \end{align*}
  % where the second inequality follows from the binomial theorem. Therefore
  Interchanging the order of summation leads to
  $\sum_{j=i}^n\sum_{m=0}^{n-j}\sum_{k=m}^{n-j}(-1)^{k+m}\binom{n-j-m}{k-m}(\star)$, where
  $(\star)$ represents the last sum above, and is independent of $k$. Noting that
  $\sum_{k=m}^{n-j}\binom{n-j-m}{k-m}(-1)^{k+m}=\uno{m=n-j}$, the above expression can be
  rewritten as
  \begin{align*}
    \sum_{j=i}^{n}\qquad&\quad\,\smashoperator{\sum_{j=b_0<b_1<\dots<b_{n-j}\leq n}}
    \quad \cw_{{i},{b_0}}Q_{{b_0}}\cw_{b_0,b_1}\oQ_{b_1}\dotsm\cw_{{b_{m-1}},{b_{n-j}}}\oQ_{{b_{n-j}}}\cw_{{b_{n-j}},{1}}K_1\\
    &=\sum_{j=i}^n\cw_{{i},{j}}(I-\oQ_{{j}})\cw_{{j},{j+1}}\oQ_{{j+1}}
    \dotsm\cw_{b_{n-1},b_{n}}\oQ_{{b_{n}}}\cw_{n,{1}}K_{1}\\
    &=\sum_{j=i}^{n}\Big[\cw_{{i},{j+1}}\oQ_{{j+1}}\cw_{{j+1},{j+2}}\oQ_{{j+2}}
    \dotsm\cw_{{b_{n-1}},{b_{n}}}\oQ_{{b_{n}}}\cw_{n,{1}}K_{1}\\
    &\hspace{2.3in}-\cw_{{i},{j}}\oQ_{{j}}\cw_{{j},{j+1}}\oQ_{{j+1}}
    \dotsm\cw_{{b_{n-1}},{b_{n}}}\oQ_{{b_{n}}}\cw_{n,{1}}K_{1}\Big]\\
    &=\cw_{i,1}K_{1}-\oQ_{i}\cw_{i,2}\oQ_{2}\dotsm\cw_{{n-1},n}\oQ_{n}
    \cw_{n,1}K_{1},
  \end{align*}
  where the last equality follows by telescoping.
\end{proof}

\section{A few examples}\label{sec:examples}
We will now show how to apply Theorem \ref{thm:extendedToBVP} to a few examples of
Fredholm determinants which arise in describing objects of interest in random matrix
theory, growth processes, particle systems, tilings and representation theory. Our
examples include extended determinantal point processes such as the stationary (GUE) Dyson
Brownian motion, the Airy$_2$ process, and the Pearcey process; all of which are limits of
ensembles of non-intersecting directed paths on weights graphs. We also include an
extended determinant point process given by Markov processes on partitions related to the
$z$-measures; this ensemble is {\it not} a limit of a graph-based ensemble of
non-intersecting directed paths. We also show how the identity applies to signed extended
determinantal point processes such as the Airy$_1$ and Airy$_{2\to 1}$ processes.

The proofs of the results in this section are postponed to the appendix.

%These include work on tilings of hexagons \cite{BGR,G} and of the Aztec diamond \cite{JAztec,BBDS}

%\bibitem{BGR}
%A. Borodin, V. Gorin, E. Rains.

%\bibitem{G}
%V. Gorin.

%\bibitem{JAztec}
%K. Johansson

%\bibitem{BBDS}
%J. Baik, A. Borodin, P. Deift, T. Suidan.

\subsection{Stationary (GUE) Dyson Brownian motion}\label{sec:gue}

Consider the eigenvalues of an $N\!\times\!N$ Hermitian matrix with each (algebraically
independent) entry diffusing according to a stationary Ornstein-Uhlenbeck process (real valued on the diagonal and complex valued off the diagonal). The eigenvalues of this process are real valued and themselves form a Markov process, called the stationary Dyson Brownian motion. Its stationary marginal distribution is the $N\times N$ Gaussian Unitary Ensemble (GUE) eigenvalue
distribution. We will consider this Dyson Brownian motion process in stationarity and write the $i^{\rm{th}}$ largest eigenvalue at
time $t$ as $\lambda_N(i;t)$. The collection of eigenvalues at time $t$ is written $\lambda_{N}(\cdot;t) = (\lambda_N(1;t),\dots,\lambda_N(N;t))$ and the curve traced out by the $i^{\rm{th}}$ eigenvalue over time is written as $\lambda_N(i;\cdot)$. Then the graphs of $\lambda_N(1;\cdot),\ldots, \lambda_N(N;\cdot)$ form an ensemble of non-intersecting
curves (see, for example, Section 4.3.1 of \cite{AGZ}). This ensemble of curves is indexed
by time $t$ and curve label $i$ and hence can be thought of as a random variable taking
values in the space of continuous curves from $\{1,\ldots,N\}\times \R$ to $\R$. We will
write $\EE$ as the expectation operator for this random variable.

\begin{definition}\label{qdef}
For times $t_1<t_2<\cdots<t_n$ consider functions $q_{t_i}:\R\to\rr$
and let $\bar{q}_{t_i}(x)= 1-q_{t_i}(x)$. For a curve $g:\R\to
\R$ define the functional $\bar{q}$ by $\bar{q}(g) = \prod_{i=1}^{n}
\bar{q}_{t_i}(g(t_i))$. One likewise defines the functional $q(g)=\prod_{i=1}^{n}
q_{t_i}(g(t_i))$.
\end{definition}

The stationary (GUE) Dyson Brownian motion is an extended determinantal point process. In particular this means that for any functions $q_{t_i}$ (as above),
\begin{equation}\label{GUEdpp}
  \EE\!\left[ \prod_{j=1}^{N} \bar{q}(\lambda_N(j;\cdot))\right] = \det\!\big(I-Q\KHrmext\big)_{L^2(\{t_1,\dots,t_n\}\times\rr)}
\end{equation}
as long as both sides are well-defined, where $Q$ is defined as in \eqref{eq:defQ} and
$\KHrmext$ is the \emph{extended Hermite kernel} (see e.g. \cite{tracyWidom-Diff}):
\[\KHrmext(s,x;t,y)=
\begin{dcases*}
  \sum_{k=0}^{N-1}e^{k(s-t)}\varphi_k(x)\varphi_k(y) & if $s\geq t$,\\
  -\sum_{k=N}^\infty e^{k(s-t)}\varphi_k(x)\varphi_k(y) & if $s<t$.
\end{dcases*}\]
Here $\varphi_k(x)=e^{-x^2/2}p_k(x)$ and $p_k$ is the $k$-th normalized Hermite polynomial
(so that $\|\varphi_k\|_2=1$).

Writing \[D=-\tfrac12\big(\Delta-x^2+1\big),\] the \emph{harmonic
  oscillator functions} $\varphi_k$ satisfy $D\varphi_k=k\varphi_k$. Then the
\emph{Hermite kernel}
\[\KHrm(x,y)=\KHrm(0,x;0,y)=\sum_{k=0}^{N-1}\varphi_k(x)\varphi_k(y)\]
acts as the projection operator onto span$\{\varphi_0,\dotsc,\varphi_{N-1}\}$. In the notation of Theorem
\ref{thm:extendedToBVP} we are taking $X=\rr$, $\mu$ the Lebesgue measure, and for $1\leq
i<j\leq n$
\begin{gather*}
  \cw_{t_i,t_j}(x,y)=e^{-(t_j-t_i)D}(x,y)=\sum_{k=0}^\infty e^{-(t_j-t_i)k}\varphi_k(x)\varphi_k(y),\qquad
  K_{t_i}=\KHrm,\\
  \cw_{t_j,t_i}K_{t_i}(x,y)=e^{(t_j-t_i)D}\KHrm(x,y)=\sum_{k=0}^{N-1}e^{(t_j-t_i)k}\varphi_k(x)\varphi_k(y).
\end{gather*}
% \begin{remark}\label{rem:GUEeasy}
%   It is easy to check that all these operators map $L^2(\rr)$ to itself. Nevertheless, in
%   order to handle unbounded $q_{t_i}$'s, we will need to introduce an auxiliary space
%   $\cv_2$, see the proof of Corollary \ref{cor:gue}. On the other hand, if we
%   restrict the $q_{t_i}$'s to be of the form $q_{t_i}(x)=\uno{x\geq a_i}$ (which
%   corresponds to studying the finite dimensional distributions of the top line of this
%   ensemble of curves), then we can simply take $\cv_2=L^2(\rr)$.
% \end{remark}
% , and thus in this
% case we may take $\cv_2=L^2(\rr)$. % From this we see that $\KHrmext$ corresponds to the
% extended kernel defined by \eqref{eq:generalExt} for $\cw_{t_i,t_j}:=e^{(t_i-t_j)D}$ and
% $K_{t_i}:=\KHrm$, and satisfies Assumptions \ref{assum:1}. Thus we may
Applying Theorem \ref{thm:extendedToBVP} we conclude:

\begin{corollary}\label{cor:gue}
  Fix $t_1<\dots<t_n$ and write $\tau=\min_{i=1,\dotsc,n-1}|t_{i+1}-t_i|$. For each $1\leq
  i\leq n$ choose a function $q_{t_i}\in L^1_{\rm loc}(\R)$ satisfying
  $\sup_{x\in\rr}e^{-\kappa x^2}|q_{t_i}(x)|<\infty$ for some
  $\kappa\in(0,\frac1{2\sqrt{2}}\tanh(\tau/\sqrt{2}))$. Then
%$\sum_{k=0}^\infty e^{-2\tau k}\|q_{t_i}\varphi_k\|_2^2<\infty$ for
%$\tau=\min_{i=1,\dotsc,n-1}\{t_{i+1}-t_i\}$ we have
%$\sup_{x\in\rr}\{e^{-rx^2}q_{t_i}(x)\}<\infty$ for some $r\in[0,\frac12)$ we have
\begin{equation*}
\EE\!\left[ \prod_{j=1}^{N} q(\lambda^N_j)\right]
  =\det\!\big(I-\KHrm+ Q_{t_1}e^{(t_1-t_2)D} Q_{t_2}\dotsm Q_{t_n}
  e^{(t_n-t_1)D}\KHrm\big)_{L^2(\rr)}.
\end{equation*}
\end{corollary}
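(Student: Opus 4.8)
The plan is to deduce the corollary from the general identity of Theorem \ref{thm:extendedToBVP} applied to the extended determinantal structure \eqref{GUEdpp} of the stationary Dyson Brownian motion, after interchanging the roles of $q$ and $\bar q$. Concretely, I would keep the identifications recorded just before the statement, namely $X=\rr$ with Lebesgue measure, $K_{t_i}=\KHrm$, $\cw_{t_i,t_j}=e^{-(t_j-t_i)D}$ for $i<j$, and $\cw_{t_j,t_i}K_{t_i}=e^{(t_j-t_i)D}\KHrm$ as the right-inverse on the range of $\KHrm$. Since the corollary concerns the functional $q$ rather than the functional $\bar q$ appearing in \eqref{GUEdpp}, I would apply \eqref{GUEdpp} to the modified functions $\hat q_{t_i}:=\bar q_{t_i}=1-q_{t_i}$; because $\overline{\hat q}_{t_i}=q_{t_i}$, the left-hand side becomes $\EE[\prod_j q(\lambda_N(j;\cdot))]$ and the right-hand side is $\det(I-\hat Q\,\KHrmext)_{L^2(\{t_1,\dots,t_n\}\times\rr)}$, where $\hat Q$ is the diagonal operator with entries multiplication by $\hat q_{t_i}=\oQ_{t_i}$. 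Feeding this into Theorem \ref{thm:extendedToBVP} with the theorem's $Q$ taken to be $\hat Q$, its complement $I-\hat Q_{t_i}=Q_{t_i}$ appears on the right, while the right-inverse convention turns $\cw_{t_n,t_1}K_{t_1}$ into $e^{(t_n-t_1)D}\KHrm$; this reproduces exactly the claimed path-integral kernel. Note also that the swap is harmless for the hypotheses, since $\bar q_{t_i}=1-q_{t_i}$ satisfies the same Gaussian growth bound as $q_{t_i}$ (the constant function is bounded).

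It then remains to verify Assumptions \ref{assum:0}, \ref{assum:1} and \ref{assum:2} for this data. Assumption \ref{assum:1} is purely algebraic and I expect it to be immediate: the semigroup property is the composition law $e^{-sD}e^{-s'D}=e^{-(s+s')D}$; since here $K_{t_i}=K_{t_j}=\KHrm$ is the spectral projection of $D$ onto span$\{\varphi_0,\dots,\varphi_{N-1}\}$, it commutes with every function of $D$, which yields the reversibility relation; and right-invertibility holds because $e^{(t_j-t_i)D}$ is well defined on the finite-dimensional range of $\KHrm$ and inverts $e^{-(t_j-t_i)D}$ there.

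The analytic content, Assumptions \ref{assum:0} and \ref{assum:2}, is where the real work lies and is the main obstacle. Here I would use the explicit Mehler kernel for $e^{-sD}$, which has Gaussian decay in both arguments at a rate governed by hyperbolic functions of $s$; the worst case over consecutive gaps is controlled by $\tau=\min_i|t_{i+1}-t_i|$. The growth hypothesis $\sup_x e^{-\kappa x^2}|q_{t_i}(x)|<\infty$ lets one dominate $Q_{t_i}$ by multiplication by the Gaussian weight $e^{\kappa x^2}$, and the purpose of the bound $\kappa<\frac{1}{2\sqrt2}\tanh(\tau/\sqrt2)$ is precisely that this growth is strictly beaten by the Mehler decay, so that the relevant products are bounded and, after conjugation, trace class. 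The cleanest route is to verify the hypotheses of Remark \ref{rem:assum2} by choosing $\wt V_{t_i}$ and $\wt V_{t_i}'$ to be multiplication by suitable Gaussians $e^{\mp\beta x^2}$, with $\beta$ chosen between $\kappa$ and the Mehler rate, so that each conjugated operator $\wt V_{t_i}Q_{t_i}\cw_{t_i,t_j}\wt V_{t_j}'$, and likewise those involving $K_{t_j}=\KHrm$, has an integrable Gaussian kernel and is therefore Hilbert--Schmidt; composing two such factors gives the trace-class property, and Remark \ref{rem:assum2} then supplies Assumption \ref{assum:2} with $U_{t_i}=V_{t_i}'=\wt V_{t_i}'$ and $U_{t_i}'=V_{t_i}=\wt V_{t_i}$. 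The delicate step is the bookkeeping of the Gaussian exponents across all the gaps, ensuring that a single admissible choice of $\beta$ works simultaneously for every pair $i<j$ under the stated constraint on $\kappa$; this estimate, which I expect to relegate to the appendix, is the technical heart of the argument, whereas everything else is an application of Theorem \ref{thm:extendedToBVP}.
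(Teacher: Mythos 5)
Your proposal is correct, and at the structural level it coincides with the paper's proof: the reduction via \eqref{GUEdpp} with the roles of $q$ and $\bar q$ swapped (the paper states the corollary with the bars removed and then notes in its proof that it is the barred operators $\oQ_{t_i}$ that must be estimated), the application of Theorem \ref{thm:extendedToBVP}, and the spectral-projection argument for Assumption \ref{assum:1} all match. Where you genuinely diverge is in the analytic verification. The paper takes $V_{t_i}=V_{t_i}'=U_{t_i}=U_{t_i}'=I$ --- no Gaussian conjugation at all --- so that Assumption \ref{assum:0} is subsumed by Assumption \ref{assum:2}; it then handles every operator involving $\KHrm$ through the finite-rank decomposition $\KHrm=\sum_{k=0}^{N-1}\varphi_k\otimes\varphi_k$ (trace norms controlled by $\|\bar q_{t_i}\varphi_k\|_2$, finite since $\kappa<\tfrac12$), and treats $\oQ_{t_i}e^{-(t_j-t_i)D}$ by splitting the semigroup in half, $\|\oQ_{t_i}e^{-2tD}\|_1\leq\|\oQ_{t_i}e^{-tD}\|_2\,\|e^{-tD}\|_2$, bounding the Hilbert--Schmidt norm not via the closed-form Mehler kernel but via the Feynman--Kac representation and the Brownian functional formula (1.9.3) of \cite{handbookBM}; that computation is precisely the origin of the threshold $\frac1{2\sqrt2}\tanh(\tau/\sqrt2)$. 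Your route --- explicit Mehler Gaussian decay plus conjugators $e^{\mp\beta x^2}$ fed into Remark \ref{rem:assum2} --- also closes: taking for instance $\beta=\kappa$, the negative-definiteness of the relevant quadratic forms (including the halved semigroups needed for your ``compose two Hilbert--Schmidt factors'' step) reduces to $\kappa<\tfrac12\tanh(s/2)$ uniformly over gaps $s\geq\tau$, which follows from the stated hypothesis because $\frac1{2\sqrt2}\tanh(\tau/\sqrt2)<\tfrac12\tanh(\tau/2)$ for all $\tau>0$, so a single admissible $\beta$ does exist. Two caveats: with non-identity conjugators, Assumption \ref{assum:0} is no longer automatically contained in Assumption \ref{assum:2} and must be checked separately (your same Mehler estimates with the weight dropped on one side supply it); and the conjugation is in fact superfluous in this example --- unlike the Airy$_2$ case, the unconjugated operators are already trace class under the stated growth condition, which is exactly why the paper can take all four conjugating families to be the identity.
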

Note that we have removed the bars over $q$ and $Q$ by replacing $q_{t_i}$'s by $(1-q_{t_i})$'s.

\subsubsection{Continuum statistics}\label{GUEContStat}

We may now take a continuous time limit of the above formula (in the style of
\cite{CQR}). Consider a function $h:\R\times \R \to [0,\infty]$ and $\ell<r$. Define an
operator $\Gamma^{h}_{\ell,r}$ acting on $L^2(\R)$ as follows: $\Gamma^{h}_{\ell,r}
f(\cdot) = u(r,\cdot)$, where $u(r,\cdot)$ is the solution at time $r$ of
\begin{equation}
\partial_t u =- Du - hu\label{eq:PDEGUE}%=\tfrac12\Delta u-(h+\tfrac12u-\tfrac12)
\end{equation}
with initial data $u(\ell,x)=f(x)$. By the Feynman-Kac formula we may also express
the action of this operator in terms of a path-integral through a potential $h$ as
\begin{equation}\label{eq:FK}
\Gamma^{h}_{\ell,r}f(x) =
\EE_{b(\ell)=x}\!\left[f(b(r))e^{-\frac12\int_\ell^{r}(2h(s,b(s))+b(s)^2-1)\hs ds}\right]
\end{equation}
where the expectation is over a (standard) Brownian motion $b(\cdot)$ started
at time $\ell$ with $b(\ell)=x$ and run until time $r$. %\dnote{Ivan: do you know of any good  reference for the F-K formula? The only one where I can find it stated simply is   Karatzas-Shreve... It still needs to be added.}
%\dnote{Ivan: I changed this a bit. First, I wrote it in terms of Brownian bridges by putting the $x^2-1$ term in the generator as part  of the potential. And second, I wrote it forward in time because that's how we wrote it   in CQR, making the comparison easier}

Let $t_1=\ell$, $t_n=r$ and the $t_i$ be spaced equally in
between with step size $\delta =(r-\ell)/(n-1)$. Then letting $q_{t_i}(x) =1- \delta h(t_i,x)$
and taking $n\to \infty$ the above formula yields:

\begin{proposition}\label{prop:contGUE}
For any interval $[\ell,r]$ and continuous bounded function $h:\R\times \R \to [0,\infty]$
\begin{equation}\label{continuumstatGUEN}
  \EE \!\left[\prod_{j=1}^{N}\exp\!\left( -\int_{\ell}^{r}\!h(t,\lambda_{N}(j;t))\,dt\right)\right]
  =\det\!\big(I-\KHrm+ \Gamma^{h}_{\ell,r}
  e^{(t_n-t_1)D}\KHrm\big)_{L^2(\rr)}.
\end{equation}
\end{proposition}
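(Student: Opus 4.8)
The plan is to obtain Proposition \ref{prop:contGUE} as the $n\to\infty$ limit of Corollary \ref{cor:gue}, taking $t_1=\ell$, $t_n=r$, equal spacing $\delta=(r-\ell)/(n-1)$, and $q_{t_i}(x)=1-\delta h(t_i,x)$. Since $h$ is bounded, say $0\le h\le M$, for $\delta<1/M$ each $q_{t_i}$ takes values in $[0,1]$ and in particular satisfies $\sup_x e^{-\kappa x^2}|q_{t_i}(x)|<\infty$ for every $\kappa>0$; choosing $\kappa$ inside the (shrinking but nonempty) window $\big(0,\tfrac1{2\sqrt2}\tanh(\delta/\sqrt2)\big)$ lets me invoke Corollary \ref{cor:gue} for each fixed $n$. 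It then remains to pass to the limit separately on the two sides of that identity.

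For the left-hand side, I would fix a realization of the eigenvalue curves, which are continuous in $t$. Writing $\log\prod_{i=1}^n\big(1-\delta h(t_i,\lambda_N(j;t_i))\big)=\sum_{i=1}^n\log\big(1-\delta h(t_i,\lambda_N(j;t_i))\big)$ and using $\log(1-u)=-u+O(u^2)$ together with the Riemann-sum convergence $\delta\sum_i h(t_i,\lambda_N(j;t_i))\to\int_\ell^r h(t,\lambda_N(j;t))\,dt$, each factor converges to $\exp\!\big(-\int_\ell^r h(t,\lambda_N(j;t))\,dt\big)$. The product over $j$ lies in $[0,1]$, so the bounded convergence theorem moves the limit inside $\EE$ and produces the left-hand side of \eqref{continuumstatGUEN}.

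For the right-hand side, I would write the operator inside the determinant as $I-\KHrm+B_n$ with $B_n=A_n\,e^{(r-\ell)D}\KHrm$ and $A_n=Q_{t_1}e^{-\delta D}Q_{t_2}e^{-\delta D}\dotsm e^{-\delta D}Q_{t_n}$, noting that $e^{(t_i-t_{i+1})D}=e^{-\delta D}$ and $e^{(t_n-t_1)D}=e^{(r-\ell)D}$. The key point is that $A_n$ is a Trotter-type discretization of the propagator $\Gamma^h_{\ell,r}$ from \eqref{eq:PDEGUE}. Expanding each $Q_{t_i}=I-\delta h(t_i,\cdot)$ and collecting terms by the subset $\{i_1<\dots<i_k\}$ of indices at which $-\delta h(t_{i_\bullet},\cdot)$ is chosen gives the finite sum
\[
A_n=\sum_{k=0}^{n}(-\delta)^{k}\sum_{1\le i_1<\dots<i_k\le n} e^{-(t_{i_1}-t_1)D}\,h(t_{i_1},\cdot)\,e^{-(t_{i_2}-t_{i_1})D}\dotsm h(t_{i_k},\cdot)\,e^{-(t_n-t_{i_k})D},
\]
which is exactly the Riemann-sum approximation of the Dyson (time-ordered) series for $\Gamma^h_{\ell,r}$. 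Since $\|e^{-sD}\|\le1$ and $\|h(t,\cdot)\|_\infty\le M$, the $k$-th term is bounded in operator norm by $M^k(r-\ell)^k/k!$ uniformly in $n$, so the series is dominated by $e^{M(r-\ell)}$; term-by-term Riemann-sum convergence (using strong continuity of $s\mapsto e^{-sD}$ and continuity of $h$) together with dominated convergence then gives $A_n\varphi\to\Gamma^h_{\ell,r}\varphi$ in $L^2$ for every fixed $\varphi$, the Feynman--Kac identity \eqref{eq:FK} confirming that the limit is the stated propagator. To turn this into convergence of the determinant I would exploit that $e^{(r-\ell)D}\KHrm$ has kernel $\sum_{k=0}^{N-1}e^{(r-\ell)k}\varphi_k(x)\varphi_k(y)$, a \emph{fixed} rank-$N$ operator with range in $\mathrm{span}\{\varphi_0,\dots,\varphi_{N-1}\}$ (well-defined even though $e^{(r-\ell)D}$ is unbounded on all of $L^2$). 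Hence $B_n$ has kernel $\sum_{k=0}^{N-1}e^{(r-\ell)k}(A_n\varphi_k)(x)\varphi_k(y)$, and since $A_n\varphi_k\to\Gamma^h_{\ell,r}\varphi_k$ in $L^2$ for the finitely many $k$, the rank-$\le N$ operators $B_n$ converge in trace norm to $\Gamma^h_{\ell,r}e^{(r-\ell)D}\KHrm$. Continuity of the Fredholm determinant with respect to $\|\cdot\|_1$ (with $\KHrm$ a fixed rank-$N$ projection) then yields $\det(I-\KHrm+B_n)_{L^2(\rr)}\to\det(I-\KHrm+\Gamma^h_{\ell,r}e^{(r-\ell)D}\KHrm)_{L^2(\rr)}$, the right-hand side of \eqref{continuumstatGUEN}.

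The main obstacle is this operator-convergence step: making rigorous that the discrete product $A_n$ converges to the PDE propagator $\Gamma^h_{\ell,r}$ and that the convergence is strong enough to control the Fredholm determinant, while simultaneously handling the unbounded factor $e^{(r-\ell)D}$. The finite-rank structure of $\KHrm$ is what rescues the argument, since it reduces the needed convergence to $L^2$-convergence of $A_n$ applied to the fixed Schwartz functions $\varphi_k$ and tames $e^{(r-\ell)D}$; the two approximations $I-\delta h\approx e^{-\delta h}$ and the splitting of $e^{-\delta D}$ against the potential must be controlled uniformly in $n$, which the Dyson-series domination above supplies.
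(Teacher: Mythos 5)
Your proof is correct, and while your treatment of the left-hand side (Riemann sums for $\log(1-\delta h)$ plus dominated convergence, after checking that the bounded functions $q_{t_i}=1-\delta h(t_i,\cdot)$ satisfy the Gaussian-growth hypothesis of Corollary \ref{cor:gue} for every $\kappa>0$) coincides with the paper's, your handling of the operator convergence on the right-hand side takes a genuinely different route. The paper never expands the discrete product $\Gamma^{h,n}_{\ell,r}=Q_{t_1}e^{(t_1-t_2)D}\dotsm e^{(t_{n-1}-t_n)D}Q_{t_n}$: instead it applies the Feynman--Kac formula on each interval $[t_i,t_{i+1}]$ to represent $\Gamma^{h,n}_{\ell,r}$ as a single Brownian-bridge expectation with weight $e^{\sum_i\log(1-\delta h(t_i,b(t_i)))}$ in place of $e^{-\int h}$, deduces pointwise convergence of the kernels to those of $\Gamma^h_{\ell,r}$ via \eqref{eq:FK}, and upgrades this to $\|(\Gamma^{h,n}_{\ell,r}-\Gamma^h_{\ell,r})M\|_2\to0$ for the weight $Mf(x)=(1+x^2)^{-1/2}f(x)$ using dominated convergence and explicit Gaussian estimates (formula (1.9.3) of \cite{handbookBM}); trace-norm convergence then follows from $\|AB\|_1\leq\|A\|_2\|B\|_2$ together with $\|M^{-1}e^{(r-\ell)D}\KHrm\|_2<\infty$. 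You instead expand the product into a time-ordered Dyson series, dominate it term by term in operator norm (note your bound $M^k(r-\ell)^k/k!$ should carry an innocuous factor like $(n/(n-1))^k$, since $\delta^k\binom{n}{k}\leq(\delta n)^k/k!$ with $\delta n=(r-\ell)n/(n-1)$), obtain strong convergence $A_n\varphi\to\Gamma^h_{\ell,r}\varphi$ from Riemann-sum convergence on the simplex, and exploit the fact that $e^{(r-\ell)D}\KHrm$ is a \emph{fixed} rank-$N$ operator to convert strong convergence on the $N$ vectors $\varphi_0,\dotsc,\varphi_{N-1}$ into trace-norm convergence of $B_n$; your identification of the limit is legitimate, as your Dyson series is exactly what one obtains by expanding $e^{-\int h}$ inside \eqref{eq:FK} and applying the Markov property (or, more directly, by noting both sides solve \eqref{eq:PDEGUE}). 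What each approach buys: yours is softer, requiring no probabilistic representation and no Gaussian integrals, with the unbounded factor $e^{(r-\ell)D}$ neutralized purely by finite rank --- but for that very reason it is tied to finite $N$, whereas the paper's weighted Hilbert--Schmidt scheme makes no use of finite rank and is the one that transfers to the Airy$_2$ continuum statement \eqref{continuumstatAiry}, where $\KAirytwo$ has infinite rank and your shortcut is unavailable.
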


\begin{remark}
  The condition on $h$ is not optimal, but it makes the arguments simpler. A different
  class of functions $h$ for which the result holds is the following. Fix a function $g\in
  H^1([\ell,r])$ and set $h(t,x)=0$ for $x<g(t)$ and infinity otherwise. Then the
  left-hand side of (\ref{continuumstatGUEN}) becomes $\PP\!\left[\bigcap_{j=1}^{N}
    \{\lambda_{N}(j;t)<g(t)\, \forall t\in [\ell,r]\}\right]$ and the right-hand side
  makes perfect sense as well, with $\Gamma^h_{\ell,r}$ now being the solution operator of
  a certain boundary operator involving $g$. This case corresponds to calculating the
  probability that on the entire interval $[\ell,r]$, the top curve of the Dyson Brownian
  motion remains below the function $g(t)$. This is the same type of result shown in
  \cite{CQR} for the Airy$_2$ process, and the proof for this case can be easily adapted
  from the arguments in that paper. %\note{should we mention other Brownian objects like     non-intersecting excursions...}
\end{remark}

\subsubsection{Rescaled process}\label{rescaledprocessSec}

Now introduce the rescaled process
\[\widetilde\lambda_N(i;t)=\sqrt{2}N^{1/6}\big(\lambda_N(i;N^{-1/3}t)-\sqrt{2N}\big).\]
Changing variables $x\mapsto \frac1{\sqrt{2}N^{1/6}}x+\sqrt{2N}$, $y\mapsto
\frac1{\sqrt{2}N^{1/6}}y+\sqrt{2N}$ in the kernel accordingly, we immediately obtain:

\begin{corollary}\label{thm:BVPforGUE-OU}
For any $t_1<\dots<t_n$ and functions $q_{t_i}:\R \to \R$, $1\leq i\leq n$, satisfying the same conditions as in
Corollary \ref{cor:gue}, we have
\begin{equation}\label{eqn:BVPforGUE-OU}
\EE\!\left[ \prod_{j=1}^{N} q(\widetilde\lambda_N(j;\cdot))\right] =\det\!\big(I-\widetildeKHrm+Q_{t_1}e^{(t_1-t_2)H_N} Q_{t_2}\dotsm Q_{t_n}
  e^{(t_n-t_1)H_N} \widetildeKHrm\big)_{L^2(\rr)}.
\end{equation}
where the kernel of $\widetildeKHrm$ is given by
\[\widetildeKHrm(x,y)=\frac1{\sqrt{2}N^{1/6}}\KHrm\!\left(\frac{x}{\sqrt{2}N^{1/6}}+\sqrt{2N},
\frac{y}{\sqrt{2}N^{1/6}}+\sqrt{2N}\right)\]
and the operator
\[H_N=-\Delta+x+\frac{x^2}{2N^{2/3}}.\]
\end{corollary}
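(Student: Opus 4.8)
The plan is to derive the statement directly from Corollary~\ref{cor:gue} by a change of variables in the Fredholm determinant, exactly as the sentence preceding the statement suggests. The only genuine input is the behaviour of the harmonic--oscillator semigroup $e^{sD}$ under the affine edge rescaling; everything else is bookkeeping.

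First I would fix the affine map $\phi(u)=\tfrac{1}{\sqrt2\,N^{1/6}}u+\sqrt{2N}$, so that $\phi^{-1}(x)=\sqrt2\,N^{1/6}(x-\sqrt{2N})$ is precisely the spatial part of the rescaling, and set $s_i=N^{-1/3}t_i$ (note $s_1<\dots<s_n$). Writing $\tilde q_{s_i}=q_{t_i}\circ\phi^{-1}$ and letting $\tilde Q_{s_i}$ denote multiplication by $\tilde q_{s_i}$, the identity $\widetilde\lambda_N(j;t_i)=\phi^{-1}(\lambda_N(j;s_i))$ gives $q_{t_i}(\widetilde\lambda_N(j;t_i))=\tilde q_{s_i}(\lambda_N(j;s_i))$, so the left-hand side of \eqref{eqn:BVPforGUE-OU} is exactly $\EE[\prod_{j}\tilde q(\lambda_N(j;\cdot))]$ for the functional built from the $\tilde q_{s_i}$ at the times $s_i$. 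Applying Corollary~\ref{cor:gue} at those times (after the hypothesis check noted below) rewrites this as $\det(I-M)_{L^2(\rr)}$ with $M=\KHrm-\tilde Q_{s_1}e^{(s_1-s_2)D}\tilde Q_{s_2}\dotsm\tilde Q_{s_n}e^{(s_n-s_1)D}\KHrm$.

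Next I would transport this determinant to the rescaled coordinate via $x=\phi(u)$. For an integral kernel $A(x,y)$ the substitution turns $\det(I-A)_{L^2(\rr,dx)}$ into $\det(I-\tilde A)_{L^2(\rr,du)}$ with $\tilde A(u,v)=A(\phi(u),\phi(v))\,\phi'(v)$; this follows term by term from \eqref{eq:fredholm} by multilinearity of the determinant in its columns, and since $A\mapsto\tilde A$ is the conjugation $\Lambda^{-1}A\Lambda$ with $(\Lambda g)(u)=g(\phi^{-1}(u))$ it is multiplicative and may be applied factor by factor to $M$. Three identifications finish the computation: (i) $\widetilde{\KHrm}(u,v)=\KHrm(\phi(u),\phi(v))\phi'(v)$ is by definition the kernel $\widetildeKHrm$, since $\phi'(v)=\tfrac{1}{\sqrt2 N^{1/6}}$; (ii) a multiplication operator $M_g$ transforms into $M_{g\circ\phi}$, so $\tilde Q_{s_i}$ becomes multiplication by $\tilde q_{s_i}\circ\phi=q_{t_i}$, i.e.\ $Q_{t_i}$; and (iii) $e^{(s_i-s_j)D}$ becomes $e^{(s_i-s_j)\Lambda^{-1}D\Lambda}$.

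The heart of the argument, and the step I expect to be the main obstacle, is identifying the conjugated generator. Substituting $x=\phi(u)$ and $\partial_x=\phi'(u)^{-1}\partial_u$ into $D=-\tfrac12\Delta+\tfrac12x^2-\tfrac12$ produces a Schr\"odinger-type operator $\Lambda^{-1}D\Lambda=N^{1/3}H_N+c_N$ for a constant $c_N$, where the $-\Delta$ and linear terms come out with the correct $N^{1/3}$ prefactor and the quadratic term reproduces the $O(N^{-2/3})x^2$ correction in $H_N$; this is exactly where the powers of $N$ in the rescaling must be tracked with care. Since $s_i-s_j=N^{-1/3}(t_i-t_j)$, this yields $e^{(s_i-s_j)\Lambda^{-1}D\Lambda}=e^{(t_i-t_j)H_N}e^{(s_i-s_j)c_N}$, and the scalar factors cancel because the exponents of the cyclic product obey $(t_1-t_2)+\dots+(t_n-t_1)=0$, so the additive constant $c_N$ is irrelevant. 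Assembling (i)--(iii) turns $\tilde M$ into the operator on the right-hand side of \eqref{eqn:BVPforGUE-OU}. The one remaining point is that the growth hypothesis of Corollary~\ref{cor:gue} holds at the times $s_i$: composition with the affine $\phi^{-1}$ sends Gaussian-type bounds to Gaussian-type bounds, so the condition on the $q_{t_i}$ transfers to the $\tilde q_{s_i}$ and keeps both determinants well-defined; I would treat this as a routine part of the assumption check.
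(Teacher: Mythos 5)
Your outline is exactly the paper's proof---the paper disposes of this corollary with the single sentence that one changes variables $x\mapsto \frac{1}{\sqrt2 N^{1/6}}x+\sqrt{2N}$ in the kernel ``accordingly''---and your bookkeeping is all correct: invariance of the Fredholm expansion \eqref{eq:fredholm} under the affine similarity transform with the Jacobian $\phi'(v)$, multiplicativity of the conjugation, the transformation of $\tilde Q_{s_i}$ into $Q_{t_i}$, and the cancellation of the scalar factors $e^{(s_i-s_j)c_N}$ around the cycle because the time increments sum to zero. But the one step you yourself flag as the heart of the argument you then settle by assertion, and the assertion fails as stated. Carrying out the substitution with $D=-\tfrac12\Delta+\tfrac12 x^2-\tfrac12$ and $\phi(u)=\frac{u}{\sqrt2 N^{1/6}}+\sqrt{2N}$ gives $-\tfrac12\Delta_x\mapsto -N^{1/3}\Delta_u$ and $\tfrac12\phi(u)^2-\tfrac12=\frac{u^2}{4N^{1/3}}+N^{1/3}u+N-\tfrac12$, so that
\[
\Lambda^{-1}D\Lambda=N^{1/3}\Big(-\Delta+x+\frac{x^2}{4N^{2/3}}\Big)+N-\tfrac12,
\]
i.e.\ the quadratic correction produced by the rescaling is $\frac{x^2}{4N^{2/3}}$, not the $\frac{x^2}{2N^{2/3}}$ printed in the statement. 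The coefficient $\tfrac14$ is forced: completing the square shows that $-\Delta+x+\frac{x^2}{4N^{2/3}}$ has eigenvalues $\frac{k-N+\frac12}{N^{1/3}}$, $k\geq0$, on the rescaled harmonic oscillator functions, which is exactly the transported spectrum of $N^{-1/3}D$ up to the additive constant; with coefficient $\tfrac12$ the level spacing would be $\sqrt2\,N^{-1/3}$ instead of $N^{-1/3}$, so no choice of the constant $c_N$ can make your claimed identity $\Lambda^{-1}D\Lambda=N^{1/3}H_N+c_N$ hold, and \eqref{eqn:BVPforGUE-OU} with the printed $H_N$ is genuinely false at finite $N$ for generic $q_{t_i}$. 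In other words the printed $H_N$ contains a typo, and your proof, rather than detecting it, asserts agreement at precisely the point where you promised care; a complete argument must actually perform this two-line computation and either state the corrected $H_N$ or flag the discrepancy.

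A lesser but real issue is the hypothesis transfer, which you declare routine. It is not scale-invariant: if $|q_{t_i}(u)|\le Ce^{\kappa u^2}$, then since $\phi^{-1}(x)=\sqrt2 N^{1/6}(x-\sqrt{2N})$ the function $\tilde q_{s_i}=q_{t_i}\circ\phi^{-1}$ grows like $e^{2\kappa N^{1/3}(x-\sqrt{2N})^2}$, a Gaussian rate of order $\kappa N^{1/3}$, whereas the admissible rate in Corollary \ref{cor:gue} at the rescaled times is of order $\tau N^{-1/3}$ (because $\tau_s=N^{-1/3}\tau$ and $\frac1{2\sqrt2}\tanh(\tau_s/\sqrt2)\approx\frac\tau4 N^{-1/3}$). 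So reading ``the same conditions'' at the original times does not let you invoke Corollary \ref{cor:gue} after rescaling once $N$ is large; the growth condition must be imposed so that the $\tilde q_{s_i}$ satisfy the hypotheses in the unscaled variables---presumably what the statement intends---and your one-line justification (``affine maps send Gaussian-type bounds to Gaussian-type bounds'') misses exactly the $N$-dependence of both the rate and the threshold.
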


The above rescaling corresponds to focusing in on the top curves of the Dyson Brownian
motion. In the limit $N$ goes to infinity, $\widetildeKHrm$ converges to the Airy$_2$
kernel $\KAirytwo$ and $H_N$ converges to the Airy Hamiltonian $H$ (defined in the
Introduction and below in Section \ref{Airy2Sec}).  So in the limit as $N$ goes to
infinity we recover the formula for the Airy$_2$ process as expected. The operator in the
Fredholm determinant in the right-hand side of (\ref{eqn:BVPforGUE-OU}) converges in trace
class to the corresponding one with $\KAirytwo$ and $H$, which means that all of the
left-hand side probabilities have limits. This can certainly be proved under some
additional (though not optimal) assumptions on the $q_{t_i}$ as in Corollary
\ref{Airytwobddform}, but we choose to treat the Airy$_2$ process independently.

%\note{One can of course take a continuum limit. Then it would be nice to have an explicit ``reflection principle'' for $e^{-tH_N}$ so one would get finite $N$ version of the GOE   result for Airy$_2$. It is not too difficult to get rid of the linear part of the potential using Cameron-Martin, but I'm not sure if it can be done for the quadratic part.}

% then the top line has the following joint distribution:
%\[\pp\big(\lambda^N_N(t_1)\leq x_1,\dots,\lambda^N_N(t_n)\leq x_n\big)
%=\det\!\big(I-L^2(X)i^{1/2}K_{\rm Hrm}^NL^2(X)i^{1/2}\big)_{L^2(\{t_1,\dots,t_n\}\times\rr)},\]
%with $\chi$ as \eqref{eq:fdL},

\subsection{The Airy$_2$ line ensemble}\label{Airy2Sec}

The multi-layer Airy$_2$ process \cite{PS, PNGJ} is the limit of the stationary (GUE)
Dyson Brownian motion under the scaling of Section \ref{rescaledprocessSec}. In
particular for $t\in \R$ consider the point process corresponding to $\left\{\widetilde\lambda_N(i;t): 1\leq i\leq N\right\}$. As $N$ goes to infinity, this point process converges in the vague topology to a limiting point process with an infinite number of simple points which we write as $\left\{\aip(i;t):i\in \Z_{\geq 1}\right\}$ (labeled so that $\aip(i;t)>\aip(j;t)$ for $i<j$). This convergence can be strengthened so that for any fixed set $t_1<t_2<\cdots<t_n$, the $n$-tuple of $\widetilde{\lambda}$-point processes has a limit $\left\{\aip(i;t):i\in \Z_{\geq 1}, t\in \{t_1,\ldots, t_n\}\right\}$. This limiting
collection of point processes is consistent and can be completed to a point process valued
stochastic process indexed by $t\in \R$. This process is called the multi-layer Airy$_2$
process. As it is the limit of a stationary (in $t$) process, it is also stationary.

There exists a continuous version of this process \cite{CH} so that $\aip$ can be thought
of as a random variable taking values in the space of $\Z_{\geq 1}$ indexed, continuous and
non-intersecting curves from $\R$ to $\R$. The convention is that $\aip(1;\cdot)$
represents the top curve (i.e., the limit of $\widetilde\lambda_N(1;\cdot)$). The
continuous version of the multi-layer Airy$_2$ process is called the Airy$_2$ line ensemble.

Since the Dyson Brownian motion was an extended determinantal point process (\ref{GUEdpp}),
so too is the multi-layer Airy$_2$ process. Analogous to (\ref{GUEdpp}), and with the
functional $\bar{q}$ given in Definition \ref{qdef} and operator $Q$ given in (\ref{eq:defQ}),
\begin{equation}\label{Airydpp}
\EE\!\left[ \prod_{j=1}^{\infty} \bar{q}(\aip(j;\cdot))\right] = \det\!\big(I-Q\KAirytwoext\big)_{L^2(\{t_1,\dots,t_n\}\times\rr)}
\end{equation}
where $\KAirytwoext$ is the \emph{extended Airy$_2$ kernel}
\begin{equation}
\KAirytwoext(s,x;t,y)=
\begin{dcases*}
\int_0^{\infty} d\lambda\,e^{-\lambda(s-t)} \Ai(x+\lambda)\Ai(y+\lambda) & if $s\geq t$,\\
-\int_{-\infty}^{0} d\lambda\,e^{-\lambda(s-t)} \Ai(x+\lambda)\Ai(y+\lambda) & if $s<t$,
\end{dcases*}\label{eq:KAi2}
\end{equation}
and $\Ai(\cdot)$ is the Airy function. In order for the above expectation to make sense,
one has to impose conditions on the functions $q_{t_i}$, such as in Corollary
\ref{Airytwobddform}.

To put this example in the setting of Theorem \ref{thm:extendedToBVP} we take $X=\rr$,
$\mu$ the Lebesgue measure, and consider the \emph{Airy Hamiltonian} defined
as \[H=-\Delta +x.\] $H$ has the shifted Airy functions $\Ai_\lambda(x)=\Ai(x-\lambda)$
as its generalized eigenfunctions: $H\!\Ai_\lambda(x) = \lambda\!\Ai_\lambda(x)$. Define
the Airy$_2$ kernel $\KAirytwo$ as the projection of $H$ onto its negative generalized
eigenspace:
\[\KAirytwo(x,y)=\int_0^{\infty} d\lambda \Ai(x+\lambda)\Ai(y+\lambda).\]
Then it is not hard to check that, in the notation of Theorem \ref{thm:extendedToBVP},
\eqref{eq:KAi2} corresponds to taking, for $1\leq i<j\leq n$,
\begin{gather*}
  \cw_{t_i,t_j}(x,y)=e^{-(t_j-t_i)H}(x,y)=\int_{-\infty}^{\infty} d\lambda\,e^{\lambda(t_j-t_i)} \Ai(x+\lambda)\Ai(y+\lambda),\qquad
  K_{t_i}=\KAirytwo,\\
  \cw_{t_j,t_i}K_{t_i}(x,y)=e^{(t_j-t_i)H}\KAirytwo(x,y)=\int_0^\infty d\lambda\,e^{-\lambda(t_j-t_i)}\Ai(x+\lambda)\Ai(y+\lambda).
\end{gather*}
Note that $\cw_{t_i,t_j}$ is only well-defined on the range of $\KAirytwo$. Applying
Theorem \ref{thm:extendedToBVP} allows to conclude:

\begin{corollary}\label{Airytwobddform}
  Fix $t_1<\dots<t_n$ and let $\tau=\min_{i=1,\dotsc,n-1}\{|t_{i+1}-t_i|\}$. Choose
  functions $q_{t_i}\in L^1_{\rm loc}(\R)$, $1\leq i\leq n$, such that
  $\sup_{x\geq0}e^{-rx}|q_{t_i}(x)|<\infty$ for some $0<r<\tau$ and
  $\sup_{x<0}\varphi(x)|1-q_{t_i}(x)|<\infty$ for some function $\varphi(x)$ such that
  $\int_{-\infty}^0 dx\,e^{-2(t_n-t_1) x}\varphi(x)^{-2}<\infty$, $1\leq i\leq n$.
  % there are positive functions $\phi_i$ such that
  % \[\int_{-\infty}^\infty dx\,\phi_i(x)^2e^{-tx}<\infty\quad\forall\,t>0\qqand
  % \sup_{x\in\rr}\phi_i(x)^{-2}|1-q_{t_i}(x)|<\infty.\]
  Then
  \begin{equation*}
    \EE\!\left[ \prod_{j=1}^{\infty} q(\aip(j;\cdot))\right]
    =\det\!\big(I-\KAirytwo+ Q_{t_1}e^{(t_1-t_2)H} Q_{t_2}\dotsm Q_{t_n}
    e^{(t_n-t_1)H}\KAirytwo\big)_{L^2(\rr)}.
  \end{equation*}
\end{corollary}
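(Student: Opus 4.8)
The plan is to apply Theorem \ref{thm:extendedToBVP} with the identifications made immediately above the statement: $X=\rr$, $\mu$ Lebesgue measure, $\cw_{t_i,t_j}=e^{-(t_j-t_i)H}$, $K_{t_i}=\KAirytwo$, $\cw_{t_j,t_i}K_{t_i}=e^{(t_j-t_i)H}\KAirytwo$, and $Q_{t_i}$ the multiplication operator by $q_{t_i}$. Once the three Assumptions are verified, the conclusion is exactly \eqref{eq:extToBVP}, and the stated form follows by replacing each $q_{t_i}$ by $1-q_{t_i}$: this interchanges $Q_{t_i}$ and $\oQ_{t_i}$, turns the left-hand side of \eqref{Airydpp} into $\EE[\prod_j q(\aip(j;\cdot))]$, and turns the path-integral side into the displayed one with all bars removed. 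So the real content is checking the hypotheses.

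First I would dispatch the purely algebraic Assumption \ref{assum:1}, all three parts of which follow from the spectral picture in which $\KAirytwo$ is the orthogonal projection onto the negative generalized eigenspace of $H$ and $\{e^{-sH}\}$ is the associated semigroup. The semigroup property (ii) is immediate from $e^{-sH}e^{-s'H}=e^{-(s+s')H}$; the reversibility relation (iii), $e^{-(t_j-t_i)H}\KAirytwo=\KAirytwo e^{-(t_j-t_i)H}$, holds because $\KAirytwo$ is a function of $H$ and hence commutes with $e^{-sH}$; and right-invertibility (i), $e^{-(t_j-t_i)H}e^{(t_j-t_i)H}\KAirytwo=\KAirytwo$, holds because the two exponentials are genuine mutual inverses on the range of $\KAirytwo$ in the spectral representation. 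Each of these can be made rigorous by testing against the shifted Airy functions $\Ai_\lambda$.

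The substantive work is the analytic Assumptions \ref{assum:0} and \ref{assum:2}. For the latter I would invoke the criterion of Remark \ref{rem:assum2}: it suffices to produce multiplication operators $\wt V_{t_i},\wt V'_{t_i}$ with $\wt V'_{t_i}\wt V_{t_i}Q_{t_i}=Q_{t_i}$ and $K_{t_i}\wt V_{t_i}\wt V'_{t_i}=K_{t_i}$, such that the four conjugated operators in part (ii) are trace class. The guiding principle is that the two growth conditions on $q_{t_i}$ govern the two half-lines separately. On $\rr_+$, where $q_{t_i}$ may grow like $e^{rx}$ with $r<\tau$, I would take $\wt V_{t_i}$ to decay like $e^{-r'x}$ for some $r<r'<\tau$, so that $\wt V_{t_i}Q_{t_i}$ is bounded there and the rapid ($\sim e^{-\frac23 x^{3/2}}$) decay of $\Ai$, together with the explicit Gaussian-type form of the Airy semigroup kernel $e^{-(t_j-t_i)H}(x,y)$ at time gap $\geq\tau$, absorbs the conjugation; the slack $r<\tau$ is exactly what keeps the positive-axis integrals convergent. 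On $\rr_-$, where $\Ai$ only decays polynomially, growth must instead be controlled through the weight $\varphi$: I would arrange $\wt V'_{t_i}$ to behave like $\varphi$ for $x<0$, using $\sup_{x<0}\varphi(x)|1-q_{t_i}(x)|<\infty$ together with the integrability $\int_{-\infty}^0 e^{-2(t_n-t_1)x}\varphi(x)^{-2}\,dx<\infty$, which is calibrated precisely to dominate the negative-axis growth of the most dangerous factor $e^{(t_n-t_1)H}\KAirytwo$ (the term carrying the largest time gap, which appears via Lemma \ref{lem:alt} in Assumption \ref{assum:2}(iii)). The boundedness required in Assumption \ref{assum:0} follows from the same, and simpler, estimates.

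Concretely, each target operator would be split into a product of two Hilbert--Schmidt pieces by inserting the conjugating weights and using $\|AB\|_1\leq\|A\|_2\|B\|_2$, so that each Hilbert--Schmidt norm reduces to a double integral of the square of an explicit kernel — $\KAirytwo(x,y)$, $e^{-sH}(x,y)$, or $e^{sH}\KAirytwo(x,y)$ — against the weights, all finite exactly under the stated hypotheses; for Assumption \ref{assum:2}(iii) I would first expand the operator via Lemma \ref{lem:alt} into a sum of products $\cw_{t_i,t_j}Q_{t_j}\cw_{t_j,t_{a_1}}Q_{t_{a_1}}\dotsm Q_{t_{a_k}}\cw_{t_{a_k},t_1}K_{t_1}$ and treat each summand by the same factorization. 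The main obstacle is this final bundle of estimates: arranging the bookkeeping of weights so as to satisfy simultaneously the two algebraic identities of Remark \ref{rem:assum2}(i) and the trace class bounds of (ii)--(iii), and in particular seeing that the single numerical constraint $r<\tau$ — balancing the permitted exponential growth of $q$ against the smallest time gap — is precisely what guarantees convergence of every positive-axis integral, while the negative-axis bounds are entirely a matter of choosing $\varphi$ compatibly with the given integrability condition.
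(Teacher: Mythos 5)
Your architecture is the paper's: apply Theorem \ref{thm:extendedToBVP} after swapping $q_{t_i}\leftrightarrow 1-q_{t_i}$ (so the operators to control are the barred ones), check Assumption \ref{assum:1} spectrally, and get Assumptions \ref{assum:0} and \ref{assum:2} by conjugating with multiplication weights, splitting each target into two Hilbert--Schmidt factors via $\|AB\|_1\le\|A\|_2\|B\|_2$, and using Lemma \ref{lem:alt} together with the adjoint symmetry $U_{t_i}=V_{t_i}'$, $U_{t_i}'=V_{t_i}$ for parts (ii)--(iii) — exactly how the appendix proceeds (the paper additionally uses the factorization $\KAirytwo=B_0P_0B_0$, the split $e^{-tH}=e^{-tH/2}e^{-tH/2}$, and Parseval for the Airy transform in checking Assumption \ref{assum:0}(i)).

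However, your negative-axis calibration is backwards, and as literally written the required trace-class bounds fail there. The hypothesis $\int_{-\infty}^0 dx\,e^{-2(t_n-t_1)x}\varphi(x)^{-2}<\infty$ forces $\varphi$ to grow at least like $e^{-(t_n-t_1)x}$ as $x\to-\infty$. You take $\wt V'_{t_i}\sim\varphi$ for $x<0$ (hence $\wt V_{t_i}\sim\varphi^{-1}$), which puts the exponentially growing weight on the \emph{free} side, next to kernels that decay only polynomially on the negative axis: for instance, for $\wt V_{t_i}\oQ_{t_i}\KAirytwo\wt V'_{t_i}$ the right Hilbert--Schmidt factor contains $\int_{-\infty}^0 dx\,\varphi(x)^2\int_0^\infty d\lambda\,\Ai(x+\lambda)^2$, which diverges since $\int_{-x}^\infty d\lambda\,\Ai(\lambda)^2\geq c\,|x|^{1/2}$ for large $|x|$ while $\varphi^2$ grows exponentially. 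The paper instead splits the power symmetrically: $\psi(x)=e^{-rx/2}\uno{x\geq0}+\varphi(x)^{1/2}\uno{x<0}$, $V_{t_i}=\psi$, $V'_{t_i}=\psi^{-1}$, so the growing half-power $\varphi^{1/2}$ sits against $\oQ_{t_i}$ where it is tamed by $|1-q_{t_i}|\leq c\,\varphi^{-1}$ (net decay $\varphi^{-1/2}$), while the free side carries the decaying $\varphi^{-1/2}$. Relatedly, you misidentify which factor the integrability condition is calibrated against: $e^{(t_n-t_1)H}\KAirytwo$ is the \emph{tame} one (its kernel $\int_0^\infty d\lambda\,e^{-\lambda(t_n-t_1)}\Ai(x+\lambda)\Ai(y+\lambda)$ is bounded, thanks to the spectral cutoff), whereas the dangerous factor on the negative axis is the bare semigroup $\cw_{t_i,t_j}=e^{-(t_j-t_i)H}$, whose kernel grows like $e^{-(t_j-t_i)x}$ as $x\to-\infty$; the exponent $2(t_n-t_1)$ in the hypothesis on $\varphi$ is used precisely to control $\oQ_{t_i}\cw_{t_i,t_j}$ at the maximal gap, as in \eqref{eq:oQetH}. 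Your positive-axis calibration ($e^{-r'x}$ with $r<r'<\tau$; the paper uses $e^{-rx/2}$) and the rest of the outline are fine.
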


This formula is also the limit of the right-hand side of (\ref{eqn:BVPforGUE-OU}) as $N$
goes to infinity.

Since the Airy line ensemble is a continuous version of the multi-layer Airy$_2$ process,
we may take a continuum limit of the above formula, in the same manner as done in Section
\ref{GUEContStat}. The PDE which $\Gamma^{h}_{\ell,r}$ is solving is now $\partial_t u =
-Hu - hu$ (corresponding to replacing $D$ by $H$ in \eqref{eq:PDEGUE}) and the result is
that for any interval $[\ell,r]$ and suitable function $h:\R\times \R \to [0,\infty]$
(for example $h$ can be taken to be bounded, continuous, and such that $h(t,x)=0$ for any
$t\in[\ell,r]$ and $x<M$ for some $M\in\rr$),
\begin{equation}\label{continuumstatAiry}
\EE\!\left[\prod_{j=1}^{\infty}\exp\!\left(-\int_{\ell}^{r}h(t,\aip(j;t)) dt\right) \right]
  =\det\!\big(I-\KAirytwo+ \Gamma^{h}_{\ell,r}
  e^{(r-\ell)H}\KAirytwo\big)_{L^2(\rr)}.
\end{equation}
We will omit the proof of this statement, which can be adapted from the proofs of Proposition
\ref{prop:contGUE} and Corollary \ref{Airytwobddform} together with the proof of
Proposition 3.2 in \cite{CQR}. Taking $h(t,x)$ to be 0 for $x<g(t)$ and infinity otherwise
we recover Theorem 2 of \cite{CQR}.

\subsection{The Pearcey process}\label{sec:pearcey}

There are many other multi-layer processes which arise as scaling limits of
non-intersecting ensembles of Brownian motions (or similar diffusions) for which we can
apply Theorem \ref{thm:extendedToBVP} (see for instance Airy-like processes
\cite{ADvM,AFvM,bbp,BorPeche}; bulk limits such as the Sine process \cite{tracyWidom-Diff}, Pearcey process
\cite{tracyWidom-Pearcey} or Tacnode process \cite{BDTac}; hard edge limits like the
Bessel process \cite{KTBes}).

To illustrate this point we will show how a Fredholm determinant involving the Pearcey
kernel can be rewritten via Theorem \ref{thm:extendedToBVP}.

Let us briefly and informally recall one way the Pearcey process arises as a scaling limit
of Brownian bridges. Consider $2N$ Brownian bridges on the time interval $[-N,N]$ such
that all $2N$ of them start at height 0 and $N$ of them end at height $b$ and the other
$N$ end at height $-b$. Condition these Brownian bridges not to intersect (as can be done
by spacing their starting and ending points by $\varepsilon$ and letting $\varepsilon$ go
to zero). When $b=0$ the limit shape of the ensemble of conditioned Brownian bridges has a
limit shape which is elliptical (and the ensemble is sometimes called a watermelon) and
the fluctuations around the top of this limit shape are described (in the limit as $N$
goes to infinity) by the Airy$_2$ line ensemble minus a parabolic shift.

When the endpoints parameter $b=cN$, the limit shape has a cusp at some time $t=c'N$,
where $c'\in(-1,1)$ is a function of $c$. For $t_1<t_2<\cdots<t_n$, the $N$-tuple of point
processes formed by the heights (properly centered and normalized by $N^{1/4}$ near the
height of the cusp) of the Brownian bridges at times $c'N+t_iN^{1/2}$, $1\leq i \leq n$,
converges in the vague topology as $N$ goes to infinity to a limit which is called the
\emph{Pearcey process}, $\prc$, see \cite{ABK,brezinHikami,brezinHikami2,OkResh,tracyWidom-Pearcey}. It is a
point process valued stochastic process indexed by $t\in \R$. At each time $t$ the point
process can be indexed by $\Z$ as $\left\{\prc(j;t):j\in \Z\right\}$.

Analogously to (\ref{GUEdpp}), and with the functional $\bar{q}$ given in Definition \ref{qdef} and operator $Q$ given in (\ref{eq:defQ}),
\begin{equation}\label{Pearceydpp}
\EE\!\left[ \prod_{j=-\infty}^{\infty} \bar{q}(\prc(j;\cdot)\right] = \det\!\big(I-Q\KPrcext\big)_{L^2(\{t_1,\dots,t_n\}\times\rr)},
\end{equation}
where $\KPrcext$ is the \emph{extended Pearcey kernel}
\begin{multline}
\KPrcext(s,x;t,y)=-\frac{1}{\sqrt{4\pi(t-s)}}\exp\left(-\frac{(y-x)^2}{4(t-s)}\right)\uno{t>s}\\
+\frac{1}{(2\pi \I)^2} \int_{C}du \int_{-\I\infty}^{\I\infty}dv
\frac{e^{-v^4/4+t v^2/2-yv}}{e^{-u^4/4+s u^2/2-xu}}\frac1{v-u},
\end{multline}
and where $C$ is the contour consisting of the rays going from $\pm\infty e^{\I\pi/4}$ to 0
and from 0 to $\pm\infty e^{-\I\pi/4}$.

In the setting of Theorem  \ref{thm:extendedToBVP} we take $X=\rr$,
$\mu$ the Lebesgue measure, and for $t_i<t_j$ define
\begin{gather*}
  \cw_{t_i,t_j}=e^{\frac12(t_j-t_i)\Delta},\qquad
  K_{t_i}(x,y)=\KPrc^{t_i}(x,y):=\KPrcext(t_i,x;t_i,y),\\
  \cw_{t_j,t_i}K_{t_i}(x,y)=\KPrcext(t_j,x;t_i,y).
\end{gather*}
The semigroup property is obviously satisfied, while for $i<j$
\begin{multline*}
  \cw_{t_i,t_j}\KPrc^{t_j}(x,y)=\int_{-\infty}^\infty
  dz\,\frac1{\sqrt{2\pi(t_j-t_i)}}e^{-\frac{(x-z)^2}{2(t_j-t_i)}}\frac{1}{(2\pi \I)^2}
  \int_{C}du \int_{-\I\infty}^{\I\infty}dv
  \frac{e^{-\frac{v^4}4+t_j\frac{v^2}2-yv}}{e^{-\frac{u^4}4+t_j \frac{u^2}2-zu}}\frac1{v-u}\\
  =\frac{1}{(2\pi \I)^2} \int_{C}du \int_{-\I\infty}^{\I\infty}dv\,
  \frac{e^{-\frac{v^4}4+t_j\frac{v^2}2-yv}}{e^{-\frac{u^4}4+t_i\frac{
        u^2}2-xu}}\frac1{v-u}=\KPrcext(t_i,x;t_j,y)+\cw_{t_i,t_j}=\KPrc^{t_i}\cw_{t_i,t_j}(x,y),
\end{multline*}
where the second equality follows from computing a simple Gaussian integral and the last
equality is obtained similarly. Likewise one can check that for $i<j$ we have
$\cw_{t_i,t_j}\cw_{t_j,t_i}\KPrc^{t_i}=\KPrc^{t_i}$. Hence Assumption \ref{assum:1} is
satisfied, and from Theorem \ref{thm:extendedToBVP} we deduce the following:

\begin{corollary}\label{thm:pearcey}
  For any $t_1<t_2<\dots<t_n$ and functions $q_{t_i}:\R\to\R$, $1\leq i\leq n$ so
  that Assumptions \ref{assum:0} and \ref{assum:2} are satisfied, we have
  \begin{equation}
    \EE\!\left[ \prod_{j=-\infty}^{\infty} q(\prc(j;\cdot))\right]
    =\det\!\big(I-\KPrc^{t_1}+Q_{t_1}e^{\frac12(t_2-t_1)\Delta}Q_{t_2}\cdots e^{\frac12(t_n-t_{n-1})\Delta}Q_{t_n} e^{\frac12(t_1-t_n)\Delta} \KPrc^{t_1}\big)_{L^2(\rr)}.
  \end{equation}
  In particular, the formula holds for the case $q_{t_i}(x)=\uno{x\leq a_i}$.
\end{corollary}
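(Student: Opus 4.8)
The plan is to read off the corollary as a direct instance of Theorem~\ref{thm:extendedToBVP}; the only real work is checking its hypotheses for the Pearcey data. Assumption~\ref{assum:1} has already been verified in the discussion preceding the statement: the semigroup property is immediate for the heat flow $\cw_{t_i,t_j}=e^{\frac12(t_j-t_i)\Delta}$, while the reversibility relation $\cw_{t_i,t_j}\KPrc^{t_j}=\KPrc^{t_i}\cw_{t_i,t_j}$ and the right-invertibility $\cw_{t_i,t_j}\cw_{t_j,t_i}\KPrc^{t_i}=\KPrc^{t_i}$ follow from the displayed Gaussian-integral computation, where completing the square produces an extra copy of $\cw_{t_i,t_j}$ that accounts exactly for the $-\uno{t>s}$ heat-kernel term in $\KPrcext$. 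Granting Assumptions~\ref{assum:0} and~\ref{assum:2}, which are part of the hypotheses, the theorem then applies verbatim.

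First I would record the reduction to the stated form. The determinantal structure \eqref{Pearceydpp} writes $\EE[\prod_j\bar q(\prc(j;\cdot))]$ as $\det(I-Q\KPrcext)$ with $Q$ multiplication by $q_{t_i}$. Applying \eqref{Pearceydpp} and Theorem~\ref{thm:extendedToBVP} with $q_{t_i}$ replaced by $1-q_{t_i}$, so that the multiplication operators become $I-Q_{t_i}$, turns the left-hand side into $\EE[\prod_j q(\prc(j;\cdot))]$ and, because the path-integral side of the theorem is built from the complements $I-(I-Q_{t_i})=Q_{t_i}$, turns the right-hand side into exactly the operator in the statement (the final factor being read as the single kernel $\cw_{t_n,t_1}\KPrc^{t_1}=\KPrcext(t_n,\cdot;t_1,\cdot)$). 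This is the same bar-removing substitution used after Corollary~\ref{cor:gue}.

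The substance is the ``in particular'' claim, i.e.\ verifying Assumptions~\ref{assum:0} and~\ref{assum:2} when $q_{t_i}=\uno{x\le a_i}$; after the substitution above this amounts to checking them for the multiplication operators $\uno{x>a_i}$. I would do this through Remark~\ref{rem:assum2}, choosing \emph{exponential} conjugating weights $\wt V_{t_i}(x)=e^{-\beta_i x}$, $\wt V'_{t_i}(x)=e^{\beta_i x}$ with strictly decreasing exponents $\beta_1>\beta_2>\dots>\beta_n>0$. These satisfy the compatibility relations $\wt V'_{t_i}\wt V_{t_i}=I$ and $K_{t_i}\wt V_{t_i}\wt V'_{t_i}=K_{t_i}$ trivially, and they are tailored to the two competing features of the Pearcey data. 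On the one hand the free heat semigroup $\cw_{t_i,t_j}$ ($i<j$) is not trace class, but the block $\wt V_{t_i}\uno{x>a_i}\cw_{t_i,t_j}\wt V'_{t_j}$ has a Gaussian kernel whose Hilbert--Schmidt norm, after the elementary Gaussian integral, carries the factor $e^{2(\beta_j-\beta_i)x}$ over $x>a_i$; the time-graded choice $\beta_i>\beta_j$ makes this integrable. On the other hand every remaining block contains a Pearcey factor $\KPrc^{t_i}$ or $\KPrcext(t_j,\cdot;t_i,\cdot)$, and the quartic exponents $e^{\mp u^4/4}$, $e^{-v^4/4}$ in the contour representation force super-exponential decay (of order $e^{-c|x|^{4/3}}$) in each variable, which dominates any exponential weight $e^{\beta_i x}$; hence those blocks stay trace class. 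The composite operator in Assumption~\ref{assum:2}(iii), expanded by Lemma~\ref{lem:alt} into a finite sum of products each terminating in a decaying kernel $\KPrcext(t_{a_k},\cdot;t_1,\cdot)$, is controlled the same way.

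The hard part will be the quantitative bookkeeping. Turning the heuristics above into proofs requires uniform steepest-descent bounds on the double contour integral defining $\KPrcext(s,x;t,y)$, establishing the $e^{-c|x|^{4/3}}$ decay with constants uniform over the finitely many pairs $(t_i,t_j)$ and showing that it survives multiplication by the growing weights; one must also pass from Hilbert--Schmidt estimates to genuine trace-class bounds (by factoring each block through an intermediate $L^2$ and using that a product of two Hilbert--Schmidt operators is trace class) and treat the one factor built from the backward heat flow $\cw_{t_n,t_1}$ only as the well-defined composite kernel $\KPrcext(t_n,\cdot;t_1,\cdot)$ rather than splitting it. Once these estimates are in place, Assumptions~\ref{assum:0} and~\ref{assum:2} hold and Theorem~\ref{thm:extendedToBVP} delivers the identity, with both Fredholm determinants well-defined in the sense of Remark~\ref{rem:extToBVP}.
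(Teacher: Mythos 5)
Your first two paragraphs reproduce the paper's actual proof of the displayed identity: the paper verifies Assumption~\ref{assum:1} in the text immediately preceding the statement (the semigroup property is immediate, and the displayed Gaussian computation gives $\cw_{t_i,t_j}\KPrc^{t_j}=\KPrcext(t_i,\cdot\,;t_j,\cdot)+\cw_{t_i,t_j}=\KPrc^{t_i}\cw_{t_i,t_j}$, with right-invertibility checked ``likewise''), performs the same bar-removing substitution $q_{t_i}\mapsto 1-q_{t_i}$ used after Corollary~\ref{cor:gue}, and then invokes Theorem~\ref{thm:extendedToBVP} with Assumptions~\ref{assum:0} and~\ref{assum:2} taken as hypotheses. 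Up to there your write-up is correct and coincides with the paper. Be aware, however, that the paper offers \emph{no} proof of the ``in particular'' sentence: unlike Corollaries~\ref{cor:gue}, \ref{Airytwobddform} and~\ref{cor:2to1}, this corollary has no proof in Appendix~\ref{sec:app}, and the sentence following it explicitly declines to discuss conditions on the $q_{t_i}$. So the part you identify as ``the substance'' is precisely the part the paper does not do.

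Your proposed verification of that part contains a genuine error: the equal-time Pearcey kernel does \emph{not} decay like $e^{-c|x|^{4/3}}$ in each variable. The quartic exponents control the two single integrals separately, but the factor $\tfrac1{v-u}$ couples them: to pass the $v$-contour through its saddles at $|y|^{1/3}e^{\pm\I\pi/3}$ and the $u$-contour through $\mp|x|^{1/3}$ one must pull the contours past each other, and the residue at $v=u$ contributes $\tfrac1{2\pi\I}\int_\Sigma du\,e^{(x-y)u}$ over a crossed arc $\Sigma$ of length of order $\max(|x|,|y|)^{1/3}$. On the diagonal this yields $\KPrc^{t}(x,x)\sim c\hs|x|^{1/3}$ as $x\to\pm\infty$, consistent with the macroscopic cusp density $\sim|x|^{1/3}$ and with the bi-infinite indexing $j\in\Z$. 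Consequently $\int_{a}^{\infty}dx\,\KPrc^{t_i}(x,x)=\infty$, and since conjugation by multiplication operators leaves the diagonal of $\wt V_{t_i}\uno{x>a_i}\KPrc^{t_i}\wt V_{t_i}'$ unchanged, no choice of weights (exponential or otherwise) can make these blocks trace class: a trace class operator with continuous kernel has integrable diagonal, while here even the first term of the Fredholm expansion diverges. Your Gaussian-weight estimate for the pure heat-semigroup blocks $\wt V_{t_i}\uno{x>a_i}e^{\frac12(t_j-t_i)\Delta}\wt V_{t_j}'$ with $\beta_i>\beta_j$ is fine, but the failure is confined to exactly the blocks carrying a Pearcey factor, which is where the content lies. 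Note moreover that the expected (hence almost sure) number of Pearcey particles in any half-line is infinite, so for $q_{t_i}=\uno{x\leq a_i}$ the left-hand side is a ray-gap probability that vanishes; the indicator case therefore cannot be reached by the route through Remark~\ref{rem:assum2} at all, and the paper's unproven assertion should be handled with care rather than treated as recoverable by straightforward trace-norm bookkeeping of the kind you outline.
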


We do not attempt here to provide more general conditions on the functions $q_{t_i}$ so that
the formula holds.

% \dnote{Checking the trace class stuff even in the indicator function case would be
% annoying, and I haven't written a proof. I would hope there is a proof
% somewhere that the Pearcey kernel is trace class for $Q$ corresponding to indicator
% functions, which would basically give all we need, but I couldn't find any reference.}

% One can take a limit of the right-hand side as the mesh of times $t_i$ goes to
% zero. However, it has not been shown (though most likely it is true) that there exists a
% continuous version of the Pearcey process. Perhaps the existence of such continuum
% limits of the right-hand side of the above equation may be helpful in establishing the
% existence of a continuous non-intersecting Pearcey line ensemble with the Brownian Gibbs
% property suggested by the form of the above equation (since $H$ has been replaced by
% $\Delta$ the Brownian bridges should be usual ones, not heavy ones as described by the
% infinitesimal generator $H$). \dnote{Ivan: as we discussed, I think we should expand a
% little bit this explanation.}  \dnote{Ivan: I'm not entirely sure about this last
% point. One can also write the Airy$_2$
% formula with $\Delta$ instead of $H$, a bit in the manner of the next section, so
% somehow the structure of $K$ plays a role. Recall also that you can get Airy$_2$
% from the Pearcey process.}

\subsection{The Airy$_1$ and Airy\texorpdfstring{$_{2\to 1}$}{2->1} processes}\label{Airy12Sec}

All of the examples considered thus far have involved probability measures on ensembles of
non-intersecting paths or their scaling limits. Going back to the discrete setting of
Theorem \ref{combthm}, there was no condition that the measure on non-intersecting paths be
positive. This condition is not met, for example, in the case of the Airy$_1$ and
Airy$_{2\to 1}$ processes. These are real valued stochastic processes which are the
scaling limits of marginals of measures (not entirely positive) on non-intersecting paths
\cite{sasamotoAiry1,bfps,bfp,bfs}. Even though the ensemble measure is not
entirely positive, the marginal is a probability measure.

We will focus on the Airy$_{2\to 1}$ process obtained in \cite{bfs}, since a similar result to that which we now
state has already shown up in \cite{quastelRemAiry1}. The Airy$_{2\to 1}$ process is a
continuous time (non-stationary) real valued process $\Bt:\R\to \R$ given by its
finite-dimensional distributions
\begin{equation}\label{eqTransAiryProcess}
\pp\!\left(\bigcap_{k=1}^n\{\Bt(t_k)\le x_k\}\right)=  \det\!\big(I-\chi K_{2\to1}\big)_{L^2(\{t_1,\dots,t_n\}\times\mathbb{R})}
\end{equation}
for $t_1<\dots<t_n$, where $\chi f(t_i,x)=\uno{x\geq x_i}f(x)$ and
\begin{multline}\label{eqtildeKCompleteInfinity}
\Ktwooneext(s,x;t,y)=-\frac{1}{\sqrt{4\pi(t-s)}}\exp\left(-\frac{(\tilde y-\tilde x)^2}{4(t-s)}\right)\uno{t>s}\\
+\frac{1}{(2\pi \I)^2} \int_{\gamma_+}dw \int_{\gamma_-}dz\,
\frac{e^{w^3/3+t w^2-\tilde y w}}{e^{z^3/3+s z^2-\tilde x z}} \frac{2w}{(z-w)(z+w)}
\end{multline}
with
\[\tilde x=x-(s^-)^2,\qquad\tilde y=y-(t^-)^2,\]
notation $r^-=\min\{0,r\}$, and the paths $\gamma_+,\gamma_-$ satisfying
$-\gamma_+\subseteq\gamma_-$ with \mbox{$\gamma_+:e^{\I\phi_+}\infty\to
  e^{-\I\phi_+}\infty$}, \mbox{$\gamma_-:e^{-\I\phi_-}\infty\to e^{\I\phi_-}\infty$} for
some $\phi_+\in (\pi/3,\pi/2)$, $\phi_-\in (\pi/2,\pi-\phi_+)$. The Airy$_{2\to 1}$
process crosses over between the Airy$_2$ and the Airy$_1$ processes in the sense that
$\Bt(t+\tau)$ converges to $2^{1/3}\aipo(2^{-2/3}t)$ as $\tau\to\infty$ and to $\aip(1;t)$
(the Airy$_2$ process, i.e. the top line of the multi-layer Airy$_2$ process) when
$\tau\to-\infty$ (in the sense of finite dimensional distributions). It is expected to
govern the asymptotic spatial fluctuations in random growth models when the initial
conditions are deterministic near the point where the hydrodynamic profile changes from
flat to curved. In particular, it is shown in \cite{bfs} that it governs the asymptotic
fluctuations near the profile switch point for the totally asymmetric simple exclusion process starting with particles
only at the even negative integers.

We take again $X=\rr$ and $\mu$ the Lebesgue measure, and for $i<j$ we define
\begin{gather*}
  \cw_{t_i,t_j}(x,y)=e^{(t_j-t_i)\Delta}(x-(t_i^-)^2,y-(t_j^-)^2),\quad\,\,
  K_{t_i}(x,y)=\Ktwoone^{t_i}(x,y):=\Ktwooneext(t_i,x;t_i,y)\\
  \cw_{t_j,t_i}K_{t_i}(x,y)=\Ktwooneext(t_j,x;t_i,y).\
\end{gather*}
Proceeding as in Section \ref{sec:pearcey} one checks that these choices satisfy
Assumption \ref{assum:1}, and hence (under the additional assumptions) we may
apply Theorem \ref{thm:extendedToBVP}. Using the translation invariance of the heat kernel
to rearrange the shifts appearing in the resulting formula we get:
% $\cw_{t_i,t_j}$ gives
% The operators $\cw_{t_i,t_j}$ clearly satisfy the semigroup property, and one can check directly by integration that
% \begin{equation*}
%   \cw_{t_i,t_j}K_{t_j}(x,y)=\Ktwooneext(t_i,x;t_j,y)+\cw_{t_i,t_j}(x,y)=K_{t_i}\cw_{t_i,t_j}(x,y).
% %  K_\cw_{t_i,t_j}(x,y)&=K_{2\to1}(0,x;t_j-t_i,y)
% \end{equation*}
% Moreover, if we define
% \[\widehat\cw_{t_j,t_i}(x,y)=\Ktwooneext(t_j,x;t_i,y)\qquad\text{then}
% \qquad \cw_{t_i,t_j}\widehat\cw_{t_j,t_i}=K_{t_i},\] which shows that $\cw_{t_i,t_j}$ is invertible on the range of $K_{t_i}$, and we may therefore define
% $\cw_{t_j,t_i}K_{t_i}$ as $\widehat\cw_{t_j,t_i}$.
% Hence these operators satisfy Assumptions \ref{assum:1} \note{need to check the trace
%   class stuff}, and we can apply Theorem \ref{thm:extendedToBVP}. Writing down the
% resulting formula and rearranging the shifts appearing in the definition of the operators
% $\cw_{t_i,t_j}$ gives

\begin{corollary}\label{cor:2to1}
For any $t_1<t_2<\cdots<t_n$, we have
\begin{multline}\label{eq:2to1}
  \pp\!\left(\bigcap_{k=1}^n\{\Bt(t_k)\le x_k\}\right)\\
  =\det\!\big(I-\Ktwoone^{t_1}+\bar P_{\tilde x_1}e^{(t_2-t_1)\Delta}\bar P_{\tilde
    x_2}\cdots e^{(t_n-t_{n-1})\Delta}\bar P_{\tilde x_n} e^{(t_1-t_n)\Delta}
  \Ktwoone^{t_1}\big)_{L^2(\rr)},
\end{multline}
where $\tilde x_i=x_i-(t_i^-)^2$, $\bar{P}_{a}f(x) = \uno{x\leq a} f(x)$ and $\Ktwoone^{t_1}(x,y)=\Ktwooneext(t_1,x+(t_1^-)^2;t_1,y+(t_1^-)^2)$.
\end{corollary}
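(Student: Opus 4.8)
The plan is to read off the left-hand side of \eqref{eq:2to1} from the extended-kernel formula \eqref{eqTransAiryProcess}, recognize it as the left-hand side of the identity in Theorem \ref{thm:extendedToBVP}, apply that theorem to collapse the determinant onto a single copy of $L^2(\rr)$, and then clean up the coordinate shifts $(t_i^-)^2$ by a unitary conjugation. Concretely, \eqref{eqTransAiryProcess} is $\det(I-QK^{\rm ext})_{L^2(\{t_1,\dots,t_n\}\times\rr)}$ with $K^{\rm ext}=\Ktwooneext$ and $Q=\chi$, so that $Q_{t_i}f(x)=\uno{x\geq x_i}f(x)$ and hence $\oQ_{t_i}=\bar P_{x_i}$ (the two agree off the null set $\{x=x_i\}$). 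With the choices of $\cw_{t_i,t_j}$, $K_{t_i}$ and $\cw_{t_j,t_i}K_{t_i}$ recorded before the corollary, Theorem \ref{thm:extendedToBVP} would give
\[
\pp\Big(\bigcap_{k=1}^n\{\Bt(t_k)\le x_k\}\Big)=\det\big(I-K_{t_1}+\bar P_{x_1}\cw_{t_1,t_2}\bar P_{x_2}\dotsm\cw_{t_{n-1},t_n}\bar P_{x_n}\cw_{t_n,t_1}K_{t_1}\big)_{L^2(\rr)},
\]
and it remains to transform the right-hand side into the form in \eqref{eq:2to1}.

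Before applying the theorem I would verify its hypotheses. Assumption \ref{assum:1} is checked exactly as for the Pearcey process in Section \ref{sec:pearcey}: the semigroup property is immediate from the heat semigroup together with the cancellation of the coordinate shifts, while the reversibility relation $\cw_{t_i,t_j}K_{t_j}=K_{t_i}\cw_{t_i,t_j}$ and right-invertibility $\cw_{t_i,t_j}\cw_{t_j,t_i}K_{t_i}=K_{t_i}$ follow by inserting the Gaussian (heat) kernel into the double-contour representation \eqref{eqtildeKCompleteInfinity} and evaluating the resulting Gaussian integral, as was done there. The technical Assumptions \ref{assum:0} and \ref{assum:2} must then be checked for the sharp projection $Q=\chi$; I would do this as in the Airy$_1$ treatment of \cite{quastelRemAiry1}, choosing conjugating multiplication operators $V_{t_i},U_{t_i}$ (and their almost-inverses) of exponential type decaying fast enough on the half-line where $\oQ_{t_i}$ is supported to render the kernels in Assumption \ref{assum:2}(ii) and (iii) trace class, using Lemma \ref{lem:alt} to rewrite the operator in \ref{assum:2}(iii). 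This trace-class verification for the non-decaying, contour-integral Pearcey-type kernel against a hard cutoff is the one genuinely delicate point; everything else is bookkeeping.

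The substantive remaining step is the shift rearrangement. Let $a_i=(t_i^-)^2$ and let $\tau_a$ be the unitary translation $\tau_a f(x)=f(x+a)$. Translation invariance of the heat kernel gives the factorization $\cw_{t_i,t_j}=\tau_{-a_i}e^{(t_j-t_i)\Delta}\tau_{a_j}$ for $i<j$, and correspondingly the right inverse $\cw_{t_n,t_1}$ acts on the range of $K_{t_1}$ as $\tau_{-a_n}e^{(t_1-t_n)\Delta}\tau_{a_1}$. Substituting these into the product $\bar P_{x_1}\cw_{t_1,t_2}\bar P_{x_2}\dotsm\cw_{t_n,t_1}K_{t_1}$, each interior pair $\tau_{a_i}\bar P_{x_i}\tau_{-a_i}$ collapses, via the elementary identity $\tau_a\bar P_s\tau_{-a}=\bar P_{s-a}$, to $\bar P_{\tilde x_i}$ with $\tilde x_i=x_i-a_i$, while the interior translations cancel and leave bare heat semigroups $e^{(t_{i+1}-t_i)\Delta}$. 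Finally I would conjugate the whole operator by $\tau_{a_1}$; since $\tau_{a_1}$ is unitary the Fredholm determinant is unchanged, the leftover boundary factors are absorbed into $\tau_{a_1}\bar P_{x_1}\tau_{-a_1}=\bar P_{\tilde x_1}$ and $\tau_{a_1}K_{t_1}\tau_{-a_1}=\Ktwoone^{t_1}$ (exactly the shifted kernel in the corollary's statement), and one lands on
\[
\det\big(I-\Ktwoone^{t_1}+\bar P_{\tilde x_1}e^{(t_2-t_1)\Delta}\bar P_{\tilde x_2}\dotsm e^{(t_n-t_{n-1})\Delta}\bar P_{\tilde x_n}e^{(t_1-t_n)\Delta}\Ktwoone^{t_1}\big)_{L^2(\rr)},
\]
which is the right-hand side of \eqref{eq:2to1}. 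The only care needed is that this unitary conjugation is compatible with the regularization by $U_{t_1},U_{t_1}'$ used to define the determinant in Remark \ref{rem:extToBVP}: conjugating by the unitary $\tau_{a_1}$ preserves the trace class and turns $U_{t_1},U_{t_1}'$ into shifted multiplication operators of the same type, so the regularized determinant is unaffected.
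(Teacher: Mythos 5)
Your proposal is correct and follows essentially the same route as the paper: choose the shifted heat-kernel operators recorded before the corollary, check Assumption \ref{assum:1} by the Pearcey-style Gaussian-integral computation, verify the technical Assumptions \ref{assum:0} and \ref{assum:2} by conjugating with exponential-weight multiplication operators and Lemma \ref{lem:alt} (the paper points to the proof of Corollary \ref{Airytwobddform} and Appendix A of \cite{bfp} rather than \cite{quastelRemAiry1}, but the content is the same), apply Theorem \ref{thm:extendedToBVP}, and rearrange the shifts by translation invariance of the heat kernel. Your explicit factorization $\cw_{t_i,t_j}=\tau_{-a_i}e^{(t_j-t_i)\Delta}\tau_{a_j}$, the collapse $\tau_{a_i}\bar P_{x_i}\tau_{-a_i}=\bar P_{\tilde x_i}$, and the final unitary conjugation by $\tau_{a_1}$ (checked to be compatible with the $U_{t_1},U_{t_1}'$ regularization) simply spell out the step the paper summarizes as ``using the translation invariance of the heat kernel to rearrange the shifts.''
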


In the formula, $e^{(t_1-t_n)\Delta}K_{2\to1}^{t_1}$ should be interpreted as
$\Ktwooneext(t_n,x+(t_n^-)^2;t_1,y+(t_1^-)^2)$. One can use this formula directly to
recover the analogous path-integral kernel formulas for the Airy$_1$ and Airy$_2$
processes in the appropriate limits, and thus show that $\Bt$ interpolates between
these two processes.

\subsection{Markov processes on partitions and $z$-measures}

The $z$-measures are a remarkable family of probability distributions on
partitions that arise in representation theory of the infinite-symmetric group.
They can be viewed as determinantal point processes on the one-dimensional
lattice with infinite many particles, and they degenerate to a variety of
well-known discrete and continuous determinantal point processes, see
\cite{BO-MRL,Ol-Springer,BO-Gamma} and references therein.

In \cite{BOMarkov}, a Markov process on partitions that preserves the
$z$-measures was constructed. Its dynamical correlation functions are
determinantal, and they can be described via the corresponding extended kernel,
see Section 6 of \cite{BOMarkov}. One particular limit of this Markov process
can be seen as `space-like' space-time sections of the multilayer polynuclear
growth process of \cite{PS}, see \cite{BO-Planch}.

Note that it is not known how to obtain the $z$-measures and the corresponding
Markov processes as a limit of an ensemble of nonintersecting paths. However,
these objects can be viewed as an \emph{anaytic continuation} of an
ensemble of nonintersecting birth-and-death processes \emph{in the number of
paths}, see Section 6.5 of \cite{BOMarkov}.

By
encoding a partition $\lambda=(\lambda_1\geq \lambda_2\ge \cdots)$ by the point
configuration $\{\lambda_i - i +\tfrac{1}{2}\}_{i\ge 1}$, the Markov
process can be written as $\left\{\Zproc(j;t): j\in \Z_{\geq 1}, t\in \R\right\}$. Here
$\Zproc(j;t)$ takes values in $\Z'= \Z+\tfrac{1}{2}$.

Similarly to (\ref{GUEdpp}), and with the functional $\bar{q}$ given in Definition \ref{qdef} and operator $Q$ given in (\ref{eq:defQ}),
\begin{equation}\label{Zdpp}
\EE\!\left[ \prod_{j=1}^{\infty} \bar{q}(\Zproc(j;\cdot))\right] =
\det\!\big(I-Q\KZext\big)_{L^2(\{t_1,\dots,t_n\}\times\zz')}
\end{equation}
where $\KZext$ is the \emph{extended hypergeometric kernel} which we will now
define.

For parameters $z,z'\in\mathbb{C}$ such that either
$z'=\bar{z}\in\mathbb{C}\setminus\mathbb{Z}$ or $m<z,z'<m+1$ for a
$m\in\mathbb{Z}$, and $\xi\in (0,1)$ define a second order difference
operator $\DZ$ on $\Z'$, depending on $(z,z',\xi)$ and acting on
functions $f(\cdot)\in \ell^2(\Z')$ as follows
\begin{multline}
(\DZ f)(x) = \sqrt{\xi(z+x+\tfrac{1}{2})(z'+x+\tfrac{1}{2})} f(x+1)\\
+  \sqrt{\xi(z+x-\tfrac{1}{2})(z'+x-\tfrac{1}{2})} f(x-1) - (x+\xi(z+z'+x)) f(x).
\end{multline}
This is a self-adjoint operator with discrete simple spectrum $(1-\xi)\Z'$. Its eigenfunctions $\psi_a$,
\begin{equation*}
\DZ\psi_a = (1-\xi)a \, \psi_a,
\end{equation*}
are explicitly written through the Gauss hypergeometric function (see \cite{BOMarkov},
equation (5.1)). We normalize them by the condition $\|\psi_a\|_{\ell^2(\Z')}
=1$. Then
\begin{equation}\label{eq:extKernelZ}
\KZext(s,x;t,y)=
    \begin{dcases*}
      \sum_{a\in \Z'_{+}} e^{-a(s-t)} \psi_{a}(x)\psi_{a}(y) & if $s\geq t$,\\
      -\sum_{a\in \Z'_{-}} e^{-a(s-t)} \psi_{a}(x)\psi_{a}(y) & if $s<t$,
    \end{dcases*}
\end{equation}
where $\Z'_{\pm} = \{\pm \tfrac{1}{2},\pm \tfrac{3}{2},\pm\tfrac{5}{2},\ldots\}$.

Let $\DZ' = -(1-\xi)^{-1} \DZ$. Then in the setting of Theorem \ref{thm:extendedToBVP},
\eqref{eq:extKernelZ} corresponds to taking $X=\Z'$, $\mu$ the counting measure and, for
$i<j$,
\begin{gather*}
  \cw_{t_i,t_j}=e^{(t_j-t_i)\DZ'},\qquad
  K_{t_i}(x,y)=\KZ(x,y):=\KZext(0,x;0,y)=\sum_{a\in \Z'_{+}}\psi_{a}(x)\psi_{a}(y),\\
  \cw_{t_j,t_i}K_{t_i}(x,y)=\KZext(t_j,x;t_i,y)=\sum_{a\in \Z'_{+}} e^{-a(t_j-t_i)} \psi_{a}(x)\psi_{a}(y).
\end{gather*}
Thus from Theorem \ref{thm:extendedToBVP} we deduce the following:

\begin{corollary}\label{thm:Zmeas}
For any $t_1<t_2<\dots<t_n$ and functions $q_{t_i}:\mathbb{Z}'\to\mathbb{R}$,
$1\le i\le n$, so that Assumptions 1 and 3 are satisfied, we have
\begin{multline}
\EE\!\left[ \prod_{j=1}^{\infty} q(\Zproc(j;\cdot))\right]\\
=\det\!\big(I-\KZ+Q_{t_1}e^{(t_2-t_1)\DZ'}Q_{t_2}\cdots e^{(t_n-t_{n-1})\DZ'}Q_{t_n} e^{(t_1-t_n)\DZ'} \KZ\big)_{L^2(\zz')}.
\end{multline}
\end{corollary}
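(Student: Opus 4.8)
The plan is to deduce Corollary~\ref{thm:Zmeas} from Theorem~\ref{thm:extendedToBVP} in exactly the manner of Corollaries~\ref{cor:gue} and~\ref{Airytwobddform}, taking advantage of the fact that all of the relevant operators are simultaneously diagonalized by the eigenbasis of $\DZ'$. Since $\DZ$ is self-adjoint on $\ell^2(\Z')$ with simple discrete spectrum $(1-\xi)\Z'$, its normalized eigenfunctions $\{\psi_a\}_{a\in\Z'}$ form an orthonormal basis of $\ell^2(\Z')$, and $\DZ'$, $\cw_{t_i,t_j}=e^{(t_j-t_i)\DZ'}$, $K_{t_i}=\KZ$ and $\cw_{t_j,t_i}K_{t_i}$ are all functions of $\DZ'$ in this basis. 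Reading them off from \eqref{eq:extKernelZ}: $\KZ$ is the orthogonal projection onto the closed linear span of $\{\psi_a:a\in\Z'_{+}\}$; $\cw_{t_j,t_i}K_{t_i}$ is diagonal with eigenvalue $e^{-a(t_j-t_i)}$ on $\psi_a$, $a\in\Z'_{+}$, and $0$ elsewhere; and $\cw_{t_i,t_j}$ is the corresponding semigroup, acting as the reciprocal scalar on the range of $\KZ$, so that $\cw_{t_j,t_i}$ is its right inverse there. The operators $Q_{t_i}$ are the multiplication operators of \eqref{eq:defQ}.

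First I would confirm that the abstract extended kernel \eqref{eq:generalExt} reproduces \eqref{eq:extKernelZ}. For $i\ge j$ the branch $\cw_{t_i,t_j}K_{t_j}$ gives $\sum_{a\in\Z'_{+}}e^{-a(t_i-t_j)}\psi_a(x)\psi_a(y)$, the $s\ge t$ case of \eqref{eq:extKernelZ}, while for $i<j$ the branch $-\cw_{t_i,t_j}(I-K_{t_j})$ reproduces the $s<t$ case, because $I-\KZ$ is the complementary projection onto $\{\psi_a:a\in\Z'_{-}\}$. This is the precise analogue of the identification made just before Corollary~\ref{Airytwobddform}, with the Airy Hamiltonian $H$ replaced by $\DZ'$ and its negative generalized eigenspace replaced by the eigenspaces indexed by $\Z'_{+}$; in particular the left-hand side of \eqref{Zdpp} is of the form $\det(I-QK^{\rm ext})$ in the notation of the theorem.

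The key step is to verify Assumption~\ref{assum:1}, which in this diagonal setting collapses to elementary scalar identities. The semigroup property follows at once from $e^{u\DZ'}e^{v\DZ'}=e^{(u+v)\DZ'}$. Right-invertibility $\cw_{t_i,t_j}\cw_{t_j,t_i}K_{t_i}=K_{t_i}$ holds because $\KZ$ projects onto the eigenspaces on which $\cw_{t_i,t_j}$ and $\cw_{t_j,t_i}$ act by reciprocal scalars, so that their composition is the identity on the range of $\KZ$ — exactly as $\KAirytwo$ is the spectral projection of $H$ onto its negative eigenspace in the Airy$_2$ case. The reversibility relation $\cw_{t_i,t_j}K_{t_j}=K_{t_i}\cw_{t_i,t_j}$ is immediate, since $K_{t_i}=K_{t_j}=\KZ$ does not depend on time and commutes with $\cw_{t_i,t_j}$, both being functions of $\DZ'$. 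Granting Assumptions~\ref{assum:0} and~\ref{assum:2}, which are hypotheses of the corollary, Theorem~\ref{thm:extendedToBVP} then yields \eqref{eq:extToBVP} with $K_{t_1}=\KZ$ and $\cw_{t_i,t_{i+1}}=e^{(t_{i+1}-t_i)\DZ'}$. Finally, as noted after Corollary~\ref{cor:gue}, I would remove the bars by running the entire argument with $1-q_{t_i}$ in place of $q_{t_i}$: this turns the left-hand side of \eqref{Zdpp} into $\EE\big[\prod_{j=1}^{\infty}q(\Zproc(j;\cdot))\big]$ and replaces every $\oQ_{t_i}$ in \eqref{eq:extToBVP} by $Q_{t_i}$, giving precisely the claimed identity.

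I expect the genuine difficulty to be analytic rather than algebraic. The semigroup $\cw_{t_i,t_j}=e^{(t_j-t_i)\DZ'}$ is unbounded on all of $\ell^2(\Z')$ and only makes sense on the range of $\KZ$, so one must ensure that the products entering $K^{\rm ext}$ and the path-integral kernel genuinely define trace-class operators on $\ell^2(\Z')$. This is exactly the content of Assumptions~\ref{assum:0} and~\ref{assum:2}, which are assumed here rather than checked; as in Remark~\ref{rem:assum2} and the proof of Corollary~\ref{Airytwobddform}, establishing them would require choosing conjugating multiplication operators adapted to the growth of the $q_{t_i}$ and of the eigenfunctions $\psi_a$, and it is for this reason that the corollary is phrased conditionally on those assumptions.
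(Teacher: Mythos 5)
Your proposal is correct and follows essentially the same route as the paper: the paper likewise deduces Corollary \ref{thm:Zmeas} directly from Theorem \ref{thm:extendedToBVP}, taking Assumptions \ref{assum:0} and \ref{assum:2} as hypotheses and verifying Assumption \ref{assum:1} (implicitly there, explicitly in your write-up) through the spectral decomposition of $\DZ$, whose orthonormal eigenfunctions $\psi_a$ simultaneously diagonalize $\KZ$ and the semigroups, with $\KZ$ the projection onto the span of $\{\psi_a:a\in\Z'_+\}$. Your removal of the bars via $q_{t_i}\mapsto 1-q_{t_i}$ is exactly the device used after Corollary \ref{cor:gue}, and your closing comment on the analytic hypotheses matches the paper's remark that finitely supported $q_{t_i}$ make them automatic.
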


Let us remark that if all the functions $q_{t_i}$ have finite support then all
the needed analytic assumptions are automatically satisfied because we are
working in an $L^2$ space on a finite set. Of course, such a restriction is
unnecessarily harsh, but we will not pursue this issue here any further.

\appendix

\section{Proofs of the results from Section \ref{sec:examples}}
\label{sec:app}

%In this section we will prove the results from Section \ref{sec:examples}.
We recall the following facts about trace class and Hilbert-Schmidt norms (see e.g.
\cite{Simon}) of operators in $L^2(X)$ for some measurable space $(X,\Sigma,\mu)$, which we will use repeatedly without reference:
\begin{gather*}
  \|AB\|_1\leq\|A\|_2\|B\|_2,\qquad\|AB\|_1\leq\|A\|_{\rm op}\|B\|_1,\\
  \shortintertext{and if $A$ has integral kernel $A(x,y)$,}
  \|A\|_2=\left(\int \mu(dx)\mu(dy)\,|A(x,y)|^2\right)^{1/2},
\end{gather*}
for each $A,B$ in the appropriate space, where $\|\cdot\|_{\rm op}$ denotes the operator
norm in $L^2(X)$.

Throughout this section $c$ and $c'$ will denote positive constants whose value may change
from line to line.

\begin{proof}[Proof of Corollary \ref{cor:gue}]
  Checking Assumption \ref{assum:1} is straightforward. We will take in this case
  $V_{t_i}=V_{t_i}'=U_{t_i}=U_{t_i}'=I$, and thus Assumption \ref{assum:0} is contained in
  Assumption \ref{assum:2}, which we check next.

  Condition (i) is trivial. Given functions $\psi_1$ and $\psi_2$
  write $\psi_1\otimes\psi_2$ for the kernel $\psi_1(x)\psi_2(y)$ and let $\phi$ be any
  function with $\int\!\phi^2=1$. Then we can write
  $\oQ_{t_i}(\varphi_k\otimes\varphi_k)=(\oQ_{t_i}\varphi_k\otimes\phi)(\phi\otimes\varphi_k)$,
  so that
  $\|\oQ_{t_i}(\varphi_k\otimes\varphi_k)\|_1\leq\|\oQ_{t_i}\varphi_k\otimes\phi\|_2\|\phi\otimes\varphi_k\|_2$
  (note that we need to consider the operators with the bars because of the remark
  following the statement of the corollary). Now slightly abusing notation to write
  $\|\cdot\|_2$ both for the Hilbert-Schmidt norm of operators in $L^2(\rr)$ and for the
  norm of this last space, we have
  \[\|\oQ_{t_i}\varphi_k\otimes\phi\|_2=\|(1-q_{t_i})\varphi_k\|_2\|\phi\|_2=\|(1-q_{t_i})\varphi_k\|_2
  \qqand\|\varphi_k\otimes\phi\|_2=\|\varphi_k\|_2\|\phi\|_2=1,\]
  so for $1\leq i,j\leq n$ we have
  \[\|\oQ_{t_i}e^{(t_j-t_i)D}\KHrm\|_1\leq\sum_{k=0}^{N-1}e^{(t_j-t_i)k}\|\oQ_{t_i}(\varphi_k\otimes\varphi_k)\|_1
  \leq\sum_{k=0}^{n-1}e^{(t_j-t_i)k}\|\bar q_{t_i}\varphi_k\|_2<\infty,\] since
  $|\varphi_k(x)|\leq cx^ke^{-x^2/2}$ and $|\bar q_{t_i}(x)|\leq ce^{\kappa x^2}$ where
  $\kappa<\tfrac1{2\sqrt{2}}\tanh(\tau/\sqrt{2})<\frac12$. Hence the only thing left to check in (ii) is that
  $\|\oQ_{t_i}e^{-(t_j-t_i)D}\|_1<\infty$ for $i<j$. To that end we use the Feynman-Kac
  representation to write (setting $t=\tfrac12(t_j-t_i)$)
  \begin{equation}
    e^{-tD}(x,y)=\tfrac1{\sqrt{2\pi
        t}}e^{-(x-y)^2/2t}\ee_{b(0)=x,\,b(t)=y}\!\left[e^{-\frac12\int_{0}^{t}(b(s)^2-1)\hs
      ds}\right],\label{eq:FK-GUE}
  \end{equation}
  where $b(s)$ denotes a standard Brownian motion and the subscript in the expectation
  means that it is conditioned (in the sense of a Brownian bridge) to go from $x$ at time
  0 to $y$ at time $t$. Then
  \begin{align*}
    \|\oQ_{t_i}e^{-tD}\|^2_2&=\int_{\rr^2}dx\,dy\,\bar q_{t_i}(x)^2\frac1{2\pi t}
    e^{-(x-y)^2/t}\ee_{b(0)=x,b(t)=y}\!\left[e^{-\frac12\int_{0}^{t}(b(s)^2-1)\hs ds}\right]^2\\
    &\leq\int_{\rr^2}dx\,dy\, ce^{2\kappa x^2}\frac1{2\pi t}
    e^{-(x-y)^2/2t}\ee_{b(0)=x,b(t)=y}\!\left[e^{-\int_{0}^{t}(b(s)^2-1)\hs ds}\right]\\
    &\leq \int_{-\infty}^\infty dx\,c\frac{e^{2\kappa x^2+t}}{\sqrt{2\pi
        t}}\ee_{b(0)=x}\!\left[e^{-\int_{0}^{t}b(s)^2\hs ds}\right]
    \leq c'\int_{-\infty}^\infty dx\,e^{2\kappa x^2-{\rm tanh}(\sqrt{2}t)x^2/\sqrt{2}}<\infty
  \end{align*}
  by our assumption on $\kappa$, where in the last inequality we have used (1.9.3) in
  \cite{handbookBM}. In the same way we have $\|e^{-tD}\|_2<\infty$ and then
  $\|\oQ_{t_i}e^{-(t_j-t_i)D}\|_1\leq\|\oQ_{t_i}e^{-tD}\|_2\|e^{-tD}\|_2<\infty$.

  Finally, Assumption \ref{assum:2}(iii) follows from Assumption \ref{assum:2}(ii) thanks
  to the observation in Remark \ref{rem:assum2}.
\end{proof}

\begin{proof}[Proof of Proposition \ref{prop:contGUE}]
  Using the notation introduced before the statement of the result, it is clear that the
  functions $q_{t_i}$ satisfy the assumptions appearing in Corollary \ref{cor:gue}, so that
  \begin{equation}\label{eq:prodcontGUE}
    \EE\!\left[ \prod_{j=1}^N\prod_{i=1}^n(1-\delta h(t_j,\lambda^N_j(t_i)))\right]
    =\det\!\big(I-\KHrm+ Q_{t_1}e^{(t_1-t_2)D}\dotsm Q_{t_n}
    e^{(t_n-t_1)D}\KHrm\big)_{L^2(\rr)}.
  \end{equation}
  The left-hand side equals
  \begin{multline}\label{eq:riemannsum}
    \ee\!\left[\prod_{j=1}^N\exp\!\left(\sum_{i=1}^n\big[\log(1-\delta
        h(t_i,\lambda^N_j(t_i)))\right)\right]
    =\ee\!\left[\prod_{j=1}^N\exp\!\left(-\delta\sum_{i=1}^nh(t_i,\lambda^N_j(t_i))+n\mathcal{O}(\delta^2)\right)\right]\\
    \xrightarrow[n\to\infty]{}\ee\!\left[\prod_{j=1}^N\exp\!\left(-\int_\ell^r\,h(t,\lambda^N_j(t))\,dt\right)\right]
  \end{multline}
  by the dominated convergence theorem.

  For the right-hand side of \eqref{eq:prodcontGUE}, writing
  $\Gamma^{h,n}_{\ell,r}=Q_{t_1}e^{(t_1-t_2)D} Q_{t_2}\dotsm e^{(t_{n-1}-t_n)D}Q_{t_n}$
  one can use the Feynman-Kac representation on each interval $[t_i,t_{i+1}]$ as in
  \eqref{eq:FK-GUE} (see also \eqref{eq:FKPs}) to deduce that $\Gamma^{h,n}_{\ell,r}$ has
  kernel
  \[\Gamma^{h,n}_{\ell,r}(x,y)
  =\tfrac1{\sqrt{2\pi(r-\ell)}}e^{-(x-y)^2/2(r-\ell)}\ee_{b(\ell)=x,\,
    b(r)=y}\left[e^{\sum_{i=0}^{n}\log(1-\delta h(t_{i},
      b(t_{i})))-\frac12\int_{\ell}^{r}(b(s)^2-1)\,ds}\right],\] where $b(s)$
  is a Brownian bridge (with diffusion coefficient 2) run from $x$ at time $\ell$ to $y$
  at time $r$. Then using \eqref{eq:FK} we deduce that
  \begin{multline*}
    \left[\Gamma^{h,n}_{\ell,r}-\Gamma^h_{\ell,r}\right]\!(x,y)
    =\tfrac1{\sqrt{2\pi(r-\ell)}}e^{-(x-y)^2/2(r-\ell)}\ee_{b(r)=x,\,b(\ell)=y}\!\left[e^{-\frac12\int_{\ell}^{r}(2h(s,b(s))+b(s)^2-1)\hs
        ds}\right.\\
    \cdot\left.\left(e^{\sum_{i=0}^{n}\log(1-\delta
          h(t_{i},b(t_i)))+\int_{\ell}^{r}h(s,b(s))\hs ds}-1\right)\right]
  \end{multline*}
  for small enough $\delta$. Since $h$ is bounded and continuous, the random variable inside the expectation goes to
  0 almost surely as $n\to\infty$ using a similar argument as in \eqref{eq:riemannsum},
  and thus since this random variable is bounded by $ce^{-\int_{\ell}^r b(s)^2\hs ds}$ the
  whole expected value goes to 0 as $n\to\infty$ by the dominated convergence theorem. If
  we now define the multiplication operator $Mf(x)=\phi(x)f(x)$ with
  $\phi(x)=(1+x^2)^{-1/2}$ then the above argument gives
  $(\Gamma^{h,n}_{\ell,r}-\Gamma^h_{\ell,r})M(x,y)\to0$ as $n\to\infty$ for all $x,y$. To deduce
  that $\|(\Gamma^{h,n}_{\ell,r}-\Gamma^h_{\ell,r})M\|_2\to0$ as $n\to\infty$ we use the
  dominated convergence theorem again together with the fact that
  $(\Gamma^{h,n}_{\ell,r}-\Gamma^h_{\ell,r})M$ satisfies
  \begin{align*}
    &\int_{\rr^2}dx\,dy\,\big[(\Gamma^{h,n}_{\ell,r}-\Gamma^h_{\ell,r})M(x,y)\big]^2 \leq
   c\int_{\rr^2}dx\,dy\,e^{-\frac{(x-y)^2}{r-\ell}}\ee_{b(\ell)=x,\,b(r)=y}\!\left[e^{-\frac12\int_{\ell}^{r}b(s)^2\,ds}\right]^2\phi(y)^2\\
    &\qquad\leq c\left[\int_{-\infty}^\infty
      dy\,\phi(y)^4\right]^{1/2}\int_{-\infty}^\infty dx\left[\int_{-\infty}^\infty
      dy\,e^{-\frac{2(x-y)^2}{r-\ell}}\ee_{b(\ell)=x,\,b(r)=y}\!\left[e^{-2\int_{\ell}^{r}b(s)^2\,ds}\right]\right]^{1/2}\\
    &\qquad\leq c\|\phi\|^2_4\int_{-\infty}^\infty dx\left[\int_{-\infty}^\infty
      dy\,e^{-\frac{(x-y)^2}{2(r-\ell)}}\ee_{b(\ell)=x,\,b(r)=y}\!\left[e^{-2\int_{\ell}^{r}b(s)^2\,ds}\right]\right]^{1/2}\\
    &\qquad\leq c\|\phi\|^2_4\int_{-\infty}^\infty dx\,\ee_{b(\ell)=x}
    \left[e^{-2\int_{\ell}^{r}\,b(s)^2\,ds}\right]^{1/2}
    =c'\|\phi\|^2_4\left[\int_{-\infty}^\infty dx\,e^{-{\rm tanh}(2(r-\ell))x^2}\right]^{1/2}<\infty,
  \end{align*}
  where we have used the Cauchy-Schwartz inequality and the last equality
  follows from (1.9.3) of \cite{handbookBM}. Checking that
  $\|M^{-1}e^{(r-\ell)D}\KHrm\|_2<\infty$ is simple as in the proof of Corollary
  \ref{cor:gue}, so from the above we deduce that
  \begin{multline*}
    \big\|\big(\KHrm-\Gamma^{h,n}_{\ell,r}e^{(r-\ell)D}\KHrm\big)-\big(\KHrm-\Gamma^h_{\ell,r}e^{(r-\ell)D}\KHrm\big)\big\|_1\\
    \leq\big\|\big(\Gamma^h_{\ell,r}-\Gamma^{h,n}_{\ell,r}\big)M\big\|_2\big\|M^{-1}e^{(r-\ell)D}\KHrm\big\|_2
    \xrightarrow[n\to\infty]{}0.
  \end{multline*}
  Since the mapping $A\mapsto\det(I+A)_{L^2(\rr)}$ is continuous in the space of trace
  class operators (see \cite{Simon}), we deduce that the right-hand side of \eqref{eq:prodcontGUE} converges to
  $\det(I-\KHrm+\Gamma^h_{\ell,r}e^{(r-\ell)D}\KHrm)_{L^2(\rr)}$, and hence
  \begin{equation*}
    \EE\!\left[\prod_{j=1}^{N}\exp\!\left( -\int_{\ell}^{r}\!h(t,\lambda^{N}_j(t))\,dt\right)\right]
    =\det\!\big(I-\KHrm+ \Gamma^{h}_{\ell,r}
    e^{(t_n-t_1)D}\KHrm\big)_{L^2(\rr)}.\label{eq:gammaHat}\qedhere
  \end{equation*}

  % To finish the proof we need to check that $\hat\Gamma^h_{\ell,r}$ can be replaced by
  % $\Gamma^h_{\ell,r}$ in \eqref{eq:gammaHat}. In order to do that observe that, by the
  % time reversal invariance of $(\lambda^N_1(t),\dotsc,\lambda^N_N(t))$, the left-hand side
  % of \eqref{eq:gammaHat} equals $\EE\!\left[\prod_{j=1}^{N}\exp\!\left( -\int_{\ell}^{r}\!\hat
  %     h(t,\lambda^{N}_j(t))\,dt\right)\right]$ with $\hat h(t,x)=h(\ell+r-t,x)$. But then the
  % right-hand side does not change if we replace $\hat\Gamma^h_{\ell,r}$ by
  % $\hat\Gamma^{\hat h}_{\ell,r}$, which is clearly the same as $\Gamma^h_{\ell,r}$.
\end{proof}

\begin{proof}[Proof of Corollary \ref{Airytwobddform}]
  Fix $f\in L^2(\rr)$ and write $\phi(x)=e^{rx}\uno{x\geq0}+\varphi(x)^{-1}\uno{x<0}$. Then
  for $i<j$ (note that as in the proof of Corollary \ref{cor:gue} we need to consider the
  operators with bars), writing $\hat f(\lambda)=\int_{-\infty}^\infty
  dx\,\Ai(x+\lambda)f(x)$ we have
  \begin{equation}
  \begin{aligned}
    \|\oQ_{t_i}&e^{(t_i-t_j)H}f\|_2^2\leq c\int_{-\infty}^\infty
    dx\!\left[\int_{\rr^2}dy\,d\lambda\,\phi(x)e^{\lambda(t_i-t_j)}\Ai(x+\lambda)\Ai(y+\lambda)f(y)\right]^2\\
    &\qquad\leq c\int_{-\infty}^\infty dx\!\left[\int_{-\infty}^\infty
      d\lambda\,\phi(x)^2e^{2\lambda(t_j-t_i)}\Ai(x+\lambda)^2\right]
    \left[\int_{-\infty}^\infty d\lambda\,\hat f(\lambda)^2\right]\\
    &\qquad=c\|f\|^2_2\int_{-\infty}^\infty dx\,\phi(x)^2e^{-2(t_j-t_i)x}\int_{-\infty}^\infty
    d\lambda\,e^{2(\lambda+1)(t_j-t_i)}\Ai(\lambda)^2\leq c'\|f\|^2_2.
  \end{aligned}\label{eq:oQetH}
\end{equation}
where in the last line we have used the Parseval identity for the Airy transform
$\int\!\hat f^2=\int\!f^2$. The fact that $c'<\infty$ follows from the assumption on $r$
and $\varphi$ and the bounds
  \begin{equation}
    |\!\Ai(x)|\leq c\,e^{-\frac23x^{3/2}}\quad\text{for $x\geq 0$}\qqand
    |\!\Ai(x)|\leq c|x|^{-1/4}\quad\text{for $x<0$}\label{eq:airyBd}
  \end{equation}
  (see (10.4.59-60) in \cite{abrSteg}). This shows that for $i<j$,
  $\oQ_{t_i}e^{-(t_j-t_i)H}$ is a bounded operator mapping $L^2(\rr)$ to itself. Similar
  computations allow to check the rest of Assumption \ref{assum:0}(i). To check (ii) we
  use the formula given in Lemma \ref{lem:alt}. Each term can be written as a
  product of the form
  $(-1)^k(e^{(t_1-t_{a_0})H}\oQ_{t_{a_0}})\dotsm(e^{(t_{a_{k-1}}-t_{a_k})H}\oQ_{t_{a_k}})
  (e^{(t_{a_k}-t_{a_1})H}\KAirytwo)$ with $1\leq a_0<\dotsm<a_k\leq n$. The $k+1$ factors
  coming after $(-1)^k$ can be checked to be bounded operators on $L^2(\rr)$ by a
  computation similar to \eqref{eq:oQetH}, and a simpler computation gives the same for
  $e^{(t_{a_k}-t_{a_1})H}\KAirytwo$ using the spectral formula for its kernel.

  As in the previous example, checking Assumption \ref{assum:1} is straightforward. For Assumption \ref{assum:2} we choose
  % \[V_{t_i}f(x)=U_{t_i}f(x)=e^{-\tau x/2}f(x),\qquad V_{t_i}'f(x)=U_{t_i}'f(x)=e^{\tau
  %   x/2}f(x).\] Condition (i) is obvious.  Now note that $\KAirytwo=B_0P_0B_0$, where
  \[V_{t_i}f(x)=U'_{t_i}f(x)=\psi(x)f(x),\qquad
  V_{t_i}'f(x)=U_{t_i}f(x)=\psi(x)^{-1}f(x)\] with
  $\psi(x)=e^{-rx/2}\uno{x\geq0}+\varphi(x)^{1/2}\uno{x<0}$. Condition (i) is obvious.  Now
  note that $\KAirytwo=B_0P_0B_0$, where $B_0(x,\lambda)=\Ai(x+\lambda)$ and $P_af(x)=f(x)\uno{x\geq
    a}$, so
  \begin{equation}
    \|V_{t_i}\oQ_{t_i}\KAirytwo V_{t_i}'\|_1\leq\|V_{t_i}\oQ_{t_i}B_0P_0\|_2\|P_0B_0V_{t_i}'\|_2.\label{eq:VtoQ}
  \end{equation}
  We have
  \begin{align*}
    \|V_{t_i}\oQ_{t_i}B_0P_0\|_2^2&\leq c\int_{-\infty}^\infty dx\int_0^\infty
    d\lambda\,\psi(x)^2\phi(x)^{2}\Ai(x+\lambda)^2\\
    &=c\int_0^\infty dx\int_0^\infty
    d\lambda\,e^{rx}\Ai(x+\lambda)^2+c\int_{-\infty}^0 dx\,\varphi(x)^{-1}\int_{-x}^\infty
    d\lambda\Ai(\lambda)^2.
  \end{align*}
  The first integral on the right-hand side is clearly finite by \eqref{eq:airyBd}. For
  the second one, note that by \eqref{eq:airyBd} $\int_{-x}^\infty
  d\lambda\Ai(\lambda)^2\leq c(1+|x|^{1/2})$, so the integral is finite by the assumption on
  $\varphi$, which by the Cauchy-Schwartz inequality implies
  $\int_{-\infty}^0dx\,|x|^{1/2}\varphi(x)^{-1}<\infty$. $\|P_0B_0V_{t_i}'\|_2<\infty$
  follows from the exact same calculation, and hence from \eqref{eq:VtoQ} we get that
  $V_{t_i}\oQ_{t_i}\KAirytwo V_{t_i}'$ is trace class. The same proof shows that
  $V_{t_i}\oQ_{t_i}e^{(t_j-t_i)H}\KAirytwo V_{t_i}'$ is trace class for $i<j$. To check
  that $V_{t_i}\oQ_{t_i}e^{-tH}V_{t_j}'$ is trace class for $i<j$ and $t=t_j-t_i>0$ we
  start by writing
  \[\|V_{t_i}\oQ_{t_i}e^{-tH}V_{t_j}'\|_2\leq\|V_{t_i}\oQ_{t_i}e^{-tH/2}\|_2\|e^{-tH/2}V_{t_j}'\|_2.\]
  For the first factor we use again the explicit formula for the kernel of $e^{-tH}$ to
  obtain
  \begin{multline}\label{eq:tcA2}
      \|V_{t_i}\oQ_{t_i}e^{-tH/2}\|_2^2=\int_{\rr^4}dx\,dy\,d\lambda\,d\sigma\,\psi(x)^2\phi(x)^2
      e^{t(\lambda+\sigma)/2}\!
      \Ai(x+\lambda)\!\Ai(y+\lambda)\!\Ai(x+\sigma)\!\Ai(y+\sigma)\\
      =\int_{-\infty}^\infty dx\int_{-\infty}^\infty d\lambda\,\psi(x)^2\phi(x)^2
        e^{t\lambda}\!\Ai(x+\lambda)^2 =\int_{-\infty}^\infty
      dx\,\psi(x)^2\phi(x)^2e^{-tx}\int_{-\infty}^\infty d\lambda\,e^{t\lambda}\!\Ai(\lambda)^2
  \end{multline}
  which is finite by the similar arguments as above. $\|e^{-tH/2}V_{t_i}'\|_2$ can
  be bounded in the same manner, and we deduce that $V_{t_i}\oQ_{t_i}e^{-tH}V_{t_j}'$ is
  trace class. The same proof works for  $V_{t_i}\oQ_{t_i}e^{-tH}\KAirytwo V_{t_j}'$.

  To get (iii) we use Lemma \ref{lem:alt} and rewrite each term in the sum as
  \begin{multline*}
    (-1)^k(U_{t_i}e^{(t_i-t_{a_0})H}\oQ_{t_{a_0}}U_{t_{a_0}}')(U_{t_{a_0}}e^{(t_{a_0}-t_{a_1})H}\oQ_{t_{a_1}}U_{t_{a_1}}')\dotsm
    (U_{t_{a_{k-1}}}'e^{(t_{a_{k-1}}-t_{a_k})H}\oQ_{t_{a_{k}}}U_{t_{a_k}}')\\\cdot(U_{t_{a_k}}e^{(t_{a_k}-t_{a_1})H}\KAirytwo U_{t_{a_1}}').
  \end{multline*}
  Since $U_{t_i}=V_{t_i}'$ and $U_{t_i}'=V_{t_i}$, each factor above corresponds to the
  adjoint of one of the factors appearing in (ii). Since the adjoint of a trace class
  operator is also trace class, we deduce that the whole product is trace class.
\end{proof}

\begin{proof}[Proof of Corollary \ref{cor:2to1}]
  We already indicated how to check Assumption \ref{assum:1}. One checks directly that the
  first three operators in Assumption \ref{assum:0}(i) are bounded operators preserving
  $L^2(\rr)$, while the last one can be checked using \eqref{eqtildeKCompleteInfinity} and
  arguing about the Airy functions appearing there similarly as in the previous proof.
  Assumption \ref{assum:0}(ii) follows similarly using Lemma \ref{lem:alt}. Assumption
  \ref{assum:2} can be checked following the same ideas as in the proof of Corollary
  \ref{Airytwobddform} and using the arguments in Appendix A of \cite{bfp} to provide the
  necessary analytical estimates.
\end{proof}

\end{document}